\documentclass[11pt,a4paper,twoside,leqno]{amsart}

\usepackage[a4paper,textwidth=16cm,textheight=21cm,centering]{geometry}
\usepackage[colorlinks=true,citecolor=red,linkcolor=blue,urlcolor=blue,hypertexnames=false]{hyperref}
\usepackage{amssymb}
\usepackage{amsmath}
\usepackage{mathrsfs}
\usepackage{amsthm}
\usepackage{amsfonts}
\usepackage{mathtools}
\usepackage[T1]{fontenc}
\usepackage{newunicodechar}
\newunicodechar{‑}{-}
\IfFileExists{newtxtext.sty}{%
  \usepackage{newtxtext}
  \usepackage{newtxmath}
}{\usepackage{mathptmx}}
\usepackage{enumerate}
\usepackage{enumitem}
\usepackage{latexsym}
\usepackage{color}
\usepackage{graphicx}
\usepackage{microtype}
\usepackage{indentfirst}
\usepackage{tikz}
\usepackage{float}
\usepackage{booktabs}
\usepackage{tabularx}
\usepackage{array}
\usepackage{float}
\usetikzlibrary{
	arrows.meta,
	calc,
	angles,
	quotes,
	positioning,
	decorations.pathreplacing
}
\hypersetup{colorlinks=true,citecolor=red,linkcolor=blue,urlcolor=blue,pdfborder={0 0 0}}
\makeatletter
\def\subsection{\@startsection{subsection}{2}%
	\z@{.7\linespacing\@plus.2\linespacing}%
	{.5\linespacing}%
	{\normalfont\bfseries}}
\makeatother
\allowdisplaybreaks
\newtheoremstyle{boldplain}%
{3pt}{3pt}{\itshape}{}%
{\bfseries\upshape}{.}{0.5em}%
{\thmname{\bfseries #1}\thmnumber{\bfseries\ #2}%
 \thmnote{\bfseries\ (#3)}}

\newtheoremstyle{bolddefinition}%
{3pt}{3pt}{\normalfont}{}%
{\bfseries\upshape}{.}{0.5em}%
{\thmname{\bfseries #1}\thmnumber{\bfseries\ #2}%
 \thmnote{\bfseries\ (#3)}}

\theoremstyle{boldplain}
\newtheorem{theorem}{Theorem}[section]

\newtheorem{lemma}[theorem]{Lemma}

\theoremstyle{bolddefinition}

\numberwithin{equation}{section}
\newtagform{blue}[\textcolor{blue}]{\textcolor{blue}{(}}{\textcolor{blue}{)}}
\usetagform{blue}

\newcommand{\R}{\mathbb R}
\newcommand{\Sph}{\mathbb S}
\newcommand{\sig}{\sigma}
\newcommand{\1}{\mathbf 1}

\newcommand{\abs}[1]{\left|#1\right|}
\newcommand{\norm}[1]{\left\|#1\right\|}

\newcommand{\dist}{\operatorname{dist}}
\newcommand{\supp}{\operatorname{supp}}

\usepackage{needspace}
\hypersetup{
  pdftitle={Weak type (1,1) bounds for rough oscillatory singular integrals},
  pdfauthor={Yanping Chen and Zhengyang Ji},
  pdfsubject={Harmonic analysis},
  pdfkeywords={oscillatory singular integrals, rough kernels, weak type (1,1), polynomial phases}
}
\newcommand{\Z}{\mathbb Z}
\newcommand{\one}{\mathbf1}
\newcommand{\pv}{\operatorname{p.v.}}
\newcommand{\coef}{\mathrm{coef}}
\newcommand{\nc}{\mathrm{nc}}
\newcommand{\weak}[1]{\left\|#1\right\|_{L^{1,\infty}}}
\newcommand{\HH}{\mathcal H}
\theoremstyle{boldplain}

\newcommand{\submissionauthorinfo}{%
  \par\vspace{0.2\baselineskip}%
  \begin{center}
    \footnotesize
    \begin{minipage}{0.94\textwidth}
      \centering
      \textsuperscript{1}Department of Mathematics, Northeastern University,
      Shenyang 110004, China\\[0.3em]
    \end{minipage}
  \end{center}%
}
\makeatletter
\g@addto@macro\@setauthors{\submissionauthorinfo}
\let\@setaddresses\@empty
\makeatother

\begin{document}
\arraycolsep=1pt

\title{Weak type $(1,1)$ bounds for rough oscillatory singular integrals}

\author[Yanping Chen]{Yanping Chen$^{1,\ast}$[\href{https://orcid.org/0000-0002-8879-7776}{0000-0002-8879-7776}]}

\email{yanpingch@126.com}

\author[Zhengyang Ji]{Zhengyang Ji$^1$[\href{https://orcid.org/0009-0009-0505-7480}{0009-0009-0505-7480}]}
\address{Department of Mathematics,
	Northeastern University,
	Shenyang 110004, China}
\email{zhengyang.ji.zjut@gmail.com}

\thanks{\textsuperscript{$\ast$}Corresponding author: Yanping Chen. Email: \href{mailto:yanpingch@126.com}{\nolinkurl{yanpingch@126.com}}.}
\thanks{The research was partially supported by the National Natural Science Foundation of China (Grant Nos. 12525105 and 12371092).}
\thanks{The authors sincerely thank Professor Jiecheng Chen, Professor Yong Ding, Professor Shanzhen Lu, Professor Huoxiong Wu and Professor Dunyan Yan, who patiently endured my elementary questions regarding the rough oscillatory singular integral operators.}
\date{October 9, 2026}

\subjclass[2010]{42B20, 42B25}
\keywords{oscillatory singular integrals; rough kernels; weak type $(1,1)$ bounds; polynomial phases}

\begin{abstract}
In their seminal work, F. Ricci and E. M. Stein[Proc. Nat. Acad. Sci. USA 83, 1--3 (1986)] introduced the oscillatory singular integral
 \[
T_Pf(x)=\pv\int_{\R^n}e^{iP(x,y)}\frac{\Omega(x-y)}{|x-y|^n}f(y)\,dy,
\]
 and S. Chanillo and M. Christ [Duke Math. J. 55, 141--155 (1987)] established its weak-type \((1,1)\) inequality for  smooth  $\Omega$. For the rough variant, however, the weak‑type \((1,1)\) inequality remained a long-standing open question. In the present paper, we settle this problem: specifically, we 
prove that the rough oscillatory singular integral is of weak type
$(1,1)$ with $\Omega\in L\log^+\!L(\mathbb{S}^{n-1})$. Moreover, we establish weak‑type \((1,1)\) bounds for the rough maximal oscillatory operator associated with \(T_P\), a result of independent interest.
\end{abstract}

\maketitle

\renewcommand{\contentsname}{Contents}
\setcounter{tocdepth}{1}
\tableofcontents

\section{Introduction}\label{sec:intro}

Let \(\mathbb{R}^n\) be the $n$-dimensional Euclidean space with \(n\ge 2\), and let
\(\mathbb{S}^{n-1}=\{x\in\mathbb{R}^n: |x|=1\}\)
be the unit sphere in \(\mathbb{R}^n\). Suppose $\Omega$ is homogeneous of degree zero on $\mathbb{R}^n\setminus\{0\}$, i.e.
\begin{align}\label{eq:homogeneous}
	\Omega(tx)=\Omega(x) ~~\text{for~any}~t>0 ~\text{and}~ x \in \mathbb{R}^{n} \backslash\{0\},
\end{align}
and satisfies the cancellation condition
\begin{align}\label{eq:mean}
	\int_{\mathbb{S}^{n - 1}}\Omega(x')d \sigma(x') = 0.
\end{align}
\smallskip

Oscillatory integrals occupy a central position in harmonic analysis, featuring prominently in Fourier restriction theory, multiplier operators, square-function and decoupling theory, singular integrals, maximal operators, and time-frequency analysis. They provide an indispensable tool for investigating geometric properties rooted in curvature and orthogonality, exploiting oscillatory cancellation to translate the curvature, degeneracy order, and transversality of phases into decay estimates, operator boundedness, and regularity gains. Such estimates further forge connections with dispersive equations, spectral theory, fractal distance sets, and exponential-sum problems in number theory (see\cite{Bo, BG, Car, CKS1986, Fefferman,  FS, Guth, Hormander, KPV,  MVV, Sch, Sch0, Sogge,SW2001} ). Many important operators in harmonic analysis are some versions of oscillatory integrals, such as the Fourier transform, Bochner-Riesz means, Radon transform and so on.
\smallskip
\smallskip

F. Ricci and E. M. Stein \cite{RS1986, RS1987} introduced the following oscillatory singular integral, initially defined for smooth, compactly supported functions $f$:
\begin{align}\label{eq:orig11}
	T_Pf(x) = \text { p.v. } \int_{\mathbb{R}^n} e^{iP(x,y)} \frac{\Omega(x-y)}{|x-y|^n}f(y) dy,
\end{align}
where $P(x,y)$ is a real-valued polynomial on
$\mathbb{R}^n\times\mathbb{R}^n$.
This operator was also discussed in E.~M.~Stein's address
at the 1986 International Congress of Mathematicians,
in which he presented $L^p$ bounds for $1<p<\infty$;
see \cite[Section~8, Corollary~3]{Stein1987ICM}. It was precisely in the analysis of operators of that type that the class of
oscillatory singular integrals treated here arose; see \cite{ IKM2025, KL2018, KL2020, LS2017, Pan1991, PS1986, PS1998, RS1986,  Sato2000, Lie2009} and the references
listed there. 
\smallskip

By a rough integral, we mean that no regularity is assumed for the kernel \(\Omega\), only its size. In recent years a variety of rough operators related to the singular integrals for Calder\'{o}n and Zygmund
have been studied and  their boundedness properties have played an
important role in the subsequent development of harmonic analysis; see, for example, \cite{CDHL, CDH,  DR, CG, FS, FCS,GS, GHS, GHP, HLTX, HRT, Ho, SS, SS1, S1, ST}.
Using the method of rotations of Calder\'{o}n and Zygmund \cite{CZ1956} together with van der Corput's lemma, one can show that the rough oscillatory singular integral $T_P$ extends
to a bounded operator on $L^p(\mathbb{R}^n)$, for $1<p<\infty$, provided $\Omega\in L\log^+\!L(\mathbb{S}^{n-1})$ (see \cite{LZ1992, JL1995, FP1997, Ojanen2000}).
One might therefore hope that it is also of weak type $(1,1)$. However, the \(L^1\)-theory is far more difficult.

\smallskip
E. M. Stein \cite{Stein1993,Stein1987ICM} identified three central classes of operators in modern harmonic analysis:
	\begin{enumerate}
	\item {\emph{maximal averages}}; \item{\emph{singular integrals;}} \item { \emph{oscillatory integrals}}.\end{enumerate} For the first two classes, the weak-type \((1,1)\) boundedness of their rough variants was established under the $L\log^+\!L$ assumption (see  \cite{CR1988}, \cite{Seeger1996}).
However, the weak type (1,1) boundedness of rough oscillatory singular integral operators $T_P$ has remained an open problem
since then, even for $\Omega \in L^\infty (\mathbb{S}^{n-1})$. This longstanding problem was explicitly stated in \cite{DL2019}.
The long-standing open problem has remained unresolved mainly due to
three difficulties:\begin{enumerate}
	\item\emph{ the oscillatory nature}; \item\emph{the roughness of the kernel;} \item
\emph{the non-convolutional structure}.\end{enumerate}
 Even for the oscillatory integrals \(T_P\) with smooth kernel \(\Omega\), establishing their weak-type \((1,1)\) boundedness remains highly nontrivial. In 1987, Chanillo and Christ \cite{CC1987} employed a modified geometric argument and the van der Corput lemma to prove weak-type \((1,1)\) inequality for singular integral operators  \(T_P\) with sufficiently smooth kernel \(\Omega \in C^1(\Bbb S^{n-1})\).

\smallskip
The preceding results are summarized in Table~\ref{tab:weak11-rough}.
\begin{table}[H]
	\centering
	\caption{Progress on weak type $(1,1)$ estimates for rough operators.}
	\label{tab:weak11-rough}
	\renewcommand{\arraystretch}{1}
	\begin{tabularx}{0.95\textwidth}{
			>{\centering\arraybackslash}p{0.25\textwidth}
			>{\raggedright\arraybackslash}X}
		\toprule
		\textbf{Rough operators}
		&
		\centering\arraybackslash
		\textbf{Weak type $(1,1)$ results}
		\\
		\midrule
		
		\textbf{Rough maximal averages}
		&
		Christ~\cite{Christ1988} (1988):
		$n=2$, $\Omega\in L^q(\mathbb S^1)$, $q>1$;
		\newline
		Christ--Rubio de Francia~\cite{CR1988} (1988):
		$n\ge2$, $\Omega\in L\log^+\! L(\mathbb S^{n-1})$.
		\\
		
		\midrule
		
		\textbf{Rough singular integrals}
		&
		Hofmann~\cite{Hofmann1989} (1988):
		$n=2$, $\Omega\in L^q(\mathbb S^1)$, $q>1$;
		\newline
		Christ--Rubio de Francia~\cite{CR1988} (1988):
		$n=2$, $\Omega\in L\log^+\! L(\mathbb S^1)$;
		\newline
		Seeger~\cite{Seeger1996} (1996):
		$n\ge2$, $\Omega\in L\log^+\! L(\mathbb S^{n-1})$;
		\newline
		Tao~\cite{Tao1999} (1999):
		$\Omega\in L\log^+\! L$ on arbitrary homogeneous groups.
		\\
		
		\midrule
		
		\textbf{Rough oscillatory integrals}
		&
		
		$\Omega\in L^\infty (\mathbb S^{n-1})$?
		
		\\
		
		\bottomrule
	\end{tabularx}
\end{table}
In this paper, we resolve this open problem. Our main result can be stated as follows.

\begin{theorem}\label{thm:main}
	Let $\Omega \in L\log^{+}\!\!L(\mathbb{S}^{n-1})$ and satisfy \eqref{eq:homogeneous} and \eqref{eq:mean}. Suppose that $P(x,y)$ is a real valued polynomial defined on $\mathbb{R}^n\times \mathbb{R}^n$. Let the operator $T_P$ be defined as
	\begin{align*}
		T_Pf(x) = \hbox { p.v. } \int_{\mathbb{R}^n} e^{iP(x,y)}\frac{ \Omega(x-y)}{|x-y|^n}f(y) dy.
	\end{align*}
	Then for any $\lambda >0$, we have
	\begin{equation}\label{eq:target}
		\lambda |\{x\in\mathbb{R}^n:|T_P f(x)|>\lambda\}|
		\le C (1+\|\Omega\|_{L\log^+\!\!L})\|f\|_1,
	\end{equation}
	where $C$ depends only on the total degree of $P$ and $n$, but not on the coefficients of $P.$
\end{theorem}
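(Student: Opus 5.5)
We argue by induction on the total degree of $P$, following the scheme of Chanillo--Christ \cite{CC1987}, and replace their use of the smoothness of $\Omega$ by Seeger's microlocal decomposition \cite{Seeger1996} of rough kernels.

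\textbf{Reductions.} First write $P(x,y)=\sum_{\alpha,\beta}a_{\alpha\beta}x^\alpha y^\beta$. Terms depending only on $x$ or only on $y$ can be absorbed into modulations of $f$ and of $T_Pf$, so we may assume every monomial is mixed. The base case, where no mixed term survives, is then Seeger's theorem \cite{Seeger1996} for $\Omega\in L\log^+\!L$. For the inductive step, let $d$ be the largest degree in $x$ of a mixed monomial. After dilation invariance (the constant must not depend on the coefficients) and translation, we normalize the top homogeneous part $\sum_{|\alpha|=d}a_{\alpha\beta}x^\alpha y^\beta$ so that its coefficient norm equals $1$.

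\textbf{Local and global parts.} Split the kernel as $K=K_0+K_\infty$ with $K_0=K\,\1_{|x-y|\le 1}$.
\begin{itemize}
\item On $|x-y|\le1$, Taylor-expand $P$ about the center $x_0$ of a unit cube. The top-degree mixed terms then differ from a polynomial of lower degree (after modulations) by $O(|x-y|)$. The error kernel $|e^{i\cdot}-1|\,|\Omega(x-y)|/|x-y|^n\lesssim|\Omega(x-y)|\,|x-y|^{1-n}$ on the unit ball is integrable, with norm $\lesssim\|\Omega\|_1$. Hence $T_{P}K_0$ equals the truncation of $T_{P'}$ with $\deg P'<\deg P$ plus an $L^1$-bounded piece. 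Truncations of $T_{P'}$ are controlled by the inductive hypothesis together with Seeger's estimates for truncated rough kernels.
\item On $|x-y|>1$, we use a Calder\'on--Zygmund decomposition of $f$ at height $\lambda$: $f=g+\sum_Q b_Q$. The good part is handled by the known $L^2$ bound for $T_P$ with $\Omega\in L\log^+\!L$ \cite{LZ1992,FP1997}. Bad cubes of side $\ge1$ have total mass that is controlled, and for them we use the $L^2$ operator bound of $T_{P}K_\infty$ acting on $\sum_{\ell(Q)\ge1}b_Q$ after averaging. This is the Chanillo--Christ device: replace $b_Q$ by $b_Q-(\text{its average})$, which is valid only for small cubes, and for large cubes exploit the oscillation through a $TT^*$ argument.
\end{itemize}

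\textbf{Core estimate (main obstacle).} For large cubes $Q$ (side $2^{j}\ge1$) and dyadic kernel pieces $K_k=K\,\1_{|x-y|\sim 2^k}$ with $k\ge j$, we need
\[
\Bigl\|\sum_{Q:\ell(Q)=2^j}T_{P}K_k b_Q\Bigr\|_2^2\lesssim 2^{-\epsilon(k-j)}\lambda\sum_Q\|b_Q\|_1 .
\]
To prove this we compute $T^*T$. The cross kernel between $y,y'\in Q$ carries a phase $P(x,y)-P(x,y')$, whose $x$-derivatives are of size $\gtrsim 2^{(d-1)k}|y-y'|$ on the normalized scale. We then apply van der Corput / the Stein--Wainger operator van der Corput lemma in $x$, but with a rough amplitude $\Omega$. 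To do this, first apply Seeger's angular decomposition of $\Omega$ into pieces localized to caps of aperture $2^{-\delta(k-j)}$, together with the $L\log L$ splitting $\Omega=\sum_s\Omega_s$ with $\|\Omega_s\|_\infty\le 2^s$. On each cap the amplitude is essentially constant along the radial direction, so one-dimensional van der Corput along lines yields decay $(2^{dk}\cdot 2^{-\delta k}\,|y-y'|)^{-1/d}$. Summing over $s$ with at most $\log$ losses gives the constant $1+\|\Omega\|_{L\log^+\!L}$. Balancing the cap aperture against the van der Corput gain, uniformly in the lower-order coefficients, is the step we expect to be hardest. The cases where the top-degree part is degenerate along the cap direction need a sublevel-set estimate that is uniform over polynomials of bounded degree.

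Finally, summing the geometric decay in $k-j$, and in $s$ weighted by $s\,2^s|\{\,|\Omega|\sim2^s\}|$, gives \eqref{eq:target}.
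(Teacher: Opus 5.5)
There is a genuine gap in your core $T^*T$ estimate. (Your outer reductions are broadly those of the paper. However, you should decompose at height $\lambda/(1+\|\Omega\|_{L\log^+L})$, not $\lambda$; otherwise the good part gives a constant quadratic rather than linear in $1+\|\Omega\|_{L\log^+L}$.)

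The correlation amplitude is $\Omega(x-y)\overline{\Omega(x-y')}$. Localizing $\Omega$ to caps of aperture $2^{-\delta(k-j)}$ does not make it ``essentially constant'': on a cap, $\Omega$ is still an arbitrary bounded function. Replacing it by cap averages leaves an error that is not small for rough $\Omega$; it is small only in a Fourier-localized sense, which Seeger exploits for convolution operators. Worse, the two factors are constant along rays from two different centers. So along every line in $x$ other than the line through $y$ and $y'$, at least one factor varies with no regularity, and scalar van der Corput (which needs $\|a\|_\infty+\|a'\|_1$) cannot be applied.

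An operator-type bound is the right tool, but only after a change of variables that separates the two rough factors. This is the missing idea. The paper uses the two-center coordinates $x=z+r(pe+t\theta)$, $u=p/t$, $v=(p-H)/(Lt)$, in which $(x-z)/|x-z|$ depends only on $(u,\theta)$ and $(x-y)/|x-y|$ only on $(v,\theta)$. For fixed $\theta$ the integral becomes a pairing of one rough factor with a one-dimensional oscillatory operator (smooth amplitude) applied to the other. Both rough factors are taken in $L^2$ and are never differentiated.

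The lower bound for $\partial_u\partial_v\Phi_\theta$ then comes from the identity $\partial_u\partial_v\Phi_\theta=-(Lt^2/H^2)(E_H+1)E_0q_\theta$, combined with Carbery--Wright sublevel estimates in $(p,t)$, in the section direction $\theta$, and in $y$ for $q(y)-q(z)$. Three further problems in your version:
\begin{enumerate}
\item Your pointwise bound $\gtrsim2^{(d-1)k}|y-y'|$ is false, since $q_\alpha(y)-q_\alpha(y')$ can vanish for $y\ne y'$.
\item The $y$-sublevel set must be thickened by every bad cube meeting it, because the Calder\'on--Zygmund mass bounds hold only on whole cubes.
\item Your estimate treats a single pair $(j,k)$. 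For fixed $k-j$ you must sum infinitely many $j$ inside the $L^2$ norm, which requires correlation bounds between two different annular scales (the two-center lemma with $R\ge r$).
\end{enumerate}

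The small-cube part has a second gap. Subtracting averages as in Chanillo--Christ uses H\"ormander regularity of the kernel, which rough $\Omega$ lacks. Seeger's microlocal estimates replace that regularity only for convolution kernels, and their remainder part needs exactly mean-zero inputs. The mixed phase breaks the convolution structure: a $Q$-dependent factor such as $e^{iP(x,c_Q)}$ cannot be pulled through Seeger's directional multipliers.

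For annular scales $2^j$ with $1\le j\le\varrho s$, which are too small to give oscillatory decay in the scale gap $s$, the paper proceeds as follows.
\begin{enumerate}
\item It localizes to boxes of side $2^j$ and keeps the pure phases as unimodular factors.
\item It Fourier-expands only the mixed part $\Pi_h$, with coefficient sum $\lesssim(1+\mathfrak M_h)^{N_0}$.
\item It uses the \emph{phase-adapted} cancellation $\int_Qe^{iP(c_Q,y)}b_Q=0$ to show that the modulated pieces have means $O(2^{-s}(|\gamma_2|+\mathfrak M_h)\|b_Q\|_1)$. It subtracts these means and only then applies Seeger's lemma. Ordinary mean zero would not suffice here, since the absorbed phase $P(x_h,\cdot)$ has coefficients unbounded in $x_h$.
\item When $\mathfrak M_h>2^{\sigma s}$, it switches instead to the equal-scale two-center estimate.
\end{enumerate}

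Finally, the $L\log L$ bookkeeping must tie the truncation level to the scale gap, $\Omega_s=\Omega\1_{\{|\Omega|\le2^{\eta_0s}\|\Omega\|_1\}}$, with $\Omega-\Omega_s$ handled in $L^1$. Summing quadratic $T^*T$ bounds over level sets of $\Omega$ does not by itself produce the factor $1+\|\Omega\|_{L\log^+L}$.
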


When $P (x,y) = 0$, $T_P$ is the singular integral operator with homogeneous kernel $\Omega$ introduced by Calder\'on and Zygmund \cite{CZ1956} in 1956
\[
T_{\Omega} f(x) = \mathrm{p.v.}\int \frac{\Omega(x-y)}{|x-y|^n} f(y)\,dy
\]
and established its $L^p$-boundedness for $1<p<\infty$ under the assumptions that either $\Omega$ is odd and $\Omega\in L^1(\mathbb{S}^{n-1})$, or $\Omega$ is even and $\Omega\in L\log^+\!L(\mathbb{S}^{n-1})$ by the method of rotations. However, the rough-operator \(L^1\) theory  is considerably harder.
It was only in 1988 that Christ and Rubio de Francia \cite{CR1988} established the weak‑type \((1,1)\) inequality for this operator in two dimensions, assuming \(\Omega\in L\log^+\!L(\mathbb{S}^1)\). Hofmann \cite{Hofmann1989} independently established this result for \(\Omega\in L^q(\mathbb{S}^1)\), where \(1<q\le\infty\). Later, in 1996, Seeger \cite{Seeger1996} employed microlocal decomposition to establish weak-type \((1,1)\) inequality in all dimensions when \(\Omega\in L\log^+\!L(\mathbb{S}^{n-1})\). In 1999, Tao \cite{Tao1999} developed the \(TT^*\) method and established the weak-type \((1,1)\) inequality for rough singular integral operators on homogeneous groups.
In \cite{Christ1988} and \cite{CR1988}, a weak-type $(1,1)$ inequality was also established for the less singular maximal operator
\[
M_{\Omega} f(x)=\sup_{r>0} r^{-n}\int_{|y|\le r}\left|\Omega\!\left(\frac{y}{|y|}\right)\right|\,|f(x-y)|\,dy
\]
in all dimensions, once more under the assumption that $\Omega\in L\log^+\!L(\mathbb{S}^{n-1})$.

\smallskip

Nevertheless, several fundamental obstructions  prevent one from studying  rough oscillatory singular integrals.

\smallskip

\begin{enumerate}
	\item
	Although Christ-Rubio de Francia \cite{CR1988} and Seeger \cite{Seeger1996}  proved the weak $(1,1)$ boundedness of
	rough maximal operators and rough singular integral operators,
	respectively, substantial difficulties arise when adapting their arguments to rough oscillatory singular integrals.
	These difficulties arise because differentiating the \emph{oscillatory
	factor}  produces terms that grow with the scale and may lead to divergence in the high-scale part.
	\smallskip

	\item
	Moreover, weak \(L^1\) is \emph{not a normed linear space}, so the method of rotations is no longer applicable, and one cannot isolate the kernel \(\Omega\) via polar coordinates. The most difficult scenario is the following: the van der Corput lemma—the primary tool for oscillatory integrals—imposes certain regularity requirements on the amplitude, which the rough kernel \(\Omega\) fails to satisfy. By contrast, the kernel in the setting of Chanillo and Christ \cite{CC1987} is sufficiently smooth, whence the van der Corput lemma applies directly.
	\smallskip

	\item
	Finally, the mixed terms of $P(x,y)$ couple the variables $x$ and $y$
	and cannot be eliminated by multiplication with factors depending on
	a single variable. The resulting \emph{nonconvolutional structure}
	prevents the direct application of the estimates for rough
	convolution singular integrals.
\end{enumerate}

Accordingly, new methods and techniques are needed to overcome these  obstructions. In this paper, we develop \emph{two  new methods} and may prove useful in other problems.
\begin{enumerate}
\item The first method, \emph{two-center rough oscillatory estimates}, yields decay from oscillation while avoiding differentiation of the rough kernel factors. We construct coordinates adapted simultaneously to the two centers so that the two rough kernel factors depend on separate variables. This reduces the correlation integral to a pairing with a one-dimensional oscillatory operator. We then relate the mixed derivative of the transformed phase to the coefficients of the original polynomial phase. In regions where this derivative is small, we bound the contribution via polynomial sub-level set estimates. On the remaining domains, a $TT^*$ argument combined with van der Corput's lemma produces oscillatory decay, again without differentiating the rough kernel factors.
\smallskip

\item The second method, \emph{Fourier expansion of the mixed phase}, decomposes the exponential of the mixed phase into a sum of products of single‑variable functions and, combined with a suitable mean‑value correction, allows us to invoke convolution estimates.

\end{enumerate}
 \smallskip

Finally, we shall get a similar result for the maximal operator associated with $T_P.$
Additional new arguments are required to address maximal truncations.
\begin{theorem}\label{thm1}
	Let $\Omega \in L\log^{+}\!\!L(\mathbb{S}^{n-1})$ and satisfy \eqref{eq:homogeneous} and \eqref{eq:mean}. Suppose that $P(x,y)$ is a real valued polynomial defined on $\mathbb{R}^n\times \mathbb{R}^n$. Let the operator $T_P^*$ be defined as
	\begin{align*}
		T_P^*f(x) =\sup_{\varepsilon>0} \bigg|\int_{|x-y|>\varepsilon} e^{iP(x,y)}\frac{ \Omega(x-y)}{|x-y|^n}f(y) dy\bigg|.
	\end{align*}
	Then for any $\lambda >0$, we have
	\begin{equation}\label{eq:max-target}
		\lambda |\{x\in\mathbb{R}^n:|T_P^* f(x)|>\lambda\}|
		\le C (1+\|\Omega\|_{L\log^+\!\!L})\|f\|_1,
	\end{equation}
	where $C$ depends only on the total degree of $P$ and $n$, but not on the coefficients of $P.$
\end{theorem}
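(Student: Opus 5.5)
We argue by induction on the degree of $P$ in the $x$ and $y$ variables, following the scheme of Ricci--Stein and Chanillo--Christ, and reduce the maximal operator to Theorem~\ref{thm:main} plus maximal estimates for rough convolution operators.

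\textbf{Base case and reductions.} First we reduce to phases with no pure-$x$ or pure-$y$ terms, since such terms can be absorbed into $f$ or into a unimodular factor in $x$; neither changes $T_P^*$. We may also normalize $\|f\|_1=1$. If $P$ has no mixed terms, then $T_P^*f$ equals $T_\Omega^* g$ with $|g|=|f|$, where $T_\Omega^*$ is the rough maximal singular integral. Its weak-type $(1,1)$ bound under $\Omega\in L\log^+\!L$ follows from Seeger's argument combined with the Christ--Rubio de Francia bound for $M_\Omega$. For the inductive step, write $P$ as a sum of lower-degree terms plus a top mixed part $\sum_{|\alpha|=k,|\beta|\ge 1} a_{\alpha\beta}x^\alpha y^\beta$. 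By dilation invariance (rescaling $f$ and $\lambda$ leaves \eqref{eq:max-target} unchanged), we normalize $\sum|a_{\alpha\beta}|=1$.

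\textbf{Local/global splitting.} Split the kernel as $K=K\one_{|x-y|\le 1}+K\one_{|x-y|>1}$.

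\emph{Local part.} Take a Whitney-type cover of $\R^n$ by unit cubes $Q$. For $x,y$ near $Q$ with center $x_Q$, expand about $x_Q$: $P(x,y)=P(x_Q\!+\!(x-x_Q),\,x_Q\!+\!(y-x_Q))$. The top-degree mixed part then becomes a polynomial of lower degree in the differences, modulo terms that are bounded on the unit scale. Those bounded terms can be replaced by $0$ at the cost of
\[
\sup_{\varepsilon}\int_{\varepsilon<|x-y|\le1}\frac{|\Omega(x-y)|}{|x-y|^{n}}\,|x-y|\,|f(y)|\,dy\lesssim M_\Omega f(x),
\]
which is weak $(1,1)$ by Christ--Rubio de Francia. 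The remaining truncations are then controlled by the induction hypothesis applied to the lower-degree phase, localized to $Q^*$ and summed over $Q$.

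\emph{Global part.} Write $\sup_{\varepsilon>1}\bigl|\int_{|x-y|>\varepsilon}\cdots\bigr|\le \sum_{j\ge0}\bigl|T_jf(x)\bigr|$ plus a partial-annulus term, where $T_j$ has kernel $e^{iP}K\one_{2^j<|x-y|\le 2^{j+1}}$. The partial annulus is again bounded by $M_\Omega f$. For $\sum_j|T_j f|$, perform a Calder\'on--Zygmund decomposition $f=g+\sum b_Q$ at height $\lambda$. The good part is handled by the $L^2$ decay $\|T_j\|_{L^2\to L^2}\lesssim 2^{-\delta j}\|\Omega\|_{L^q}$, from the $TT^*$ argument with van der Corput; for $L\log L$ data, decompose $\Omega$ into level pieces $\Omega_m$ with $\|\Omega_m\|_\infty\le 2^m$, as in Seeger. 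For the bad part, use the two-center rough oscillatory estimate: for a cube $Q$ of side $2^{s}$ and $j>s$, one has $\|T_j b_Q\|_{2}^2$ decaying like $2^{-\delta(j-s)}$ times a factor involving the coefficients (via sublevel-set bounds). Cubes where the phase is effectively non-oscillatory at scale $2^j$ are handled by Fourier-expanding the mixed phase into single-variable products, using the mean-value correction, and then invoking Seeger's estimates for the convolution pieces.

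\textbf{Main obstacle.} The main difficulty is that the supremum over $\varepsilon$ sits inside the bad-function estimate. The sum over $j$ with absolute values destroys the cancellation needed for the nonoscillatory scales ($2^{j}\lesssim$ the coefficient scale). For those scales I would not use $\sum_j|T_j|$. Instead, I would treat them via the Fourier-expansion method, which turns the problem into rough \emph{maximal truncated} convolution operators $T_\Omega^*$ applied to modulated functions, and these are weak $(1,1)$. The sum $\sum|T_j|$ would be used only on the oscillatory scales, where the $2^{-\delta(j-s)}$ decay makes it summable.
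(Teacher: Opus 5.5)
\textbf{The gap: high positive scales of the bad function.} The problem is the bad-function part of the positive-scale maximal tail $\sup_{J}|\sum_{j\ge J}T_jb|$ at the high (oscillatory) scales. That is exactly where you propose to fall back on $\sum_j|T_jb|$, and that step fails.

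The two-center estimate gives no decay in the annular scale $j$. What it yields (Theorem~\ref{prop:high}) is an almost-orthogonality bound for the \emph{signed} sum, $\|\sum_{j\in\mathcal J}T_{j,\Omega_s}B_{j-s}\|_2^2\le C\tau\|\Omega\|_1^2\,2^{-(\kappa_1-2\eta_0)s}\sum_{j}\|B_{j-s}\|_1$. The gain depends only on the separation $s$, uniformly in $j$. For a rough kernel, even the self-interaction of a single bad cube is only bounded by $2^{-ns}\tau\|b_Q\|_1$, whatever $j$ is.

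With absolute values inside, all you can do is
$\|\sum_j|T_jB_{j-s}|\|_2\le\sum_j\|T_jB_{j-s}\|_2\lesssim 2^{-\kappa s/2}\sqrt{\tau}\sum_j\|B_{j-s}\|_1^{1/2}$,
and $\sum_j\|B_{j-s}\|_1^{1/2}$ is not controlled by $\|f\|_1^{1/2}$. Pointwise, the natural overlap count only lets you restrict to about $2^{ns}$ active scales at a point while keeping the exceptional set of measure $\lesssim\sum_Q|Q|$. Cauchy--Schwarz across that many scales costs $2^{ns/2}$, far more than the small gain. Organizing by cube instead, as your ``$2^{-\delta(j-s)}$ for a cube of side $2^s$'' suggests, only moves the problem into the cross terms between different cubes, which have no cancellation once absolute values are taken.

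The missing idea is the paper's:
\begin{itemize}
\item The signed bound holds for every subfamily and every choice of signs $\epsilon_Q$, since sign changes preserve support, mass and the phase-adapted cancellation.
\item Place each $Q$ in a dyadic cube $I(Q)$ of side $2^{j(Q)+c_0}$ from one of $3^n$ shifted grids. This turns each tail $j(Q)\ge J$ into an initial partial sum along a nested chain.
\item Discard $F_s=\{d_s>\lceil 2^{(n+1)s}\rceil\}$, which has measure $\lesssim 2^{-s}\sum_Q|Q|$.
\item The Rademacher--Menshov inequality then controls all tails with loss only $\log(2+N_s)\approx s$, which $2^{-(\kappa_1/2-\eta_0)s}$ absorbs.
\end{itemize}

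\textbf{You have the difficulty backwards.} The low positive scales $1\le j\le\varrho s$ are harmless for the supremum. For fixed $s$ there are only $O(s)$ of them, so the decomposition $G_{j,s}+E_{j,s}+\mathcal R_{j,s}$ of Theorem~\ref{prop:lowfull} can be summed with absolute values: in $L^1$ for $E_{j,s},\mathcal R_{j,s}$, and via Cauchy--Schwarz in $L^2$ for $G_{j,s}$. The cost is only $\sqrt{1+s}$.

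Your replacement for these scales, reducing to $T_\Omega^*$ applied to modulated functions, would not close:
\begin{itemize}
\item The Fourier expansion is valid only on boxes of one scale $2^j$.
\item Its coefficients grow like $(1+\mathfrak M_h)^{N_0}$. They are absorbed only because Seeger's estimates supply the factor $2^{-s/4}$, which a bare weak-type bound for $T_\Omega^*$ does not provide. Weak-type bounds also cannot be summed over $s$.
\item Boxes with $\mathfrak M_h>2^{\sigma s}$ require the equal-scale two-center estimate instead.
\end{itemize}

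\textbf{The base case is misattributed.} Weak type $(1,1)$ for the rough maximal truncated operator $T_\Omega^*$ does not follow from Seeger's theorem plus the Christ--Rubio de Francia bound for $M_\Omega$. The supremum over truncations is the same obstruction there. The paper invokes Lai's theorem for it.
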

\smallskip

\subsection{Outline of the proof of Theorem 1.1}

The usual tools of harmonic analysis such as Calder\'{o}n-Zygmund decomposition,
Littlewood-Paley theory, $TT^*$ methods and  microlocal decomposition  have been developed in this context. e.g., \cite{CR1988, Seeger1996}. 
\smallskip

We begin by introducing the relevant notation.
Fix a nonnegative real-valued function $\phi\in C_c^\infty((1/2,2))$ such that
\begin{equation}\label{eq:partition}
	\sum_{j\in\Z}\phi(2^{-j}t)=1,\qquad t>0,
\end{equation}
and define the annular kernels and the corresponding operators by
\begin{equation}\label{eq:Tj}
	\begin{aligned}
		k_j(x)&=\frac{\phi(2^{-j}|x|)}{|x|^n},
		\qquad K_j(x)=\Omega(x')k_j(x),\\
		T_jf(x)&=\int_{\R^n}e^{iP(x,y)}K_j(x-y)f(y)\,dy.
	\end{aligned}
\end{equation}
\smallskip
We split  $T_P$ into its local-scale and positive-scale parts:
\[
T_{\le0}f=\sum_{j\le0}T_jf,
\qquad
T_{>0}f=\sum_{j>0}T_jf.
\]

For \(T_{\le 0}\), we invoke Seeger’s result \cite{Seeger1996}, together with the induction hypothesis and the Calder{\'{o}}n–Zygmund theory, to show that \(T_{\le 0}\) is of weak type \((1,1)\). To treat $T_P$ it suffices to treat $T_{>0}.$
Given $\lambda>0$, set $\tau=\lambda/(1+\|\Omega\|_{L\log^+\!\!L})$.
We shall perform a Calder\'on--Zygmund decomposition
$f=g+\sum_Qb_Q$ at height $\tau$ with phase-adapted cancellation, so that
\[
\operatorname{supp}b_Q\subset Q,\qquad
\norm{b_Q}_1\le C\tau|Q|,
\qquad
\int_Qe^{iP(c_Q,y)}b_Q(y)\,dy=0,
\]
where $c_Q$ denotes the center of $Q$. Grouping the bad functions by sidelength, we write
\(
B_k=\sum_{\ell(Q)=2^k}b_Q.
\)
In $T_jB_{j-s}$, the parameter $s$ measures the separation between
the annular scale and the scale of the bad cubes.
\smallskip

For small constants
$\varrho,\eta_0>0$ to be chosen below, we divide the positive scales
into $j>\varrho s$ and $1\le j\le\varrho s$, and set
\(
\Omega_s
=\Omega\one_{\{|\Omega|\le2^{\eta_0s}\norm\Omega_1\}}.
\)
Let $T_{j,\Omega_s}$ denote the operator obtained from \eqref{eq:Tj}
by replacing $\Omega$ with $\Omega_s$.

\smallskip

We first treat the local scales
by induction on the mixed bidegree. For positive-scale parts,  we perform a Calder\'on--Zygmund
decomposition with phase-adapted cancellation. The good function is controlled
by the known $L^2$ estimate. The main task is therefore to estimate the
bad functions at  positive scales.

\smallskip

At scale $R=2^j$ and near a local center $x_h$, define the mixed phase
\[
\Pi_h(x,y)
=P(x,y)-P(x,x_h)-P(x_h,y)+P(x_h,x_h).
\]
Write
\[
\Pi_h(x,y)
=
\sum_{\substack{|\alpha|\ge1\\|\beta|\ge1}}
p_{\alpha\beta}^h
\left(\frac{x-x_h}{R}\right)^\alpha
\left(\frac{y-x_h}{R}\right)^\beta,
\qquad
\mathfrak M_h
=
\sum_{\substack{|\alpha|\ge1\\|\beta|\ge1}}
|p_{\alpha\beta}^h|.
\]
Thus $\mathfrak M_h$ measures the size of the coefficients of the mixed phase at this scale.
We denote the Fourier frequency indices by
$\gamma=(\gamma_1,\gamma_2)\in\Z^{2n}$;
here $L_0$ is the fixed parameter specifying the periodic box, and $N_0=4n+6$ is the derivative order used below.
\smallskip

With this notation, we describe the two main steps for estimating the contribution of the bad functions
to the positive-scale part $T_{>0}$: two-center rough oscillatory estimates at high positive scales,
and Fourier expansion of the mixed phase and mean correction at low positive scales.
In the latter case, the dependence of the Fourier coefficient bounds on the
mixed phase leads to a division according to the size of its coefficients.
\smallskip

\noindent\textbf{1. Two-center rough oscillatory estimates.}
Expanding the square of the $L^2$ norm of an annular operator produces a correlation kernel
involving  the phase difference $P(x,y)-P(x,z)$ and two rough functions centered at $y$ and $z$, respectively:
\(
\Omega_s\!\left(\frac{x-y}{|x-y|}\right),
\overline{\Omega_s\!\left(\frac{x-z}{|x-z|}\right)}.
\)
Standard polar coordinates fix only one of the two directions,
while the other still varies with the oscillatory variable. The most difficult scenario is the following: 
one cannot directly apply van der Corput’s lemma, which imposes certain regularity assumptions on the amplitude.
We draw on the geometric change of variables of
Christ--Rubio de Francia~\cite[\S4, proof of Lemma~4.4]{CR1988}
to separate the two rough factors in coordinates adapted to both centers.
However,  this separation alone gives no quantitative lower bound
for the mixed derivative of the transformed phase.
We relate this derivative to the original polynomial coefficients through
an algebraic identity preserving the highest-degree terms.
Polynomial sublevel set estimates based on the
Carbery--Wright inequality~\cite[Theorem~8]{CW2001}
control the regions where the derivative is small,
while a $TT^*$ argument on their complement yields oscillatory decay
without differentiating either rough factor.
\smallskip

On a two-dimensional section passing through the two centers, we write
\[
x=z+r(pe+t\theta),\qquad
e=\frac{y-z}{|y-z|},\quad
H=\frac{|y-z|}{r},\quad
\theta\in e^\perp\cap\Sph^{n-1}.
\]
where $y\ne z$, $r>0$ is the annular scale under consideration,
and $p\in\R$, $t>0$. Set
\[
u=\frac pt,\qquad
v=\frac{p-H}{Lt},\qquad L=\max(1,H).
\]
Then
\[
\frac{x-z}{|x-z|}
=\frac{ue+\theta}{\sqrt{1+u^2}},
\qquad
\frac{x-y}{|x-y|}
=\frac{Lve+\theta}{\sqrt{1+L^2v^2}}.
\]
For fixed $\theta$, the two rough functions become
\[
\Omega_s\!\left(\frac{Lve+\theta}{\sqrt{1+L^2v^2}}\right),\qquad
\overline{\Omega_s\!\left(\frac{ue+\theta}{\sqrt{1+u^2}}\right)},
\]
and hence depend only on $v$ and $u$, respectively.
We apply a one-dimensional oscillatory operator to the former and integrate
its product with the latter. By the Cauchy--Schwarz inequality,
it suffices to bound the $L^2$ norm of this one-dimensional operator.
On the region where the mixed derivative is nondegenerate,
a $TT^*$ argument and van der Corput's lemma for integrals with smooth amplitudes
yield oscillatory decay without differentiating either rough function.
\smallskip

To obtain this nondegeneracy, we must relate the mixed derivative after the
change of variables to the coefficients of the original phase.
If $q_\theta(p,t)$ is the phase on the section and $\Phi_\theta$
is the transformed phase, then
\[
\partial_u\partial_v\Phi_\theta
=-\frac{Lt^2}{H^2}(E_H+1)E_0q_\theta,
\qquad
E_0=p\partial_p+t\partial_t,\quad
E_H=(p-H)\partial_p+t\partial_t.
\]
For the homogeneous component $q_k$ of degree $k$, we have
\[
(E_H+1)E_0q_k
=k(k+1)q_k-kH\partial_pq_k.
\]
This triangular relation preserves the highest-degree coefficients and
quantitatively relates the mixed derivative to the coefficients of the original phase.
Combining sublevel set estimates in the section direction with those for the mixed derivative,
we isolate the region where the derivative is sufficiently small and apply the preceding
one-dimensional $L^2$ estimate on its complement. This yields the two-center
rough oscillatory estimate and also accommodates distinct annular scales.
Since the bad functions satisfy mass bounds only on entire cubes,
the degenerate region must be estimated together with all the bad cubes that intersect it.
This allows us to convert decay of the correlation kernel into an $L^2$ estimate
for the bad functions.
\smallskip

Applied to the truncated kernel $\Omega_s$, the two-center estimate gives
\[
\norm{\sum_{j\in\mathcal J}T_{j,\Omega_s}B_{j-s}}_2^2
\le
C\tau\norm\Omega_1^2\,2^{-(\kappa_1-2\eta_0)s}
\sum_{j\in\mathcal J}\norm{B_{j-s}}_1,
\qquad
\mathcal J\subset\{j>\varrho s\},
\]
where $\mathcal J$ is any finite set and $\kappa_1>0$.
Choosing $\eta_0$ sufficiently small allows the scale decay to absorb the growth of the truncated kernel.

\medskip
\noindent\textbf{2. Fourier expansion of the mixed phase and mean correction.}
At low positive scales, the annular scale need not increase with $s$,
so the lower bound used in the high positive scale argument is no longer
available. We instead use the separation between the annular scale
$R=2^j$ and the bad-cube sidelength $R\cdot 2^{-s}$ through Seeger's convolution
estimates(\cite{Seeger1996}). To obtain a convolution structure, we retain the phases
depending on a single variable as unimodular multipliers and expand
the mixed phase. After inserting fixed smooth cutoffs, we obtain
\[
e^{i\Pi_h(x,y)}
=
\sum_{\gamma=(\gamma_1,\gamma_2)\in\Z^{2n}}
c_\gamma^h
e^{i\pi\gamma_1\cdot(x-x_h)/(L_0R)}
e^{i\pi\gamma_2\cdot(y-x_h)/(L_0R)}.
\]
This identity is used on the support of the localized operator, and the coefficients satisfy
\[
\sum_\gamma
|c_\gamma^h|(1+|\gamma_2|+\mathfrak M_h)
\le C(1+\mathfrak M_h)^{N_0},
\] where $\mathfrak M_h
=
\sum_{\substack{|\alpha|\ge1\\|\beta|\ge1}}
|p_{\alpha\beta}^h|.$
The two exponential factors in each mode depend only on $x$ and $y$, respectively;
the remaining kernel therefore possesses a convolution structure.
\smallskip

The expansion is valid for every $\mathfrak M_h$, but the coefficient
bound is not uniform in the coefficients of $P$. Seeger's estimates
provide a factor $2^{-s/4}$ in the $L^2$ norm of the main term and
the $L^1$ norm of the remainder. Thus Fourier summation preserves decay
when $\mathfrak M_h\le2^{\sigma s}$ and $\sigma>0$ is sufficiently small,
since
\[
2^{-s/4}(1+\mathfrak M_h)^{N_0}
\le C2^{-(1/4-N_0\sigma)s}.
\]
This is the reason for dividing the localized terms according to the
size of $\mathfrak M_h$. We use Fourier expansion and mean correction
when $\mathfrak M_h\le2^{\sigma s}$. When
$\mathfrak M_h>2^{\sigma s}$, we instead apply the equal-scale
two-center estimate to obtain decay from the large mixed coefficients.
\smallskip

In the small-phase region, the bad functions obtained after Fourier
expansion need not have ordinary mean zero.
However, the phase-adapted cancellation imposed earlier quantitatively controls their means.
If
\[
f_{Q,\gamma_2}(y)
=
e^{i[P(x_h,y)-P(x_h,x_h)]}
e^{i\pi\gamma_2\cdot(y-x_h)/(L_0R)}b_Q(y),
\]
then the cancellation condition
$\int_Qe^{iP(c_Q,y)}b_Q(y)\,dy=0$ gives
\[
\left|\int_Qf_{Q,\gamma_2}(y)\,dy\right|
\le
C2^{-s}(|\gamma_2|+\mathfrak M_h)\norm{b_Q}_1.
\]
This estimate involves only the variation of the mixed phase and the Fourier mode;
no derivative acts on the pure $y$-phase.
After subtracting the mean of each bad function, we may apply Seeger's microlocal estimates
for convolution operators. The mean-correction error is controlled by the preceding
bound and the weighted sum of the Fourier coefficients. Finally, we choose
$\sigma,\eta_0$ sufficiently small so that the resulting decay absorbs the growth
of the Fourier coefficients and the truncated kernel. The total contribution of
$\Omega-\Omega_s$ is controlled by the $L\log^+L$ assumption.

\subsection{Organization and Notation} The paper is organized as follows.
In Section \ref{sec:prelim}, we collect the necessary preliminaries.
Section \ref{sec:main-proof} proves Theorem \ref{thm:main}, subject to the
high and low positive scale estimates stated there.
Section \ref{sec:high} establishes Theorem \ref{prop:high}.
Section \ref{sec:reduction} constructs the low positive scale decomposition
and proves Theorem \ref{prop:lowfull}, using the directional projection
estimates established in Section \ref{sec:microlocal}.
Finally, Section \ref{sec:maximal} proves Theorem \ref{thm1}
by treating local maximal truncations and positive dyadic tails.
\smallskip

We now fix our notation.\begin{enumerate} \item For a multi-index $\alpha\in\Z_+^n$, write
$|\alpha|=\sum_i\alpha_i$, 
$x^\alpha=\prod_i x_i^{\alpha_i}$, 
$\partial^\alpha=\prod_i\partial_i^{\alpha_i}$.
Fix an upper bound $D\ge\deg P$ for the total degree.
The largest mixed bidegree $(m,l)$ in lexicographic order satisfies $m+l\le D$.

\item Set
\[
\norm\Omega_1
:=\int_{\Sph^{n-1}}|\Omega(\theta)|\,d\sigma(\theta),\,\,
\|\Omega\|_{L\log^+\! L}
 :=
 \int_{\Sph^{n-1}}|\Omega(\theta)|\log\!\bigl(2+|\Omega(\theta)|\bigr)d\sig(\theta)\]
and 
$$A:=1+\norm{\Omega}_{L\log^+L}.$$
\smallskip

\item The letter $C$ denotes a positive constant that may change from one occurrence to the next.
Unless otherwise stated, it depends only on $n,D$ and the fixed cutoff functions
and geometric parameters, but not on the individual coefficients of $P$.
Constants in the general lemmas may also depend on the fixed parameters specified there.
\item  For a polynomial $q(x)=\sum_\alpha c_\alpha x^\alpha$, write
\[
\norm q_{\coef}=\sum_\alpha|c_\alpha|,
\qquad
\norm q_{\nc}=\sum_{|\alpha|\ge1}|c_\alpha|.
\]
\end{enumerate}

\section{Preliminaries}\label{sec:prelim}

We use the following phase-adapted Calder\'on--Zygmund
decomposition, whose cancellation condition is tailored to
the phase in the oscillatory integral.

\begin{lemma}\label{lem:adapted-CZ}
Let $f\in L^1(\R^n)$ and $\tau>0$, and let $\vartheta_Q$ be a real-valued measurable function on each cube. There exists a family $\mathcal Q$ of pairwise disjoint dyadic cubes such that
\begin{equation}\label{eq:pre-CZ-cubes}
 \tau<\frac1{|Q|}\int_Q|f|\le2^n\tau,
 \qquad \sum_{Q\in\mathcal Q}|Q|\le\tau^{-1}\norm f_1,
 \qquad |f|\le\tau\quad\text{a.e. on }\Bigl(\bigcup_QQ\Bigr)^c.
\end{equation}
Define
\[
 g(y)=\begin{cases}
 e^{-i\vartheta_Q(y)}|Q|^{-1}\displaystyle\int_Qe^{i\vartheta_Q(z)}f(z)\,dz,&y\in Q,\\
 f(y),&y\notin\bigcup_QQ,
 \end{cases}
 \qquad b_Q=(f-g)\one_Q.
\]
Then
\begin{gather}\label{eq:pre-CZ-bounds}
 \norm g_\infty\le2^n\tau,\quad \norm g_1\le\norm f_1,
 \quad\norm g_2^2\le2^n\tau\norm f_1,\\
 \norm{b_Q}_1\le2\int_Q|f|\le2^{n+1}\tau|Q|,
 \quad\sum_Q\norm{b_Q}_1\le2\norm f_1,
 \quad\int_Qe^{i\vartheta_Q(y)}b_Q(y)\,dy=0.\notag
\end{gather}

\end{lemma}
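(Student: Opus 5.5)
The plan is to run the standard dyadic Calder\'on--Zygmund stopping-time argument at height $\tau$ and then check that replacing the ordinary mean by the phase-twisted mean changes nothing essential.

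\textbf{Step 1: selecting the cubes.} Let $\mathcal Q$ be the maximal dyadic cubes $Q$ with $|Q|^{-1}\int_Q|f|>\tau$. Maximal cubes exist because the averages tend to $0$ as the side length grows, since $f\in L^1$. The chosen cubes are pairwise disjoint. Maximality means the dyadic parent $\widehat Q$ has average $\le\tau$. Since $|\widehat Q|=2^n|Q|$, this gives $|Q|^{-1}\int_Q|f|\le 2^n\tau$. Summing $|Q|<\tau^{-1}\int_Q|f|$ over the disjoint cubes gives $\sum_Q|Q|\le\tau^{-1}\norm f_1$. Off $\bigcup Q$, every dyadic cube containing a point has average $\le\tau$. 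The Lebesgue differentiation theorem along dyadic cubes then gives $|f|\le\tau$ a.e. there. This proves \eqref{eq:pre-CZ-cubes}.

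\textbf{Step 2: bounds for $g$.} On $Q$, the function $\vartheta_Q$ is real, so $|e^{\pm i\vartheta_Q}|=1$. Hence
\[
|g(y)|\le|Q|^{-1}\int_Q|f|\le2^n\tau \qquad (y\in Q).
\]
Off the cubes, $|g|=|f|\le\tau$, so $\norm g_\infty\le2^n\tau$. Next, $\int_Q|g|\le\int_Q|f|$ on each cube, and $g=f$ elsewhere, so $\norm g_1\le\norm f_1$. Finally $\norm g_2^2\le\norm g_\infty\norm g_1\le2^n\tau\norm f_1$.

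\textbf{Step 3: bounds and cancellation for $b_Q$.} From Step 2,
\[
\norm{b_Q}_1\le\int_Q|f|+\int_Q|g|\le2\int_Q|f|\le2^{n+1}\tau|Q|.
\]
Summing over the disjoint cubes gives $\sum_Q\norm{b_Q}_1\le2\norm f_1$. For the cancellation, the twist is designed so that on $Q$ we have $e^{i\vartheta_Q}g\equiv|Q|^{-1}\int_Qe^{i\vartheta_Q}f$, a constant. Therefore
\[
\int_Qe^{i\vartheta_Q}b_Q=\int_Qe^{i\vartheta_Q}f-|Q|\cdot|Q|^{-1}\int_Qe^{i\vartheta_Q}f=0.
\]
Measurability of $\vartheta_Q$ is only needed to make these integrals meaningful.

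There is no real obstacle here. The only point needing care is that every estimate for $g$ uses $|e^{i\vartheta_Q}|=1$, which holds because $\vartheta_Q$ is real-valued. This is what lets the twisted mean inherit all the usual size bounds.
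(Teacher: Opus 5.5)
Your proposal is correct and follows essentially the same argument as the paper: maximal dyadic cubes at height $\tau$, the parent bound giving the $2^n\tau$ ceiling, dyadic differentiation off the cubes, and $|e^{\pm i\vartheta_Q}|=1$ to transfer all size bounds to the twisted mean, with cancellation read off from the constancy of $e^{i\vartheta_Q}g$ on $Q$. Your remark on why maximal cubes exist (averages of an $L^1$ function tend to $0$ as the sidelength grows) is a small point the paper leaves implicit.
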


\begin{proof}
	The cubes  $\{Q\} $ are obtained from the standard Calder\'on--Zygmund decomposition; see \cite{Grafakos2014, Stein1993}.
	Choose the maximal dyadic cubes satisfying
	\(
	\frac1{|Q|}\int_Q|f|>\tau.
	\)
	They are pairwise disjoint. If $Q^+$ denotes the dyadic
	parent of $Q$, then
	\[
	\tau<\frac1{|Q|}\int_Q|f|
	\le\frac1{|Q|}\int_{Q^+}|f|
	\le 2^n\tau.
	\]
	Consequently,
	\[
	\sum_{Q\in\mathcal Q}|Q|
	\le \tau^{-1}\|f\|_1.
	\]
	The dyadic differentiation theorem also gives
	$|f|\le\tau$ almost everywhere outside
	$\bigcup_{Q\in\mathcal Q}Q$.
	
	For each $Q\in\mathcal Q$, set
	\[
	c_Q=\frac1{|Q|}\int_Q e^{i\vartheta_Q(z)}f(z)\,dz.
	\]
	Then $g=e^{-i\vartheta_Q}c_Q$ on $Q$, and hence
	\[
	|g(y)|\le\frac1{|Q|}\int_Q|f|,
	\qquad
	\int_Q|g(y)|\,dy\le\int_Q|f(y)|\,dy.
	\]
	It follows that
	\[
	\|g\|_\infty\le 2^n\tau,\qquad
	\|g\|_1\le\|f\|_1,\qquad
	\|g\|_2^2\le 2^n\tau\|f\|_1.
	\]
	Moreover,
	\[
	\|b_Q\|_1
	\le\int_Q(|f|+|g|)
	\le2\int_Q|f|
	\le2^{n+1}\tau|Q|,
	\]
	so that $\sum_Q\|b_Q\|_1\le2\|f\|_1$.
	Finally, by the definition of $c_Q$,
	\[
	\begin{aligned}
		\int_Q e^{i\vartheta_Q(y)}b_Q(y)\,dy
		&=\int_Q e^{i\vartheta_Q(y)}f(y)\,dy
		-c_Q|Q|\\
		&=0.
	\end{aligned}
	\]
	This proves the lemma.
\end{proof}

\begin{lemma}[Local truncation]\label{lem:localtrunc}
Let $T$ be a weak type $(1,1)$ operator with $\weak{Tf}\le A_T\norm f_1$, whose kernel satisfies
\[
 |K(x,y)|\le\frac{|\Omega(x-y)|}{|x-y|^n},\qquad \Omega\in L^1(\Sph^{n-1}).
\]
Then, for $\varepsilon>0$,
\begin{equation}\label{eq:localtrunc-bound}
 T_\varepsilon f(x)=\pv\int_{|x-y|<\varepsilon}K(x,y)f(y)\,dy,
 \qquad\weak{T_\varepsilon f}\le C(A_T+\norm\Omega_1)\norm f_1.
\end{equation}
\end{lemma}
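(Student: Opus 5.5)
We prove the local truncation estimate \eqref{eq:localtrunc-bound} by the classical localization argument. The idea is that for functions supported in a cube of sidelength comparable to $\varepsilon$, the truncated operator agrees with $T$ near the cube, up to a tail term controlled by the size bound on the kernel. By the dilation structure of the kernel bound, we may work directly at scale $\varepsilon$.

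\emph{Step 1: decomposition.} Tile $\R^n$ by cubes $\{Q_m\}$ of sidelength $\varepsilon/\sqrt n$, and write $f=\sum_m f_m$ with $f_m=f\one_{Q_m}$. Let $Q_m^*$ be the concentric cube dilated by a fixed factor $c_n$, chosen so that $\{x:\dist(x,Q_m)<\varepsilon\}\subset Q_m^*$. The cubes $Q_m^*$ have bounded overlap, with overlap number depending only on $n$. Since $T_\varepsilon f_m$ is supported in $Q_m^*$, we have
\[
T_\varepsilon f(x)=\sum_m \one_{Q_m^*}(x)\,T_\varepsilon f_m(x).
\]
On $Q_m^*$ we write $T_\varepsilon f_m=Tf_m-R_\varepsilon f_m$, where
\[
R_\varepsilon f_m(x)=\int_{|x-y|\ge\varepsilon}K(x,y)f_m(y)\,dy .
\]

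\emph{Step 2: the tail term.} For $x\in Q_m^*$ and $y\in Q_m$ we have $\varepsilon\le|x-y|\le C\varepsilon$. Hence, by polar coordinates,
\[
\int_{Q_m^*}|R_\varepsilon f_m(x)|\,dx\le\int|f_m(y)|\int_{\varepsilon\le|x-y|\le C\varepsilon}\frac{|\Omega(x-y)|}{|x-y|^n}\,dx\,dy\le C\norm\Omega_1\norm{f_m}_1 .
\]
Summing over $m$ gives the $L^1$ bound $C\norm\Omega_1\norm f_1$ for $\sum_m\one_{Q_m^*}|R_\varepsilon f_m|$. By Chebyshev's inequality this yields the corresponding weak-type bound.

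\emph{Step 3: the main term $\sum_m\one_{Q_m^*}Tf_m$.} This is the main obstacle: weak $L^1$ is not normed, so we cannot simply sum $\weak{Tf_m}\le A_T\norm{f_m}_1$. We use bounded overlap instead. Fix $\lambda>0$. If $|\sum_m\one_{Q_m^*}(x)Tf_m(x)|>\lambda$, then at most $N=N(n)$ indices $m$ contribute at $x$, so some $m$ satisfies $x\in Q_m^*$ and $|Tf_m(x)|>\lambda/N$. Therefore
\[
\Bigl|\Bigl\{\Bigl|\sum_m\one_{Q_m^*}Tf_m\Bigr|>\lambda\Bigr\}\Bigr|\le\sum_m\bigl|\{x\in Q_m^*:|Tf_m(x)|>\lambda/N\}\bigr|\le\sum_m\frac{NA_T}{\lambda}\norm{f_m}_1=\frac{NA_T}{\lambda}\norm f_1 .
\]

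\emph{Step 4: combining the terms.} By the quasi-triangle inequality for $L^{1,\infty}$, namely $\weak{F+G}\le2(\weak F+\weak G)$, the two pieces combine to give \eqref{eq:localtrunc-bound}. The principal value in $T_\varepsilon$ needs no separate treatment. It is inherited from $T$ on $Q_m^*$: there the difference $Tf_m-T_\varepsilon f_m=R_\varepsilon f_m$ is an absolutely convergent integral, so $T_\varepsilon f_m$ is well defined wherever $Tf_m$ is.
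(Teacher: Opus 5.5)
Your argument is correct and is essentially the paper's localization proof. The paper tiles at scale $\varepsilon$ and writes $T_\varepsilon$ as $T$ applied to a localized piece minus a tail supported in $\varepsilon\le|x-y|\le C\varepsilon$, which is controlled by $\|\Omega\|_1$ via polar coordinates, exactly as you do. The only difference is which variable is split disjointly. You partition $f$ into disjoint pieces and use the bounded overlap of the output cubes $Q_m^*$ in Step 3. The paper instead restricts $x$ to disjoint cubes $I$ and applies $T$ to the overlapping pieces $f\mathbf 1_{I^*}$, so bounded overlap enters only through $\sum_I\|f\mathbf 1_{I^*}\|_1\le C\|f\|_1$.
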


\begin{proof}
	Partition $\R^n$ into half-open cubes $I$ of sidelength
	$\varepsilon$. Choose a fixed concentric dilate $I^*$ such that
	\[
	x\in I\Longrightarrow
	B(x,\varepsilon)\subset I^*\subset B(x,C\varepsilon),
	\qquad
	\sum_I\one_{I^*}\le C.
	\]
	Set $f_I=f\one_{I^*}$. For $x\in I$, the kernel
	representation gives
	\[
	T_\varepsilon f(x)=Tf_I(x)-E_If(x),
	\qquad
	E_If(x)=
	\int_{I^*\cap\{|x-y|\ge\varepsilon\}}
	K(x,y)f(y)\,dy.
	\]
	By the bounded overlap of $\{I^*\}$,
	\[
	\sum_I\norm{f_I}_1\le C\norm f_1.
	\]
	Hence
	\[
	\begin{aligned}
		\sum_I\bigl|\{x\in I:|Tf_I(x)|>\lambda/2\}\bigr|
		&\le \frac{2A_T}{\lambda}\sum_I\norm{f_I}_1\\
		&\le \frac{CA_T}{\lambda}\norm f_1.
	\end{aligned}
	\]
	and
	\[
	\begin{aligned}
		\sum_I\int_I|E_If(x)|\,dx
		&\le \int_{\R^n}|f(y)|
		\int_{\varepsilon\le|x-y|\le C\varepsilon}
		\frac{|\Omega(x-y)|}{|x-y|^n}\,dx\,dy\\
		&\le C\norm\Omega_1\norm f_1.
	\end{aligned}
	\]
	Thus, by Chebyshev's inequality and the decomposition above,
	\[
	\bigl|\{x:|T_\varepsilon f(x)|>\lambda\}\bigr|
	\le \frac{C(A_T+\norm\Omega_1)}{\lambda}\norm f_1.
	\]
	This proves the lemma.
\end{proof}

\begin{lemma}[Schur's test \cite{Grafakos2014}]\label{lem:schur}
Let $Sf(u)=\int K(u,v)f(v)\,dv$, and suppose that
\[
 A_K=\sup_u\int|K(u,v)|\,dv<\infty,\qquad
 B_K=\sup_v\int|K(u,v)|\,du<\infty.
\]
Then $$\norm S_{2\to2}\le(A_KB_K)^{1/2}.$$
\end{lemma}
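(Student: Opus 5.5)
The statement to prove is Schur's test, Lemma~\ref{lem:schur}. The plan is to bound the bilinear form $\langle Sf,g\rangle$ by Cauchy--Schwarz, splitting the kernel weight symmetrically. Fix $f,g\in L^2$. We may assume $A_K,B_K<\infty$. Since the integrals involve $|K|$, Tonelli applies throughout, and we work with nonnegative functions $|f|,|g|$:
\[
\abs{\langle Sf,g\rangle}\le\iint|K(u,v)|\,|f(v)|\,|g(u)|\,dv\,du .
\]

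Write $|K|=|K|^{1/2}|K|^{1/2}$. Apply Cauchy--Schwarz in the product measure, pairing $|K|^{1/2}|g(u)|$ with $|K|^{1/2}|f(v)|$:
\[
\iint|K||f(v)||g(u)|\le\Bigl(\iint|K(u,v)||g(u)|^2\,dv\,du\Bigr)^{1/2}\Bigl(\iint|K(u,v)||f(v)|^2\,du\,dv\Bigr)^{1/2}.
\]
In the first factor, integrate in $v$ first, which gives at most $A_K\norm g_2^2$. In the second, integrate in $u$ first, which gives at most $B_K\norm f_2^2$. Hence $\abs{\langle Sf,g\rangle}\le (A_KB_K)^{1/2}\norm f_2\norm g_2$.

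Taking the supremum over $\norm g_2\le1$ yields $\norm{Sf}_2\le(A_KB_K)^{1/2}\norm f_2$. The same computation with $g=\one_E\,\overline{\operatorname{sgn}}$ applied to $\int|K||f|$ also shows that $Sf(u)$ is absolutely convergent for a.e.\ $u$, so the operator is well defined.

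There is no real obstacle here. The only care needed is the measure-theoretic justification, namely Tonelli applied to nonnegative integrands and a.e.\ finiteness of $\int|K(u,v)||f(v)|\,dv$, which follows from the $L^2$ bound just obtained for the positive operator with kernel $|K|$.
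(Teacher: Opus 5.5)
Your argument is correct and is the standard proof of Schur's test; the paper gives no proof of its own and simply cites Grafakos, where the same Cauchy--Schwarz argument with the symmetric splitting $|K|=|K|^{1/2}|K|^{1/2}$ appears. The only difference is that you run it through the bilinear form. Running it pointwise instead, via $\bigl(\int|K(u,v)||f(v)|\,dv\bigr)^2\le A_K\int|K(u,v)||f(v)|^2\,dv$ and then integrating in $u$, gives both the a.e. absolute convergence of $Sf$ and the bound $\norm{Sf}_2^2\le A_KB_K\norm f_2^2$ in one step. That would replace your somewhat unclear side remark about testing against $g=\one_E\overline{\operatorname{sgn}}$.
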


\begin{lemma}[Carbery--Wright {\cite[Theorem~8]{CW2001}}]
	\label{lem:CW}
Let $\mu$ be a log-concave probability measure on $\R^n$, that is,
$\mu(\R^n)=1$, and, for all Borel sets $A,B\subset\R^n$
and all $\lambda\in[0,1]$,
\[
\mu\bigl((1-\lambda)A+\lambda B\bigr)
\ge
\mu(A)^{1-\lambda}\mu(B)^\lambda,
\]
where
\(
(1-\lambda)A+\lambda B
=
\{(1-\lambda)x+\lambda y:x\in A,\ y\in B\}
\). Let $q$ be a real polynomial of degree $m\ge1$ such that $\int|q|\,d\mu>0$. Then
\begin{equation}\label{eq:CW-input}
 \mu\{|q|\le a\}\le C m\left(\frac{a}{\int|q|\,d\mu}\right)^{1/m},\qquad a>0.
\end{equation}
\end{lemma}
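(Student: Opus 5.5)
This is the Carbery--Wright inequality, which the paper quotes from \cite[Theorem~8]{CW2001} rather than proves. If I had to reprove it, I would follow their strategy: reduce to one dimension by the localization lemma of Lov\'asz--Simonovits and Kannan--Lov\'asz--Simonovits, then do a Remez-type estimate on a line. By homogeneity I may assume $\int|q|\,d\mu=1$. Suppose the bound fails for some $a>0$: with $\lambda:=\mu\{|q|\le a\}$ we would have $\lambda>Cm\,a^{1/m}$. The aim is to contradict a one-dimensional inequality of the form
\begin{equation*}
\int|q|\,d\nu\le\Bigl(\frac{C'm}{\nu(E)}\Bigr)^{m}\sup_E|q|,
\end{equation*}
required for every log-concave probability measure $\nu$ on a segment, every one-variable polynomial $q$ of degree $\le m$, and every Borel set $E$. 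Taking $E=\{|q|\le a\}$ would give $\lambda\le C'm\,a^{1/m}$, which is the claimed estimate.

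\textbf{Step 1: localization.} Put $g_1=\one_{\{|q|\le a\}}-\lambda+\delta$ and $g_2=|q|-1+\delta$ with a small $\delta>0$. Then $\int g_1\,d\mu>0$ and $\int g_2\,d\mu>0$, with a routine semicontinuity approximation of the indicator. The localization lemma, applied to the log-concave measure $\mu$ (degenerate ones are handled by restricting to the affine hull of the support), produces a ``needle'': a segment $[\xi,\zeta]$ and a log-concave density $\psi$ on it with $\int g_i\,\psi>0$ for $i=1,2$. Normalizing $\psi$ gives a log-concave probability measure $\nu$ on a line. Under $\nu$ we have $\nu\{|q|\le a\}>\lambda-\delta$ and $\int|q|\,d\nu>1-\delta$, and $q$ restricted to the line is a polynomial of degree $\le m$. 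Letting $\delta\to0$, the displayed one-dimensional inequality then yields the contradiction.

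\textbf{Step 2: the one-dimensional estimate.} Let $E$ have $\nu(E)=\lambda$. I would pick nodes $x_0<\dots<x_m$ in $E$ cutting $E$ into pieces of $\nu$-mass about $\lambda/(m+1)$. Lagrange interpolation then gives
\begin{equation*}
|q(x)|\le\sup_E|q|\sum_j\prod_{i\ne j}\frac{|x-x_i|}{|x_j-x_i|}.
\end{equation*}
The log-concavity of $\nu$ is used in two places. First, an interval carrying $\nu$-mass $\ge\lambda/(m+1)$ has length $\gtrsim\frac{\lambda}{m}\,\sigma$, where $\sigma$ is the natural scale of $\nu$, because a log-concave density is bounded by $C/\sigma$. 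This bounds the denominators: $|x_j-x_i|\gtrsim|i-j|\,\lambda\sigma/m$. Second, Borell's lemma gives exponential tails, so $\int|x-x_i|^{k}\,d\nu\le(Ck\sigma)^{k}$. Combining these, and using $k!\,/\prod_{i\ne j}|i-j|\lesssim C^m$ after summing over $j$, gives $\int|q|\,d\nu\le(Cm/\lambda)^m\sup_E|q|$.

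\textbf{Main obstacle.} The delicate point is the bookkeeping in Step 2, so as to get the constant in the form $Cm$ rather than an $m$-dependence that is only exponential. The factorials from the node spacing must cancel against the moment bounds. I would first try a cleaner route: push $\nu$ forward by its distribution function, which reduces to the uniform measure on $[0,1]$ up to a log-concave distortion. On $[0,1]$ the Remez inequality $\sup_{[0,1]}|q|\le(4/|E|)^m\sup_E|q|$ is classical. The remaining issue is controlling the non-compact tails, and for that I would invoke Borell's lemma again. Carbery--Wright carry out exactly this in \cite{CW2001}, and since the paper only uses Lemma~\ref{lem:CW} as a black box, a proof by citation is the right choice here.
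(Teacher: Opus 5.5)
Your operative argument, quoting Carbery--Wright's Theorem~8, is exactly what the paper does: it gives no proof of Lemma~\ref{lem:CW}. Your Step~1 is also sound, namely the localization applied to $g_1=\mathbf 1_{\{|q|\le a\}}-\lambda+\delta$ and $g_2=|q|-1+\delta$, together with the reduction to the affine hull when $\mu$ is degenerate.

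Your sketch of Step~2, however, would not prove the stated bound.

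\emph{Where it fails.} The moment claim $\int|x-x_i|^k\,d\nu\le(Ck\sigma)^k$ assumes the nodes lie within $O(k\sigma)$ of the mean $\bar x$ of $\nu$. But the nodes lie in $E$, and a set of small mass $\lambda$ can sit far out in the tail. For $d\nu=e^{-x}\mathbf 1_{\{x>0\}}\,dx$ and $E=[T,T+1]$ one has $\lambda\simeq e^{-T}$, while $\int|x-x_0|\,d\nu\ge T-1$. Using the correct bound $|x_i-\bar x|\le C\sigma\log(Cm/\lambda)$ for quantile nodes, your chain only gives
\[
\int|q|\,d\nu\le\bigl(C\max\{m,\log(1/\lambda)\}/\lambda\bigr)^m\sup_E|q| .
\]
This is a genuine loss once $\lambda<e^{-m}$. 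The underlying problem is that the node spacing you get from $\|f\|_\infty\le C/\sigma$ is far too small in the tail.

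\emph{What is missing.} You need the pointwise decay $f(x)\le C\sigma^{-1}e^{-c|x-\bar x|/\sigma}$ of a log-concave density. It forces $E$ to have large Lebesgue measure wherever it lies in the tail. One way to use it:
\begin{enumerate}
\item Split $E$ into shells $E_k=E\cap\{2^{k-1}\sigma\le|x-\bar x|<2^k\sigma\}$, with $E_0=E\cap\{|x-\bar x|<\sigma\}$, and pick $k$ with $\nu(E_k)\ge c\lambda(k+1)^{-2}$.
\item The decay gives $|E_k|\ge c\,\nu(E_k)\,\sigma e^{c2^{k-1}}$. So on $J=[\bar x-2^k\sigma,\bar x+2^k\sigma]$ you have $|J|/|E_k|\le C/\lambda$, and the Remez inequality yields $\sup_J|q|\le(C/\lambda)^m\sup_E|q|$.
\item Chebyshev growth $\sup_{2^lJ}|q|\le 2^{(l+1)m}\sup_J|q|$, played against $\nu(\mathbb R\setminus 2^{l-1}J)\le Ce^{-c2^{l-1}}$, then supplies exactly the factor $(Cm)^m$.
\end{enumerate}

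Your alternative of pushing $\nu$ forward by its distribution function $F$ does not work as stated. The function $q\circ F^{-1}$ is no longer a polynomial, so the Remez inequality is unavailable.
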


\section{Proof of Theorem \ref{thm:main}}\label{sec:main-proof}

\subsection{Normalization and local scales}\label{subsec:local}
Following Ricci--Stein~\cite{RS1987} and
Chanillo--Christ~\cite{CC1987}, we argue by induction
on the mixed bidegree of $P$.
Write
\[
P(x,y)=A_1(x)+\widetilde P(x,y)+C_1(y),
\]
where $\widetilde P$ contains only mixed terms. Since
\[
T_Pf=e^{iA_1}T_{\widetilde P}(e^{iC_1}f),
\]
we may discard the pure terms. If $\widetilde P=0$,
the result follows from Seeger's weak type $(1,1)$
estimate~\cite[\S1, Proposition]{Seeger1996}.

We may thus write
\[
P(x,y)
=\sum_{\substack{|\alpha|\ge1\\|\beta|\ge1}}
a_{\alpha\beta}x^\alpha y^\beta.
\]
Let
\[
m=\max\{|\alpha|:a_{\alpha\beta}\ne0\},
\qquad
l=\max\{|\beta|:|\alpha|=m,\ a_{\alpha\beta}\ne0\},
\]
and induct on $(m,l)$ in lexicographic order.
Write
\begin{equation}\label{eq:topblock}
	P(x,y)=\sum_{|\alpha|=m}x^\alpha q_\alpha(y)+R(x,y),
	\qquad
	q_\alpha(y)=\sum_\beta a_{\alpha\beta}y^\beta,
	\qquad
	\deg_xR<m.
\end{equation}
Then $\max_{|\alpha|=m}\deg q_\alpha=l$.

Set
\[
a=\sum_{\substack{|\alpha|=m\\|\beta|=l}}
|a_{\alpha\beta}|>0,
\qquad B=a^{1/(m+l)},
\]
and define
\[
P_B(x,y)=P(x/B,y/B),
\qquad f_B(y)=f(y/B).
\]
By a change of variables and the homogeneity of $\Omega$,
\[
T_Pf(x/B)=T_{P_B}f_B(x),
\qquad
\norm{f_B}_1=B^n\norm f_1,
\]
and
\[
|\{x:|T_Pf(x)|>\lambda\}|
=
B^{-n}|\{x:|T_{P_B}f_B(x)|>\lambda\}|.
\]
We may therefore assume that
\begin{equation}\label{eq:normal}
	\sum_{\substack{|\alpha|=m\\|\beta|=l}}
	|a_{\alpha\beta}|=1.
\end{equation}

Using the dyadic partition of unity $\phi$ in
\eqref{eq:partition}, write
\[
T_P=T_{\le0}+T_{>0},
\qquad
T_{\le0}=\sum_{j\le0}T_j,
\qquad
T_{>0}=\sum_{j>0}T_j,
\]
where
\[
T_jf(x)=\int_{\R^n}e^{iP(x,y)}
\frac{\Omega(x-y)}{|x-y|^n}
\phi(2^{-j}|x-y|)f(y)\,dy.
\]
 We first estimate $T_{\le0}$.

For any real polynomial $Q$, let
\[
S_Qf(x)=\pv\int_{|x-y|<1}
e^{iQ(x,y)}
\frac{\Omega(x-y)}{|x-y|^n}f(y)\,dy.
\]
The support and partition properties of $\phi$ give
\[
\left|
\sum_{j\le0}\phi(2^{-j}r)-\one_{(0,1)}(r)
\right|
\le C\one_{[1/2,2]}(r).
\]

Consequently, by Fubini's theorem and polar coordinates,
\begin{equation}\label{eq:local-smooth-error}
	\begin{aligned}
		\norm{(T_{\le0}-S_P)f}_1
		&\le C\int_{\R^n}|f(y)|
		\int_{1/2\le|x-y|\le2}
		\frac{|\Omega(x-y)|}{|x-y|^n}\,dx\,dy\\
		&=C\int_{1/2}^2\frac{dr}{r}
		\norm\Omega_1\norm f_1\\
		&\le C\norm\Omega_1\norm f_1.
	\end{aligned}
\end{equation}

To estimate $S_P$, choose balls $B(h_\nu,1/4)$
covering $\R^n$ such that
\[
\R^n=\bigcup_\nu B(h_\nu,1/4),
\qquad
\sum_\nu\one_{B(h_\nu,5/4)}\le C,
\]
and set $f_\nu=f\one_{B(h_\nu,5/4)}$.
Then
\[
S_Pf=S_Pf_\nu
\quad\text{on }B(h_\nu,1/4),
\qquad
\sum_\nu\norm{f_\nu}_1\le C\norm f_1.
\]

Fix $h=h_\nu$ and write
\[
P_{m,l}(u,v)=
\sum_{\substack{|\alpha|=m\\|\beta|=l}}
a_{\alpha\beta}u^\alpha v^\beta.
\]
Define
\begin{equation}\label{eq:phase-expand-h}
	R_h(x,y)=P(x,y)-P_{m,l}(x-h,y-h).
\end{equation}
Since translation preserves the highest mixed
bidegree part, the mixed bidegree of $R_h$,
after removing pure terms, is strictly smaller
than $(m,l)$. The induction hypothesis therefore
applies to $R_h$, uniformly in $h$.

For $x\in B(h,1/4)$ and $|x-y|<1$, we have
$|y-h|<5/4$. By the mean value theorem and
\eqref{eq:normal},
\begin{equation}\label{eq:phase-difference}
	\begin{aligned}
		&|P_{m,l}(x-h,y-h)-P_{m,l}(y-h,y-h)|\\
		&\qquad\le
		C|x-y|
		\sum_{\substack{|\alpha|=m\\|\beta|=l}}
		|a_{\alpha\beta}|
		\le C|x-y|.
	\end{aligned}
\end{equation}
Thus, using $|e^{ia}-e^{ib}|\le|a-b|$,
\[
\begin{aligned}
	&\left|S_Pf_\nu(x)-
	S_{R_h}\bigl(
	e^{iP_{m,l}(\cdot-h,\cdot-h)}f_\nu
	\bigr)(x)\right|\\
	&=\left|\int_{|x-y|<1}
	(e^{iP(x,y)}
	-e^{iR_h(x,y)}e^{iP_{m,l}(y-h,y-h)})
	\frac{\Omega(x-y)}{|x-y|^n}f_\nu(y)\,dy\right|\\
	&\qquad\le
	C\int_{|x-y|<1}
	\frac{|\Omega(x-y)|}{|x-y|^{n-1}}
	|f_\nu(y)|\,dy.
\end{aligned}
\]
Since
\[
\int_{|z|<1}
\frac{|\Omega(z)|}{|z|^{n-1}}\,dz
=\norm\Omega_1,
\]
Fubini's theorem gives
\[
\begin{aligned}
	&\int_{B(h,1/4)}
	\left|S_Pf_\nu(x)-
	S_{R_h}\bigl(
	e^{iP_{m,l}(\cdot-h,\cdot-h)}f_\nu
	\bigr)(x)\right|\,dx\\
	&\qquad\le C\norm\Omega_1\norm{f_\nu}_1.
\end{aligned}
\]

By the induction hypothesis and
Lemma~\ref{lem:localtrunc},
\[
\weak{S_{R_h}u}\le CA\norm u_1.
\]
Since multiplication by
$e^{iP_{m,l}(y-h,y-h)}$ preserves the
$L^1$ norm, Chebyshev's inequality yields
\[
\begin{aligned}
	&\left|
	\left\{
	x\in B(h_\nu,1/4):
	|S_Pf_\nu(x)|>\lambda
	\right\}
	\right|
	\\
	&\quad\le
	\left|
	\left\{
	x:
	\left|
	S_{R_h}\bigl(
	e^{iP_{m,l}(\cdot-h,\cdot-h)}f_\nu
	\bigr)(x)
	\right|>\frac{\lambda}{2}
	\right\}
	\right|
	\\
	&\qquad+
	\frac{2}{\lambda}
	\int_{B(h_\nu,1/4)}
	\left|
	S_Pf_\nu(x)
	-
	S_{R_h}\bigl(
	e^{iP_{m,l}(\cdot-h,\cdot-h)}f_\nu
	\bigr)(x)
	\right|\,dx
	\\
	&\quad\le
	\frac{C(A+\norm\Omega_1)}{\lambda}\norm{f_\nu}_1
	\\
	&\quad\le
	\frac{CA}{\lambda}\norm{f_\nu}_1.
\end{aligned}
\]
Summing over $\nu$ and using the bounded
overlap of the enlarged balls, we obtain
\[
\sup_{\lambda>0}
\lambda\left|\{x:|S_Pf(x)|>\lambda\}\right|
\le CA\norm f_1.
\]
Together with \eqref{eq:local-smooth-error},
this gives
\begin{equation}\label{eq:T0weak}
	\sup_{\lambda>0}
	\lambda\left|\{x:|T_{\le0}f(x)|>\lambda\}\right|
	\le CA\norm f_1.
\end{equation}

\subsection{Bad functions and truncation by kernel size}
By \eqref{eq:T0weak}, it remains to estimate $T_{>0}$.
Fix $\lambda>0$ and set $\tau=\lambda/A$.
Applying Lemma~\ref{lem:adapted-CZ} with
$\vartheta_Q(y)=P(c_Q,y)$, we write
$f=g+b$, where $b=\sum_Qb_Q$ and
\begin{equation}\label{eq:adaptg}
	g(y)=e^{-iP(c_Q,y)}
	\frac1{|Q|}\int_Qe^{iP(c_Q,z)}f(z)\,dz
	\quad(y\in Q),\qquad b_Q=(f-g)\one_Q.
\end{equation}
Outside the bad cubes, set $g=f$. Then
\begin{equation}\label{eq:CZ}
	\begin{gathered}
		\norm g_\infty\le C\tau,\qquad
		\norm g_1\le\norm f_1,\qquad
		\norm{b_Q}_1\le C\tau|Q|,\\
		\sum_Q|Q|\le\tau^{-1}\norm f_1,\qquad
		\sum_Q\norm{b_Q}_1\le2\norm f_1.
	\end{gathered}
\end{equation}
Moreover,
\begin{equation}\label{eq:cancel}
	\int_Qe^{iP(c_Q,y)}b_Q(y)\,dy=0.
\end{equation}

By the $L^2$ boundedness theorem of
Jiang--Lu~\cite{JL1995} and Chebyshev's inequality,
\begin{equation}\label{eq:good}
	\begin{aligned}
		|\{x:|T_{>0}g(x)|>\lambda/4\}|
		&\le\frac{C}{\lambda^2}\norm{T_{>0}g}_2^2\\
		&\le\frac{CA^2}{\lambda^2}\norm g_2^2\\
		&\le\frac{CA^2\tau}{\lambda^2}\norm f_1
		=\frac{CA}{\lambda}\norm f_1.
	\end{aligned}
\end{equation}
Thus it remains to prove
\begin{equation}\label{eq:bad-reduction}
	|\{x:|T_{>0}b(x)|>\lambda\}|
	\le CA\lambda^{-1}\norm f_1.
\end{equation}

For $k\in\Z$, set
\[
B_k=\sum_{\ell(Q)=2^k}b_Q.
\]
Fix a sufficiently large $s_*>3$, and let $E$
be the union of the $C2^{s_*+10}$ dilates
of all bad cubes. By \eqref{eq:CZ},
\begin{equation}\label{eq:Esize}
	|E|\le C\sum_Q|Q|
	\le CA\lambda^{-1}\norm f_1.
\end{equation}
If $\ell(Q)=2^{j-s}$ with $s<s_*$, then
the annular support of $T_j$ implies
\[
|x-y|\le2^{j+1}
=2^{s+1}\ell(Q)
\le2^{s_*+1}\ell(Q),
\qquad y\in Q.
\]
Thus the support of $T_jb_Q$ is contained in the
chosen dilate of $Q$, and $T_jb_Q$ vanishes on $E^c$. 
Hence
\[
T_{>0}b
=\sum_{s\ge s_*}\sum_{j>0}T_jB_{j-s}
\qquad\text{on }E^c.
\]

We divide the positive scales into $j>\varrho s$
and $1\le j\le\varrho s$. Set
\begin{equation}\label{eq:parameters}
	\varrho=\frac1{800D(n+2)^3},
	\qquad N_0=4n+6,
\end{equation}
and choose
\begin{equation}\label{eq:sigma}
	0<\sigma<\frac1{800N_0(n+2)^3}.
\end{equation}
Let $\kappa_1,\kappa_2,\kappa_3,\kappa_4>0$
be the decay exponents obtained below, independently
of $\Omega$. We then choose
\begin{equation}\label{eq:eta}
	0<\eta_0<
	\frac18\min\{\kappa_1,\kappa_2,\kappa_3,\kappa_4\}.
\end{equation}

We now truncate $\Omega$ according to its size.
The case $\norm\Omega_1=0$ is trivial.
For $s\ge s_*$, set
\begin{equation}\label{eq:Omega-s}
	\Omega_s
	=\Omega\one_{\{|\Omega|\le2^{\eta_0s}\norm\Omega_1\}},
	\qquad
	\nu_s=\Omega-\Omega_s.
\end{equation}
Then
\begin{equation}\label{eq:Omega-s-infty}
	\norm{\Omega_s}_\infty
	\le2^{\eta_0s}\norm\Omega_1.
\end{equation}
Writing $T_j=T_{j,\Omega_s}+T_{j,\nu_s}$,
we first estimate the terms involving $\nu_s$.

By Fubini's theorem and polar coordinates,
\begin{equation}\label{eq:ring-L1}
	\norm{T_{j,\nu_s}h}_1
	\le C\norm{\nu_s}_1\norm h_1.
\end{equation}
Moreover, Tonelli's theorem gives
\begin{equation}\label{eq:angular-tail}
	\begin{aligned}
		\sum_{s\ge s_*}\norm{\nu_s}_1
		&=\int_{\Sph^{n-1}}|\Omega(\theta)|
		\operatorname{card}\left\{s\ge s_*:
		2^{\eta_0s}\norm\Omega_1<|\Omega(\theta)|
		\right\}d\sigma(\theta)\\
		&\le C\int_{\Sph^{n-1}}|\Omega(\theta)|
		\log^+\frac{|\Omega(\theta)|}{\norm\Omega_1}
		\,d\sigma(\theta)\\
		&\le CA.
	\end{aligned}
\end{equation}
Here the last inequality follows from the definition
of $A$ and
$\norm\Omega_1\log^+(1/\norm\Omega_1)\le C$.

Since
\[
\sum_{j>0}\norm{B_{j-s}}_1
\le\sum_Q\norm{b_Q}_1
\le2\norm f_1,
\]
we obtain
\begin{equation}\label{eq:large-angular}
	\begin{aligned}
		\sum_{s\ge s_*}\sum_{j>0}
		\norm{T_{j,\nu_s}B_{j-s}}_1
		&\le C\sum_{s\ge s_*}\norm{\nu_s}_1
		\sum_{j>0}\norm{B_{j-s}}_1\\
		&\le CA\norm f_1.
	\end{aligned}
\end{equation}

\subsection{Estimates at high and low positive scales and completion of the main theorem}

By \eqref{eq:large-angular}, it remains to estimate
the terms involving $\Omega_s$. We first establish
the following $L^2$ estimate for $j>\varrho s$.

\begin{theorem}\label{prop:high}
There exists $\kappa_1>0$, depending only on the fixed parameters, such that, for every finite set
	$\mathcal J\subset\{j>\varrho s\}$,
	\begin{equation}\label{eq:prop-high}
		\norm{\sum_{j\in\mathcal J}T_{j,\Omega_s}B_{j-s}}_2^2
		\le C\tau\norm\Omega_1^2\,2^{-(\kappa_1-2\eta_0)s}
		\sum_{j\in\mathcal J}\norm{B_{j-s}}_1.
	\end{equation}

\end{theorem}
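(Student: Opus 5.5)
We expand the square and reduce the estimate to a bound on each correlation term $\langle T_{j,\Omega_s}B_{j-s},T_{j',\Omega_s}B_{j'-s}\rangle$. Writing $F=\sum_{j\in\mathcal J}T_{j,\Omega_s}B_{j-s}$, we have
\[
\norm F_2^2\le \sum_{j,j'\in\mathcal J}\bigl|\langle T_{j,\Omega_s}B_{j-s},T_{j',\Omega_s}B_{j'-s}\rangle\bigr|,
\]
and each inner product equals $\iint B_{j-s}(y)\overline{B_{j'-s}(z)}\,\mathcal K_{j,j'}(y,z)\,dy\,dz$. Here the correlation kernel is
\[
\mathcal K_{j,j'}(y,z)=\int e^{i(P(x,y)-P(x,z))}\Omega_s\Bigl(\tfrac{x-y}{|x-y|}\Bigr)\overline{\Omega_s\Bigl(\tfrac{x-z}{|x-z|}\Bigr)}k_j(x-y)k_{j'}(x-z)\,dx .
\]
For $|j-j'|$ large the annuli separate, so the kernel is supported on $|y-z|\sim 2^{\max(j,j')}$. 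We plan to show that, outside an exceptional set,
\[
|\mathcal K_{j,j'}(y,z)|\le C\norm{\Omega_s}_\infty^2\,2^{-n\max(j,j')}\,2^{-\kappa(s+|j-j'|)} .
\]
We will then sum using $\norm{B_{j'-s}}_\infty$-type control. The bad functions are not bounded, but each cube carries mass $\le C\tau|Q|$, so $\sup_y\int|\mathcal K||B_{j'-s}(z)|dz\lesssim \tau\sup\int|\mathcal K|\,dz$ after averaging $\mathcal K$ over cubes of side $2^{j'-s}$, which are much smaller than $2^{j'}$. Together with $\norm{\Omega_s}_\infty^2\le 2^{2\eta_0 s}\norm\Omega_1^2$ from \eqref{eq:Omega-s-infty}, this gives the factor $\tau\norm\Omega_1^2 2^{-(\kappa_1-2\eta_0)s}\norm{B_{j-s}}_1$. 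The extra $2^{-\kappa|j-j'|}$ makes the double sum over $j,j'$ converge.

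To bound $\mathcal K$ we use the two-center coordinates described in the outline. We fix $y\neq z$, pass to polar coordinates around the line through $y$ and $z$, and write $x=z+r(pe+t\theta)$ with $u=p/t$ and $v=(p-H)/(Lt)$. The two rough factors then become $a_\theta(v)$ and $b_\theta(u)$. For fixed $\theta$ and fixed dyadic size of $t$ (and $r$), the integral becomes $\iint a_\theta(v)\overline{b_\theta(u)}e^{i\Phi_\theta(u,v)}\chi(u,v)J\,du\,dv$, with a smooth cutoff $\chi$ and a smooth Jacobian $J$. By Cauchy–Schwarz this is bounded by $\norm{a_\theta}_2\norm{b_\theta}_2\norm{S_\theta}_{2\to2}$, where $S_\theta$ is the one-dimensional oscillatory operator with kernel $e^{i\Phi_\theta}\chi J$. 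Integrating $\norm{a_\theta}_2\norm{b_\theta}_2$ over $\theta$ and using the $L^\infty$ bound returns $\norm{\Omega_s}_\infty^2$ times the measure of the sphere. On the region where $|\partial_u\partial_v\Phi_\theta|\ge\mu$, the $TT^*$ argument with van der Corput applied only to smooth amplitudes gives $\norm{S_\theta}\lesssim\mu^{-1/2}$. This is Hörmander-type decay; we lose only derivatives of $\chi J$, which are $O(1)$ after the dyadic cutoffs.

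The main obstacle is the lower bound for $\mu$. We will use the identity $\partial_u\partial_v\Phi_\theta=-\frac{Lt^2}{H^2}(E_H+1)E_0q_\theta$ together with the triangular relation $(E_H+1)E_0q_k=k(k+1)q_k-kH\partial_pq_k$. We will show that the coefficient norm of $(E_H+1)E_0q_\theta$ is comparable to that of the mixed part of $q_\theta$ in the scaled variables, up to constants depending on $D$. Since $j>\varrho s$, the normalization \eqref{eq:normal} gives top mixed coefficients of size $\gtrsim 2^{(m+l)j}|y-z|$-type quantities, which are at least $2^{c\varrho s}$ for most directions $e,\theta$. To justify "most", we apply Lemma~\ref{lem:CW} in the $(e,\theta)$ variables and in $(p,t)$ with a log-concave (uniform on convex sets) measure. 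This shows that the sublevel set where $|(E_H+1)E_0q_\theta|\le 2^{c\varrho s/2}$ has relative measure $\le C2^{-c's}$. The contribution of that degenerate set is estimated trivially by $\norm{\Omega_s}_\infty^2$ times its measure, but it must be integrated against whole bad cubes. So we enlarge the degenerate set to include every cube of side $2^{j'-s}$ meeting it, which costs only a bounded factor since the set is a union of boundedly many polynomial sublevel slabs at scale $\gg 2^{j'-s}$. Choosing $\kappa_1$ as the minimum of the van der Corput exponent $c\varrho/4$ and the Carbery–Wright exponent $c'/D$ completes the plan.
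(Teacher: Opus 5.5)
\textbf{Main gap: summing over pairs of scales.} Your ingredients are essentially the paper's: expanding the square, the two-center coordinates $u=p/t$, $v=(p-H)/(Lt)$, the identity $\partial_u\partial_v\Phi_\theta=-\frac{Lt^2}{H^2}(E_H+1)E_0q_\theta$, a $TT^*$ bound for the one-dimensional operator with the rough factors kept in $L^2$, and sublevel sets thickened by whole cubes. The summation over pairs of scales, however, does not work. It rests on a pointwise kernel bound with an extra factor $2^{-\kappa|j-j'|}$, and the two-center argument cannot produce it. Write $r=2^{\min(j,j')}$, $R=2^{\max(j,j')}$ and $d=|y-z|\sim R$. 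The preserved top component of $S_\theta$ gives only $\Lambda_\theta=\frac{L}{H^2}\|S_\theta\|_{\coef}\gtrsim r^{m+1}A_m/d$. If $l=1$, the normalization forces $A_m\le d$, so this is at most $r^{m+1}$, with no power of $R/r$. This is not merely a lossy bound. For $P(x,y)=x_1y_2-x_2y_1$ in $\R^2$ one computes $q_\theta=\mathrm{const}\pm rdt$, $S_\theta=\pm2rdt$, and $\partial_u\partial_v\Phi_\theta=\mp2r^2t^3$, which is independent of $R$. So no argument driven by the size of the mixed derivative yields decay in $|j-j'|$. Without it, your per-pair bound of order $\tau2^{-\kappa s}\norm{B_{j-s}}_1$ must be summed over $j'\in\mathcal J$ and picks up a factor $\#\mathcal J$, which is not uniform.

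\textbf{How the paper sums instead.} It needs no decay in $|j-j'|$. It uses $G_{kj}(z,y)=\overline{G_{jk}(y,z)}$ to keep only $k\le j$, fixes the center $z$ attached to the \emph{larger} scale $R=2^j$, and treats all smaller scales $\varrho s<k\le j$ at once. The kernel bound $C2^{2\eta_0s}\norm\Omega_1^2R^{-n}$ depends only on $R$, and the Calder\'on--Zygmund cubes are disjoint across all sidelengths. Hence $\sum_{k\le j}\int_{B(z,CR)}|B_{k-s}|\le C\tau R^n$, and what remains is $\sum_j\norm{B_{j-s}}_1$. Fixing the smaller-scale center and summing over larger scales instead gives $O(\tau)$ per scale and diverges.

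\textbf{Where the exceptional set must go.} The sublevel sets in $(p,t)$ and $\theta$ that you describe are handled pointwise in $(y,z)$, inside the $x$-integral, and need no thickening. What must be thickened by whole cubes is a set of $y$'s for fixed $z$, consisting of two pieces.
\begin{enumerate}
\item The near-diagonal set $\{|y-z|\le R2^{-\beta s}\}$. Here the loss $(1+r/d)^N$, coming from $K_H=1+H^{-1}$, destroys the estimate.
\item The set $\{|q(y)-q(z)|\le R2^{-\beta s}\}$, where $q=q_{\alpha_*}$ is a single top-block polynomial whose degree-$l$ coefficients sum to at least $c$.
\end{enumerate}
Off these sets $A_m\ge|q(y)-q(z)|>R2^{-\beta s}$. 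Moreover $u\mapsto q(z+Ru)-q(z)$ has coefficient norm at least $cR^l\ge cR$. This makes the thickened bound $C(2^{-\beta s}+2^{-s})^{1/D}$ uniform in the coefficients of $P$ and valid simultaneously for cubes of every sidelength $\le R2^{-s}$. The paper proves this via a gradient bound on a fixed box, not by counting slabs. Then $\beta$ must be chosen small relative to $c(m+1)\varrho$, so that $2^{(N+c)\beta s}$ is beaten by $r^{-c(m+1)}\le2^{-c(m+1)\varrho s}$. The gain comes from the smaller scale $r>2^{\varrho s}$, not from $2^{(m+l)j}$.

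\textbf{Smaller inaccuracies.} The cutoffs do not have $O(1)$ derivatives. The cutoff to $\{t\ge\varepsilon,\ |S_\theta|\ge\eta B_\theta\}$ has $v$-derivatives of size $K_H(\varepsilon^{-1}+\eta^{-1})$. Constant sign of $\Phi_{uv}$ also forces localization to rectangles of side about $\varepsilon\eta K_H^{-1}$. These polynomial losses are absorbed by choosing $\varepsilon=\eta=\Lambda_\theta^{-1/256}$, but they must be tracked. Finally, comparability of $\|S_\theta\|_{\coef}$ with the mixed part of $q_\theta$ holds only for bounded $H$. For separated scales only the top-degree component is available.
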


For $1\le j\le\varrho s$, we use the following
decomposition, in which the main term is controlled
in $L^2$ and the two remainders in $L^1$.

\begin{theorem}\label{prop:lowfull}
Let $s\ge s_*$ and $1\le j\le\varrho s$. There exist functions
$G_{j,s}$, $E_{j,s}$, and $\mathcal R_{j,s}$ such that
	\begin{equation}\label{eq:prop-reduction}
		T_{j,\Omega_s}B_{j-s}=G_{j,s}+E_{j,s}+\mathcal R_{j,s}.
	\end{equation}
Moreover, for positive constants $\kappa_3,\kappa_4$ depending only on the
fixed parameters,
	\begin{align}
		\norm{G_{j,s}}_2^2
		&\le C\tau\norm\Omega_1^2\,2^{-(\kappa_3-2\eta_0)s}\norm{B_{j-s}}_1,
		\label{eq:prop-main}\\
		\norm{E_{j,s}}_1
		&\le C\norm\Omega_1\,2^{-(\kappa_4-\eta_0)s}\norm{B_{j-s}}_1,
		\label{eq:prop-rem}
	\end{align}
and
	\begin{equation}\label{eq:prop-reduction-error}
		\norm{\mathcal R_{j,s}}_1
		\le C\norm\Omega_1\,2^{-(\kappa_2-\eta_0)s}\norm{B_{j-s}}_1,
		\qquad\kappa_2=1-N_0\sigma>0.
	\end{equation}
\end{theorem}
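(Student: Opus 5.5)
The plan is to build the decomposition in \eqref{eq:prop-reduction} by localizing at scale $R=2^j$, freezing the one-variable phases, and expanding the mixed phase in Fourier series, following the second method sketched in the introduction.

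\emph{Localization.} Cover $\R^n$ by cubes $I_h$ of side $R$ with centers $x_h$. Assign each bad cube $Q$ with $\ell(Q)=2^{j-s}$ to the unique $I_h$ containing $c_Q$, and set $B^h=\sum_{Q\to h}b_Q$. Then $T_{j,\Omega_s}B^h$ is supported in $\{|x-x_h|\le CR\}$, and these supports overlap boundedly. On $\{|x-x_h|,|y-x_h|\le CR\}$ we write
\[
P(x,y)=P(x,x_h)+P(x_h,y)-P(x_h,x_h)+\Pi_h(x,y).
\]
The factor $e^{iP(x,x_h)}$ is unimodular in $x$ and can be pulled out. The factor $e^{i[P(x_h,y)-P(x_h,x_h)]}$ is absorbed into the bad function.

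\emph{Splitting by the size of $\mathfrak M_h$.} Let $\mathcal H_{\rm small}=\{h:\mathfrak M_h\le 2^{\sigma s}\}$ and $\mathcal H_{\rm large}$ be its complement.

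For $h\in\mathcal H_{\rm large}$, the term is placed in $G_{j,s}$. We run the equal-scale two-center argument of Theorem~\ref{prop:high}, whose proof I take as available and as yielding the estimate with decay in $\mathfrak M_h$ rather than in $j$. Via $TT^*$, the change of variables separating the two rough factors, and the triangular identity for $(E_H+1)E_0q_k$, one gets a correlation decay of the form $(1+\mathfrak M_h)^{-\kappa}\norm{\Omega_s}_\infty^2$, up to a Carbery--Wright degenerate set. Since $\mathfrak M_h>2^{\sigma s}$, this gives $2^{-\sigma\kappa s}$, and $\norm{\Omega_s}_\infty\le2^{\eta_0 s}\norm\Omega_1$ accounts for the $2^{2\eta_0s}$. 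The $L^2$ bound then follows from $\norm{B^h}_\infty$-type control: $\norm{b_Q}_1\le C\tau|Q|$ gives $\norm{B^h}_1\le C\tau R^n$ on the enlarged cube, so pairings are bounded by $\tau\norm{B^h}_1$. Bounded overlap lets us sum over $h$.

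For $h\in\mathcal H_{\rm small}$, insert smooth cutoffs and expand
\[
e^{i\Pi_h}=\sum_\gamma c^h_\gamma e^{i\pi\gamma_1\cdot(x-x_h)/(L_0R)}e^{i\pi\gamma_2\cdot(y-x_h)/(L_0R)},
\]
with $\sum_\gamma|c_\gamma^h|(1+|\gamma_2|+\mathfrak M_h)\le C(1+\mathfrak M_h)^{N_0}$. This is obtained by integrating by parts $N_0$ times, using that the derivatives of $\Pi_h$ on the box are $\lesssim\mathfrak M_h$. Each mode becomes a convolution operator $K_j*\bigl(\sum_Q f_{Q,\gamma_2}\bigr)$ up to unimodular factors. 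Write $f_{Q,\gamma_2}=(f_{Q,\gamma_2}-m_Q\one_Q/|Q|)+m_Q\one_Q/|Q|$, where $m_Q=\int f_{Q,\gamma_2}$.

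\emph{The mean-correction term.} Define $\mathcal R_{j,s}$ to be the sum of the contributions of the $m_Q\one_Q/|Q|$ parts. Comparing $e^{i[P(x_h,y)-P(x_h,x_h)]+i\pi\gamma_2\cdot(y-x_h)/(L_0R)}$ with $e^{iP(c_Q,y)}$ up to a $y$-constant, the difference in phase on $Q$ is controlled by $\nabla_y$ of $\Pi_h(c_Q,\cdot)$ and of the mode, which is $\lesssim(\mathfrak M_h+|\gamma_2|)\ell(Q)/R$. This gives $|m_Q|\le C2^{-s}(|\gamma_2|+\mathfrak M_h)\norm{b_Q}_1$. The $L^1$ norm of the ring operator is then at most $C\norm{\Omega_s}_1$ times this. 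Summing over $\gamma$ gives a factor $(1+\mathfrak M_h)^{N_0}\le2^{N_0\sigma s}$, so the total is $2^{-(1-N_0\sigma)s}$ as required, with $\norm{\Omega_s}_\infty$ not even needed.

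\emph{The mean-zero parts.} These are handled by Seeger's microlocal decomposition applied to $K_{j,\Omega_s}$ acting on mean-zero functions supported on cubes of side $2^{j-s}$. We split off the directional projections (Section~\ref{sec:microlocal}). The nonoscillatory part lies in $L^2$ with gain $2^{-s/4}\norm{\Omega_s}_\infty$-type bounds and is put in $G_{j,s}$. The remainder has $L^1$ gain $2^{-s/4}$ times a power of $2^{\eta_0 s}$, and goes in $E_{j,s}$. Multiplying by $\sum_\gamma|c_\gamma^h|\le2^{N_0\sigma s}$ preserves decay since $N_0\sigma<1/800$, and the unimodular mode factors do not affect norms. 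For $G$, orthogonality across $\gamma$ is not needed: the triangle inequality in $L^2$ suffices.

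\emph{Main obstacle.} The step I expect to be hardest is the large-$\mathfrak M_h$ case. It requires a two-center estimate whose decay is quantified by the mixed coefficients at the \emph{same} scale, uniformly in the lower-order terms. In particular, the Carbery--Wright degenerate set must be bounded using only the normalization of $\mathfrak M_h$, and it must be absorbed together with all bad cubes meeting it.
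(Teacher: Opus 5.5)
Your proposal is correct and follows essentially the same route as the paper. The shared steps are: localization at scale $R=2^j$ with the splitting $P=P(x_h,x_h)+A_h+C_h+\Pi_h$; the dichotomy $\mathfrak M_h\le 2^{\sigma s}$ versus $\mathfrak M_h>2^{\sigma s}$, where the large case uses the equal-scale two-center estimate off an exceptional set thickened by whole bad cubes (its threshold $\mathfrak M_h2^{-\beta s}$ has to be balanced against the $(1+R/d)^N$ loss); the weighted Fourier expansion; the mean correction $|m_{Q,\gamma_2}|\le C2^{-s}(|\gamma_2|+\mathfrak M_h)\norm{b_Q}_1$; and Seeger's Lemmas 2.1--2.2 applied to the corrected functions.

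The differences are cosmetic. The paper restricts outputs to the disjoint cubes $I_h$ and attaches to each a boundedly overlapping family $\mathcal Q_h$ of nearby bad cubes. You instead partition the bad cubes by their centers and sum over $h$ using bounded overlap of the $x$-supports, so you must keep the cutoff $\chi_h^x$ on the main term, since $\Gamma_j^s*\widetilde F_{\gamma_2}$ is not compactly supported. Your remark that $\norm{\HH_{j,s}}_1\le C\norm\Omega_1$ correctly shows that the factor $2^{\eta_0 s}$ in the mean-correction bound is unnecessary.
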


Theorem~\ref{prop:high} is proved in
Section~\ref{sec:high}, while Theorem~\ref{prop:lowfull}
is proved in Section~\ref{sec:reduction}, using the
estimates established in Section~\ref{sec:microlocal}.

We now complete the proof of Theorem~\ref{thm:main}.
By \eqref{eq:prop-reduction}, on $E^c$ we have

\begin{align}\label{u}
	T_{>0}b
	={}&\sum_{s\ge s_*}\sum_{j>\varrho s}
	T_{j,\Omega_s}B_{j-s}\\
	&+\sum_{s\ge s_*}\sum_{1\le j\le\varrho s}
	(G_{j,s}+E_{j,s}+\mathcal R_{j,s})\notag\\
	&+\sum_{s\ge s_*}\sum_{j>0}
	T_{j,\nu_s}B_{j-s}.\notag
\end{align}

By \eqref{eq:eta} , \eqref{eq:prop-high} , and
Minkowski's inequality,
\begin{equation}\label{eq:high-final}
	\begin{aligned}
		\norm{\sum_{s\ge s_*}\sum_{j>\varrho s}
			T_{j,\Omega_s}B_{j-s}}_2
		&\qquad\le
		C\sqrt{\tau\norm f_1}\norm\Omega_1
		\sum_{s\ge s_*}2^{-(\kappa_1/2-\eta_0)s}\\
		&\qquad\le C\sqrt{\lambda A\norm f_1}.
	\end{aligned}
\end{equation}
For the low positive scales, Cauchy--Schwarz
and \eqref{eq:prop-main} give
\begin{equation}\label{eq:low-main-total}
	\begin{aligned}
		\norm{\sum_{s\ge s_*}
			\sum_{1\le j\le\varrho s}G_{j,s}}_2
		&\qquad\le
		C\sqrt{\tau\norm f_1}\norm\Omega_1
		\sum_{s\ge s_*}\sqrt{1+s}\,
		2^{-(\kappa_3/2-\eta_0)s}\\
		&\qquad\le C\sqrt{\lambda A\norm f_1}.
	\end{aligned}
\end{equation}
Here we used $\tau=\lambda/A$,
$\norm\Omega_1\le CA$, and
$\sum_j\norm{B_{j-s}}_1\le2\norm f_1$.
Similarly, \eqref{eq:prop-rem} and
\eqref{eq:prop-reduction-error} yield
\begin{equation}\label{eq:R-total}
	\begin{aligned}
		&\sum_{s\ge s_*}\sum_{1\le j\le\varrho s}
		\bigl(\norm{E_{j,s}}_1+
		\norm{\mathcal R_{j,s}}_1\bigr)\\
		&\qquad\le C\norm\Omega_1\norm f_1
		\sum_{s\ge s_*}
		\left(
		2^{-(\kappa_4-\eta_0)s}
		+2^{-(\kappa_2-\eta_0)s}
		\right)\\
		&\qquad\le CA\norm f_1.
	\end{aligned}
\end{equation}
Hence, by \eqref{eq:large-angular},\eqref{u},
\eqref{eq:high-final}--\eqref{eq:R-total},
and Chebyshev's inequality,
\[
|\{x\in E^c:|T_{>0}b(x)|>\lambda/2\}|
\le CA\lambda^{-1}\norm f_1.
\]
Together with \eqref{eq:T0weak},
\eqref{eq:good}, and \eqref{eq:Esize},
this gives
\begin{equation}\label{eq:target-final}
	\lambda|\{x:|T_Pf(x)|>\lambda\}|
	\le CA\norm f_1.
\end{equation}
This completes the proof of Theorem~\ref{thm:main}.

\section{Proof of Theorem \ref{prop:high}}\label{sec:high}

\subsection{Polynomial sublevel sets and entire cubes}
\label{subsec:high-sublevel}

The following lemma gives a polynomial sublevel set
estimate and extends it to the union of small cubes
intersecting the sublevel set. The latter is needed
since the mass bounds for bad functions are available
only on entire cubes. We also record the comparability
of the non-constant coefficient seminorm under bounded
translations.

\begin{lemma}\label{lem:sublevel}
	Let $U\subset\R^n$ be a fixed bounded box with nonempty
	interior, let $q\ne0$ be a real polynomial of degree
	at most $D\ge1$, and set $B=\norm q_\coef$.
	Then, for $0<t<1$,
	\begin{equation}\label{eq:sublevel}
		\abs{\{u\in U:|q(u)|\le tB\}}\le Ct^{1/D}.
	\end{equation}
	If a family of axis-parallel cubes has sidelengths
	at most $\ell\le1$, each cube intersects
	$\{u\in U:|q(u)|\le tB_0\}$, and $0<B_0\le CB$,
	then
	\begin{equation}\label{eq:thick}
		\abs{\bigcup_Q Q}\le C(t+\ell)^{1/D}.
	\end{equation}
	Moreover, for any polynomial $q$ of degree at most
	$D$ and any $|z|\le C$,
	\begin{equation}\label{eq:translation}
		\norm{q(z+\cdot)-q(z)}_\nc
		\asymp\norm q_\nc.
	\end{equation}
\end{lemma}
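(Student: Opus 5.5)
For \eqref{eq:sublevel} I would apply the Carbery--Wright inequality (Lemma~\ref{lem:CW}) with $\mu$ the normalized Lebesgue measure on $U$. This measure is log-concave because $U$ is a box, hence convex. The only input needed is a lower bound
\[
\int_U|q|\,d\mu\ge c\norm q_\coef .
\]
To get it, note that on the finite-dimensional space of polynomials of degree at most $D$, both $q\mapsto\int_U|q|\,d\mu$ and $\norm q_\coef$ are norms. The first is a norm because a nonzero polynomial cannot vanish on a set of positive measure. Since all norms on a finite-dimensional space are equivalent, we get $\int_U|q|\,d\mu\asymp B$, with constants depending only on $n,D,U$. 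Lemma~\ref{lem:CW} with $a=tB$ then gives
\[
\mu\{|q|\le tB\}\le CD\,(tB/(cB))^{1/\deg q}\le Ct^{1/D},
\]
using $t<1$ and $\deg q\le D$. The constant-polynomial case is trivial: then $|q|=B$, so the set is empty for $t<1$.

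For \eqref{eq:thick}, let $Q$ be one of the cubes and $u_0\in Q$ a point with $|q(u_0)|\le tB_0\le CtB$. Every point $u\in Q$ satisfies $|u-u_0|\le\sqrt n\,\ell$. Since $\ell\le1$, such $u$ lie in a fixed enlargement $U'$ of $U$. On $U'$ the gradient is controlled by the coefficients: $|\nabla q|\le C\norm q_\coef=CB$. Hence
\[
|q(u)|\le CtB+CB\ell=C(t+\ell)B .
\]
So $\bigcup_Q Q$ is contained in $\{u\in U':|q(u)|\le C(t+\ell)B\}$. Applying \eqref{eq:sublevel} on the box $U'$ bounds the measure of this set by $C(t+\ell)^{1/D}$. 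This assumes $C(t+\ell)<1$; otherwise the bound is trivial, since $|U'|\le C\le C(t+\ell)^{1/D}$.

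For \eqref{eq:translation}, write $q=q_0+\tilde q$ with $\tilde q$ the non-constant part. Then
\[
q(z+\cdot)-q(z)=\tilde q(z+\cdot)-\tilde q(z),
\]
so only $\tilde q$ matters. The map $S_z:\tilde q\mapsto\tilde q(z+\cdot)-\tilde q(z)$ is a linear map on the space $V$ of polynomials of degree at most $D$ with zero constant term. Its entries are polynomial in $z$, so $\norm{S_z}\le C$ uniformly for $|z|\le C$. It is invertible on $V$ with inverse $S_{-z}$, which gives the reverse bound uniformly. Since $\norm\cdot_\nc$ is a norm on $V$, this yields the two-sided comparability.

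The main obstacle is only bookkeeping: making sure the constant in the norm equivalence for \eqref{eq:sublevel} is uniform for the enlarged box $U'$ used in \eqref{eq:thick}. Both are fixed, so this is harmless.
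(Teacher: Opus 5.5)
Your proposal is correct and follows the paper's proof essentially step for step. You use norm equivalence together with Carbery--Wright for \eqref{eq:sublevel}, a fixed enlarged box with the bound $|\nabla q|\le CB$ for \eqref{eq:thick}, and the inverse translation $S_{-z}$ (the paper's $\tau_{-z}\tau_z q=q-q(0)$) to obtain the two-sided comparability of $\norm{\cdot}_\nc$.
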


\begin{proof}
	By equivalence of norms on the finite-dimensional
	space of polynomials of degree at most $D$,
	\[
	\int_U|q(u)|\,du\asymp B.
	\]
	The normalized Lebesgue measure $\mu_U=|U|^{-1}\one_Udu$ on $U$ is a log-concave probability measure.
	If $d_q=\deg q\ge1$, Lemma~\ref{lem:CW}, with
	$\alpha=tB$, gives
	\[
	\begin{aligned}
		|\{u\in U:|q(u)|\le tB\}|
		&=|U|\mu_U\{|q|\le tB\}\\
		&\le Cd_q\left(
		\frac{tB}{\int|q|\,d\mu_U}
		\right)^{1/d_q}\\
		&\le Ct^{1/D},
	\end{aligned}
	\]
	where we used $d_q\le D$ and $0<t<1$.
	For a nonzero constant polynomial, the sublevel set
	is empty when $t<1$. This proves \eqref{eq:sublevel}.
	
	To prove \eqref{eq:thick}, let
	\[
	U^*=\{u:\dist_\infty(u,U)\le2\}.
	\]
	Every cube $Q$ in the statement is contained in
	$U^*$. Since $U^*$ is fixed,
	\[
	\norm{\nabla q}_{L^\infty(U^*)}\le CB.
	\]
	Choose
	$u_Q\in Q\cap\{u\in U:|q(u)|\le tB_0\}$.
	For every $u\in Q$,
	\[
	\begin{aligned}
		|q(u)|
		&\le |q(u_Q)|
		+|u-u_Q|\norm{\nabla q}_{L^\infty(U^*)}\\
		&\le C(t+\ell)B.
	\end{aligned}
	\]
	Thus
	\[
	\bigcup_QQ
	\subset U^*\cap\{u:|q(u)|\le C(t+\ell)B\}.
	\]
	Applying \eqref{eq:sublevel} on the fixed box $U^*$
	when $C(t+\ell)<1$, and using the volume of $U^*$
	otherwise, gives \eqref{eq:thick}.
	
	For \eqref{eq:translation}, the binomial formula
	\[
	(x+z)^\alpha=\sum_{\beta\le\alpha}\binom\alpha\beta z^{\alpha-\beta}x^\beta
	\]
	shows that $\tau_zq=q(z+x)-q(z)$ satisfies
	$\norm{\tau_zq}_\nc\le C\norm q_\nc$. Applying the identity
	$\tau_{-z}\tau_zq=q-q(0)$ gives the reverse inequality.
\end{proof}

To control degenerate directions in the subsequent analysis of two-dimensional sections, we extend the sublevel set estimate of Lemma \ref{lem:sublevel} to the sphere.

\begin{lemma}[Sublevel sets on the sphere]\label{lem:sphere}
Let $F$ be a nonzero restriction to $\Sph^{n-1}$ of a real polynomial of degree at most $D$, where $n\ge2$. Then
\begin{equation}\label{eq:sphere}
 \sigma\{\theta:|F(\theta)|\le t\norm F_\infty\}
 \le Ct^{1/(2D)},\qquad 0<t<1.
\end{equation}
\end{lemma}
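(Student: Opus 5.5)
We reduce the spherical estimate to the Euclidean sublevel bound of Lemma~\ref{lem:sublevel}. The exponent $1/(2D)$ indicates the intended route: parametrize patches of the sphere by graph coordinates and absorb the square root in the parametrization by squaring. Cover $\Sph^{n-1}$ by $2n$ caps $\Sigma_{i,\pm}=\{\theta:\pm\theta_i\ge 1/\sqrt n\}$. On $\Sigma_{i,+}$, say $i=n$, use the parametrization $\theta=(w,\sqrt{1-|w|^2})$ with $w\in W=\{|w|\le\sqrt{1-1/n}\}\subset\R^{n-1}$. On $W$ the surface Jacobian is bounded above and below. Hence it suffices to bound the Lebesgue measure of $\{w\in W:|F(w,\sqrt{1-|w|^2})|\le t\norm F_\infty\}$ by $Ct^{1/(2D)}$. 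We may also assume $\norm F_\infty$ is attained (up to a factor $2$) on the relevant cap after normalizing $\norm F_\infty=1$.

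Write $F(w,\xi)=A(w)+\xi B(w)$, which is legitimate after reducing powers of $\xi$ with $\xi^2=1-|w|^2$. Here $A,B$ are polynomials in $w$ of degree at most $D$. On the sublevel set we have $|A+\xi B|\le t$. The conjugate value $A-\xi B$ is the value of $F$ at the reflected point $(w,-\xi)$, so $|A-\xi B|\le \norm F_\infty=1$. Multiplying,
\[
|G(w)|\le t,\qquad G(w):=A(w)^2-(1-|w|^2)B(w)^2,
\]
and $G$ is a genuine polynomial of degree at most $2D+2$.

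Next we need a lower bound on $\norm G_\coef$, or on $\int_W|G|$, uniformly over the compact family $\{F:\norm F_\infty=1\}$. Each such $F$ lives in the finite-dimensional space of restrictions of degree-$\le D$ polynomials, and its unit sphere is compact. The functional $F\mapsto\int_W|G|$ is continuous on this sphere. It is positive provided $G\not\equiv0$. If $G\equiv0$, then $A^2=(1-|w|^2)B^2$. Since $1-|w|^2$ is not a square of a polynomial, this forces $A=B=0$, hence $F\equiv0$ on the whole sphere, a contradiction. Compactness then gives $\int_W|G|\ge c>0$. Applying Lemma~\ref{lem:CW} on $W$ (with normalized Lebesgue measure, which is log-concave) yields measure $\le C t^{1/(2D+2)}$.

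The degree count is the main obstacle, since it gives exponent $1/(2D+2)$ rather than $1/(2D)$. To fix it, we would instead avoid the square-root parametrization and use the homogeneous extension. If $F$ has degree $\le D$, split $F$ into even and odd parts, $F=F_e+F_o$. Multiply the homogeneous components by suitable powers of $|x|^2$ (equal to $1$ on the sphere) so that $F_e$ becomes a homogeneous polynomial of degree $D$ or $D-1$ on $\R^n$; do the same for $F_o$ with the other parity. We then use the product $F(\theta)F(-\theta)=F_e^2-F_o^2$, which is an even polynomial of degree at most $2D$. Its sublevel set contains both the original sublevel set and its antipodal image, because $|F(-\theta)|\le1$. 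Extend it homogeneously of degree $2D$ and pass to the shell $\{1\le|x|\le2\}$. The sublevel measure on the shell is comparable to the spherical measure by polar coordinates and homogeneity, with a loss of at most a factor $2^{2D}$ in $t$. Applying Lemma~\ref{lem:sublevel} on a box containing the shell, together with the same compactness argument for nondegeneracy, gives $Ct^{1/(2D)}$. The nondegeneracy here is simple: $F_e^2-F_o^2\equiv0$ on the sphere would give $F(\theta)F(-\theta)\equiv0$, so $F$ vanishes on an open set of the sphere and hence $F\equiv0$. If uniformity via compactness is delicate, we would bound $\int_{\text{shell}}|F_e^2-F_o^2|$ below directly by this contradiction argument applied to normalized $F$.
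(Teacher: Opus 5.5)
Your final (homogeneous-extension) argument is correct, but it takes a different route from the paper. The paper covers $\Sph^{n-1}$ by the two stereographic charts $\Psi_\pm(u)=\pm(2u,1-|u|^2)/(1+|u|^2)$, $u\in[-1,1]^{n-1}$, and clears denominators. The function $G_\pm=(1+|u|^2)^D F\circ\Psi_\pm$ is then a polynomial of degree at most $2D$. The map $F\mapsto G_\pm$ is linear and injective, so $\norm{G_\pm}_{\coef}\asymp\norm F_\infty$ by equivalence of norms. The sublevel set of $F$ then pulls back directly into a sublevel set of $G_\pm$, and Lemma~\ref{lem:sublevel} is applied with degree bound $2D$.

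You instead remove the non-polynomial part (the square root, or the odd part) by multiplying $F$ by its value at a reflected point, where $|F|\le\norm F_\infty$. This doubles the degree just as the rational parametrization does, so both arguments land on $t^{1/(2D)}$. Your version has the advantage of a single global homogeneous polynomial on $\R^n$, with only polar coordinates on a shell and no charts or Jacobians. The paper's version has the advantage that $F\mapsto G_\pm$ is linear, so nondegeneracy is immediate from injectivity. Your map $F\mapsto F(\theta)F(-\theta)$ is quadratic, so you genuinely need the compactness argument. You also need the fact that a nonzero polynomial restriction cannot vanish on an open subset of the connected analytic manifold $\Sph^{n-1}$; you invoke this correctly.

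Your first route does not actually lose anything. After reducing with $\xi^2=1-|w|^2$, a monomial $w^\alpha\xi^{2j+1}$ with $|\alpha|+2j+1\le D$ contributes $\xi\,w^\alpha(1-|w|^2)^j$. Hence $\deg B\le D-1$, and $G=A^2-(1-|w|^2)B^2$ has degree at most $2D$, not $2D+2$. The graph-coordinate argument with Carbery--Wright on $W$ therefore already gives $Ct^{1/(2D)}$, and the detour is optional. Your preliminary assumption that $\norm F_\infty$ is attained on the relevant cap is also unnecessary. The compactness bound $\int_W|G|\ge c\norm F_\infty^2$ holds uniformly for every nonzero $F$ and each of the finitely many caps.
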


\begin{proof}
	Let $U=[-1,1]^{n-1}$ and consider the two
	stereographic parametrizations
	\[
	\Psi_\pm(u)=\pm\frac{(2u,1-|u|^2)}{1+|u|^2},
	\qquad u\in U.
	\]
	Since $\Psi_+$ and $\Psi_-$ map the unit ball in
	$\R^{n-1}$ onto the closed upper and lower
	hemispheres, respectively,
	\[
	\Sph^{n-1}=\Psi_+(U)\cup\Psi_-(U).
	\]

Direct differentiation gives
\[
\left(
\partial_i\Psi_\pm(u)\cdot\partial_j\Psi_\pm(u)
\right)_{1\le i,j\le n-1}
=\frac{4}{(1+|u|^2)^2}I_{n-1}.
\]
Hence the surface Jacobian is
\begin{equation}\label{eq:sphere-density}
	J(u)=
	\sqrt{\det\left(
		\left(\partial_i\Psi_\pm(u)\cdot
		\partial_j\Psi_\pm(u)\right)_{i,j=1}^{n-1}
		\right)}
	=\left(\frac{2}{1+|u|^2}\right)^{n-1}
	\le 2^{n-1}.
\end{equation}

	Write
	\[
	F(\theta)=\sum_{|\alpha|\le D}a_\alpha\theta^\alpha,
	\qquad \alpha=(\alpha',\alpha_n),
	\]
	and define
	\begin{equation}\label{eq:sphere-numerator}
		G_\pm(u)=(1+|u|^2)^DF(\Psi_\pm(u)).
	\end{equation}
	For each monomial in $F$, the corresponding term
	in $G_\pm$ is
	\[
	a_\alpha(\pm1)^{|\alpha|}2^{|\alpha'|}
	u^{\alpha'}(1-|u|^2)^{\alpha_n}
	(1+|u|^2)^{D-|\alpha|},
	\]
	whose degree is at most
	\[
	|\alpha'|+2\alpha_n+2(D-|\alpha|)
	=2D-|\alpha'|\le2D.
	\]
	Thus $G_\pm$ is a polynomial of degree at most $2D$.
	
	The map $F\mapsto G_\pm$ is linear and injective:
	if $G_\pm$ vanishes on $U$, it vanishes identically
	as a polynomial, and hence $F$ vanishes on
	$\Sph^{n-1}$. By equivalence of norms on the
	finite-dimensional space of polynomial restrictions
	of degree at most $D$,
	\begin{equation}\label{eq:sphere-norms}
		\norm{G_\pm}_\coef\asymp\norm F_\infty.
	\end{equation}
	
	Set
	\[
	E_\pm(t)=
	\{u\in U:|F(\Psi_\pm(u))|
	\le t\norm F_\infty\}.
	\]
	Since $U$ is bounded, \eqref{eq:sphere-numerator}
	and \eqref{eq:sphere-norms} imply
	\[
	E_\pm(t)\subset
	\{u\in U:|G_\pm(u)|
	\le Ct\norm{G_\pm}_\coef\}.
	\]
	Applying Lemma~\ref{lem:sublevel} with degree
	bound $2D$ when $Ct<1$, and using the trivial
	bound $|E_\pm(t)|\le |U|$ otherwise, gives
	\[
	|E_\pm(t)|\le Ct^{1/(2D)}.
	\]
	Finally, by \eqref{eq:sphere-density},
	\[
	\begin{aligned}
		\sigma\{\theta:|F(\theta)|
		\le t\norm F_\infty\}
		&\le\sum_{\pm}\int_{E_\pm(t)}J(u)\,du\\
		&\le C\sum_{\pm}|E_\pm(t)|\\
		&\le Ct^{1/(2D)}.
	\end{aligned}
	\]
\end{proof}

We next apply the spherical sublevel set estimate to the coefficients of a homogeneous polynomial on two-dimensional sections. This bounds the measure of  directions for which the section coefficients are too small relative to those of the original polynomial.

\begin{lemma}\label{lem:direction}
Fix $e\in\Sph^{n-1}$ and set $\Sigma_e=\{\theta\in e^\perp:|\theta|=1\}$. Equip $\Sigma_e$ with surface measure $\sigma_e$ when $n\ge3$, and with counting measure on its two points when $n=2$. For a nonzero homogeneous polynomial $Q_m$ of degree $m$, where $1\le m\le D$, write
\begin{equation}\label{eq:direction-coeff}
 Q_m(pe+t\theta)=\sum_{a=0}^mp^at^{m-a}C_a^{e,Q_m}(\theta).
\end{equation}
Then
\begin{gather}
 \sup_{\theta\in\Sigma_e}\sum_{a=0}^m|C_a^{e,Q_m}(\theta)|
 \ge c\norm{Q_m}_\coef,\label{eq:direction-sup}\\
 \sigma_e\left\{\theta:\sum_{a=0}^m|C_a^{e,Q_m}(\theta)|
 \le t\norm{Q_m}_\coef\right\}
 \le Ct^\alpha,
 \qquad\alpha=\frac1{2D},\quad0<t<1.\label{eq:direction-bad}
\end{gather}
\end{lemma}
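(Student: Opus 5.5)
The plan is to reduce both assertions to statements about one auxiliary polynomial on $\Sigma_e$ (or on the whole sphere) and then invoke Lemma~\ref{lem:sphere}. By rotation invariance of the coefficient norm up to constants (the linear change of variables by an orthogonal matrix acts boundedly on the finite-dimensional space of homogeneous polynomials of degree $m$, uniformly over $O(n)$ by compactness), we may assume $e=e_n$. Then $\Sigma_e$ is the unit sphere $\Sph^{n-2}$ of $\R^{n-1}$, and each $C_a^{e,Q_m}(\theta)$ is the restriction to $\Sph^{n-2}$ of a homogeneous polynomial of degree $m-a$ in $\theta\in\R^{n-1}$. Concretely, write $Q_m(x',x_n)=\sum_{a}x_n^a\,Q^{(a)}(x')$ with $Q^{(a)}$ homogeneous of degree $m-a$; then $C_a(\theta)=Q^{(a)}(\theta)$. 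Since the map $Q_m\mapsto(Q^{(a)})_a$ is a coefficient-preserving bijection, we have $\norm{Q_m}_\coef=\sum_a\norm{Q^{(a)}}_\coef$.

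For \eqref{eq:direction-sup}, we use that a homogeneous polynomial of degree $k$ on $\R^{n-1}$ is determined by its restriction to $\Sph^{n-2}$. Hence $\norm{Q^{(a)}}_{L^\infty(\Sph^{n-2})}\asymp\norm{Q^{(a)}}_\coef$ by equivalence of norms on a finite-dimensional space, with constants depending only on $n,D$. Then
\[
\sup_\theta\sum_a|C_a(\theta)|\ge\max_a\sup_\theta|Q^{(a)}(\theta)|\ge c\max_a\norm{Q^{(a)}}_\coef\ge c'\norm{Q_m}_\coef.
\]
When $n=2$, $\Sigma_e$ consists of two points $\pm1$, and $Q^{(a)}(\theta)=c_a\theta^{m-a}$, so the bound is immediate.

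For \eqref{eq:direction-bad}, the case $n=2$ is trivial. Each $|C_a(\theta)|=|c_a|$ is constant in $\theta$, so the sum equals $\norm{Q_m}_\coef$ at both points and the set is empty for $t<1$. For $n\ge3$, pick the index $a_0$ maximizing $\norm{Q^{(a)}}_\coef$, so that $\norm{Q^{(a_0)}}_\coef\ge\norm{Q_m}_\coef/(m+1)$. Set $F=Q^{(a_0)}|_{\Sph^{n-2}}$, which is nonzero of degree at most $D$ and satisfies $\norm F_\infty\ge c\norm{Q_m}_\coef$. Since the sum dominates $|F|$, the bad set is contained in $\{|F|\le (t/c)\norm F_\infty\}$. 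Applying Lemma~\ref{lem:sphere} on $\Sph^{n-2}$, which is valid because $n-1\ge2$, gives the bound $C t^{1/(2D)}$ when $t/c<1$. Otherwise we use $\sigma_e(\Sigma_e)\le C\le Ct^{\alpha}$, since then $t\ge c$.

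The only delicate point is the uniformity in $e$ of the rotation reduction. I would handle it by noting that $\norm{Q\circ U}_\coef\le C\norm Q_\coef$ for all orthogonal $U$, with $C$ depending only on $n,D$, since the entries of $U$ are bounded by $1$. Applying this to $U$ and $U^{-1}$ gives two-sided comparability, and $\sigma_e$ is carried to the standard measure on $\Sph^{n-2}$ by the rotation.
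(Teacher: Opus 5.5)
Your proof is correct and follows essentially the same route as the paper: a norm-equivalence lower bound for the coefficient sum, selection of the dominant coefficient function $C_{a}^{e,Q_m}$, Lemma~\ref{lem:sphere} applied on $\Sigma_e\cong\Sph^{n-2}$ for $n\ge3$, and the $\theta\mapsto-\theta$ symmetry for $n=2$. The paper skips your rotation step by bounding $c\norm{Q_m}_\coef\le\sup_{|x|\le1}|Q_m(x)|\le\sup_\theta\sum_a|C_a^{e,Q_m}(\theta)|$ directly, which suffices because Lemma~\ref{lem:sphere} is phrased via $\norm F_\infty$ and is therefore rotation invariant. One small correction: once the rotation is undone, the $n=2$ bad set is empty only for $t<c$, not for all $t<1$, and the remaining range is handled by the trivial bound $2\le Ct^\alpha$.
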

\begin{proof}
	For $|x|\le1$, write $x=pe+t\theta$ with
	$|p|,t\le1$. By equivalence of norms,
	\[
	\begin{aligned}
		c\norm{Q_m}_\coef
		&\le\sup_{|x|\le1}|Q_m(x)|\\
		&\le\sup_{\theta\in\Sigma_e}
		\sum_{a=0}^m|C_a^{e,Q_m}(\theta)|\\
		&\le(m+1)\max_a\norm{C_a^{e,Q_m}}_\infty.
	\end{aligned}
	\]
	Choose $a_*$ attaining the last maximum. Then
	\[
	\norm{C_{a_*}^{e,Q_m}}_\infty
	\ge c\norm{Q_m}_\coef,
	\]
	and hence
	\[
	\begin{aligned}
		&\left\{\theta:
		\sum_{a=0}^m|C_a^{e,Q_m}(\theta)|
		\le t\norm{Q_m}_\coef\right\}\\
		&\qquad\subset
		\left\{\theta:
		|C_{a_*}^{e,Q_m}(\theta)|
		\le Ct\norm{C_{a_*}^{e,Q_m}}_\infty\right\}.
	\end{aligned}
	\]
	If $n\ge3$, then $C_{a_*}^{e,Q_m}$ is the restriction
	of a polynomial of degree at most $D$ to
	$\Sigma_e\cong\Sph^{n-2}$. Lemma~\ref{lem:sphere}
	therefore gives \eqref{eq:direction-bad} when
	$Ct<1$; the remaining case follows from
	$\sigma_e(\Sigma_e)\le C$.
	
	If $n=2$, then $\Sigma_e=\{\theta,-\theta\}$ and
	\[
	C_a^{e,Q_m}(-\theta)
	=(-1)^{m-a}C_a^{e,Q_m}(\theta).
	\]
	Thus the coefficient sum has the same value at
	both points and is bounded below by
	$c\norm{Q_m}_\coef$. The sublevel set is therefore
	empty for $t<c$, while for $t\ge c$ its measure
	is at most $2\le Ct^\alpha$.
\end{proof}

\subsection{Two-center coordinates and mixed derivatives}
\label{subsec:two-center-coordinates}

Fix $y\ne z$ and set
\[
d=|y-z|,\qquad e=(y-z)/d.
\]
By polar coordinates in $e^\perp$, every $x$ outside
the line through $y$ and $z$ has a unique representation
\begin{equation}\label{eq:cyl}
	x=z+r(pe+t\theta),\qquad
	p\in\R,\quad t>0,\quad\theta\in\Sigma_e,
	\qquad
	dx=r^nt^{n-2}\,dp\,dt\,d\sigma_e(\theta).
\end{equation}
Set $H=d/r$, $L=\max(1,H)$, and define
\begin{equation}\label{eq:uv}
	u=\frac pt,\qquad v=\frac{p-H}{Lt}.
\end{equation}
The inverse transformation is
\begin{equation}\label{eq:inverse}
	t=\frac H{u-Lv},\qquad
	p=\frac{Hu}{u-Lv},\qquad
	\abs{\frac{\partial(p,t)}{\partial(u,v)}}
	=\frac{Lt^3}{H},
\end{equation}
which maps $t>0$ onto $u-Lv>0$.
Since
\[
x-z=rt(ue+\theta),\qquad
x-y=rt(Lve+\theta),
\]
we obtain
\begin{equation}\label{eq:directions}
	\frac{x-z}{|x-z|}
	=\frac{ue+\theta}{\sqrt{1+u^2}},
	\qquad
	\frac{x-y}{|x-y|}
	=\frac{Lve+\theta}{\sqrt{1+L^2v^2}}.
\end{equation}
Thus the two rough factors depend on $u$ and $v$
separately.

For a real polynomial $Q$, write
\[
q_\theta(p,t)=Q(z+r(pe+t\theta)),
\]
and define
\[
E_0=p\partial_p+t\partial_t,\qquad
E_H=(p-H)\partial_p+t\partial_t,\qquad
S_\theta=(E_H+1)E_0q_\theta.
\]

\begin{lemma}[Mixed derivative identity]\label{lem:mixed}
	Let $\Phi_\theta(u,v)=q_\theta(p(u,v),t(u,v))$.
	Then
	\begin{equation}\label{eq:mixed}
		\partial_u\partial_v\Phi_\theta(u,v)
		=-\frac{Lt^2}{H^2}S_\theta(p,t).
	\end{equation}
	If $q_k$ is the homogeneous component of degree
	$k$ of $q_\theta$, then
	\begin{equation}\label{eq:TH}
		(E_H+1)E_0q_k
		=k(k+1)q_k-kH\partial_pq_k.
	\end{equation}
	In particular, the highest-degree component
	does not cancel. Moreover, when $0\le H\le C$,
	\begin{equation}\label{eq:triangular}
		\norm{q_\theta}_\nc
		\le C\norm{S_\theta}_\coef.
	\end{equation}
	At $H=0$, \eqref{eq:triangular} is understood
	algebraically, without using the coordinate
	transformation.
\end{lemma}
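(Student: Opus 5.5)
The plan is to verify \eqref{eq:mixed} by a direct chain-rule computation in the coordinates \eqref{eq:inverse}, then obtain \eqref{eq:TH} from Euler's identity, and finally derive \eqref{eq:triangular} by inverting a triangular system degree by degree.

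\textbf{Step 1 (chain rule).} Put $w=u-Lv>0$, so that $t=H/w$ and $p=ut$. Differentiating gives
\[
\partial_ut=-\frac{t^2}{H},\qquad \partial_up=t-\frac{ut^2}{H}=\frac{t(H-p)}{H},\qquad
\partial_vt=\frac{Lt^2}{H},\qquad \partial_vp=\frac{Lpt}{H}.
\]
Hence, on functions of $(p,t)$,
\[
\partial_v=\frac{Lt}{H}E_0,\qquad \partial_u=-\frac tHE_H .
\]
In particular $\partial_v\Phi_\theta=\frac{Lt}{H}E_0q_\theta$. Since $L$ and $H$ are constants, and $E_Ht=t$, applying $\partial_u$ gives
\[
\partial_u\partial_v\Phi_\theta=-\frac{L}{H^2}\,t\,E_H\bigl(tE_0q_\theta\bigr)=-\frac{Lt^2}{H^2}(E_H+1)E_0q_\theta .
\]
This is \eqref{eq:mixed}. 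The map is smooth on $\{t>0\}$, so the order of differentiation is irrelevant.

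\textbf{Step 2 (homogeneous components).} Euler's identity gives $E_0q_k=kq_k$. Writing $E_H=E_0-H\partial_p$, we get
\[
(E_H+1)(kq_k)=k(k+1)q_k-kH\partial_pq_k,
\]
which is \eqref{eq:TH}. Because $\partial_pq_k$ is homogeneous of degree $k-1$, the top-degree component of $S_\theta$ is $d(d+1)q_d$ with $d=\deg q_\theta$. This coefficient is nonzero when $d\ge1$, so the highest-degree component does not cancel.

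\textbf{Step 3 (triangular inversion).} Collect the terms of $S_\theta=\sum_k\bigl[k(k+1)q_k-kH\partial_pq_k\bigr]$ by degree. The degree-$j$ component is
\[
S^{(j)}=j(j+1)q_j-(j+1)H\partial_pq_{j+1},\qquad j\ge0,
\]
and $q_0$ never appears. For $j\ge1$ this can be solved as
\[
q_j=\frac{S^{(j)}+(j+1)H\partial_pq_{j+1}}{j(j+1)}.
\]
Start from $q_d=S^{(d)}/(d(d+1))$ and substitute downward in $j$. Using $\norm{\partial_pq}_\coef\le D\norm q_\coef$ and $0\le H\le C$, induction gives $\norm{q_j}_\coef\le C\sum_{i\ge j}\norm{S^{(i)}}_\coef$ with $C=C(D)$. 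Summing over $1\le j\le d$ yields $\norm{q_\theta}_\nc\le C\norm{S_\theta}_\coef$, since the nonconstant coefficients of $q_\theta$ are exactly those of $q_1,\dots,q_d$.

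At $H=0$ all three steps are polynomial identities in $(p,t)$. There $E_H=E_0$ and the relation reduces to $S^{(j)}=j(j+1)q_j$, which needs no change of variables, consistent with the remark in the statement.

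The only step needing care is the bookkeeping in Step 1, namely the signs and the commutator $E_Ht=t$ that produces the shift $E_H+1$. Steps 2 and 3 are routine linear algebra, with constants depending only on $D$.
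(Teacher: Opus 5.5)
Your proposal is correct and follows essentially the same route as the paper: the chain-rule identities $\partial_v=\frac{Lt}{H}E_0$ and $\partial_u=-\frac tH E_H$, combined with $E_Ht=t$, give \eqref{eq:mixed}; Euler's identity with $E_H=E_0-H\partial_p$ gives \eqref{eq:TH}; and descending induction on $S_k=k(k+1)q_k-(k+1)H\partial_pq_{k+1}$ gives \eqref{eq:triangular}. The only minor imprecision is your remark that ``all three steps'' are polynomial identities at $H=0$: \eqref{eq:mixed} itself degenerates there, only the triangular step is needed, and its constant depends on the bound for $H$ as well as on $D$.
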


\begin{proof}
	Differentiating \eqref{eq:inverse} gives
	\[
	\frac{\partial(p,t)}{\partial(u,v)}
	=\begin{pmatrix}
		t-pt/H & Lpt/H\\ -t^2/H & Lt^2/H
	\end{pmatrix},\qquad
	\det\frac{\partial(p,t)}{\partial(u,v)}=\frac{Lt^3}{H}.
	\]
	By the chain rule,
	\[
	\partial_v=\frac{Lt}{H}E_0,
	\qquad
	\partial_u=-\frac tH E_H.
	\]
	Since $E_Ht=t$,
	\[
	\begin{aligned}
		\partial_u\partial_v\Phi_\theta
		&=-\frac{Lt}{H^2}E_H(tE_0q_\theta)\\
		&=-\frac{Lt^2}{H^2}(E_H+1)E_0q_\theta,
	\end{aligned}
	\]
	which proves \eqref{eq:mixed}. The identities
	$E_0q_k=kq_k$ and $E_H=E_0-H\partial_p$
	give \eqref{eq:TH}.

Write $q_\theta=\sum_{k=0}^Dq_k$ and
$S_\theta=\sum_{k=0}^DS_k$, with $q_{D+1}=0$.
Comparing homogeneous components yields
\[
S_k=k(k+1)q_k-(k+1)H\partial_pq_{k+1},
\qquad 1\le k\le D.
\]
Since
\[
\norm{\partial_pq_{k+1}}_\coef
\le(k+1)\norm{q_{k+1}}_\coef
\]
and $0\le H\le C$, we obtain
\[
\norm{q_k}_\coef
\le C\left(
\norm{S_k}_\coef+\norm{q_{k+1}}_\coef
\right).
\]
Descending induction on $k$ gives
\[
\norm{q_k}_\coef
\le C\sum_{j=k}^D\norm{S_j}_\coef,
\qquad 1\le k\le D.
\]
Consequently,
\[
\norm{q_\theta}_\nc
=\sum_{k=1}^D\norm{q_k}_\coef
\le C\sum_{k=1}^D\norm{S_k}_\coef
\le C\norm{S_\theta}_\coef,
\]
which proves \eqref{eq:triangular}.

\end{proof}

\subsection{Oscillatory operator estimates on rectangles}\label{subsec:rectangle-estimate}
\begin{lemma}[Mixed derivative estimate on a rectangle]\label{lem:rect}
Let $U,V$ be bounded intervals of length at most $1$, let $\Phi\in C^3(U\times V)$ be real-valued, and let $a\in C_c^1(U\times V)$. Suppose that $\Phi_{uv}$ has constant sign and
\[
 |\Phi_{uv}|\ge\lambda\ge2,\qquad |\Phi_{uvv}|\le M_\Phi\lambda.
\]
Set  $$\mathcal V f(u)=\int_Ve^{i\Phi(u,v)}a(u,v)f(v)\,dv$$ and  $M_a=\norm a_\infty+\norm{\partial_va}_\infty.$ Then
\begin{equation}\label{eq:rect-bound}
 \norm{\mathcal V}_{2\to2}\le CM_a(1+M_\Phi)^{1/2}\lambda^{-1/4}.
\end{equation}

\end{lemma}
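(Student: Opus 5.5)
We use a $TT^*$ argument in the $v$-variable, followed by van der Corput's lemma with a smooth amplitude. Write $\norm{\mathcal V}_{2\to2}^2=\norm{\mathcal V^*\mathcal V}_{2\to2}$. The operator $\mathcal V^*\mathcal V$ has kernel
\[
K(v,w)=\int_U e^{i[\Phi(u,v)-\Phi(u,w)]}\,\overline{a(u,w)}\,a(u,v)\,du ,\qquad v,w\in V .
\]
The key observation is that the amplitude is differentiated only in $u$ here, whereas the hypothesis controls $\partial_v a$. So we should instead use the $\mathcal V\mathcal V^*$ form, whose kernel on $U\times U$ is
\[
\widetilde K(u,u')=\int_V e^{i[\Phi(u,v)-\Phi(u',v)]}\,a(u,v)\,\overline{a(u',v)}\,dv ,
\]
with integration in $v$. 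The phase $\psi(v)=\Phi(u,v)-\Phi(u',v)$ then has
\[
\psi'(v)=\int_{u'}^{u}\Phi_{uv}(w,v)\,dw .
\]
Because $\Phi_{uv}$ has constant sign and $|\Phi_{uv}|\ge\lambda$, we get $|\psi'(v)|\ge\lambda|u-u'|$. Also $\psi''(v)=\int_{u'}^u\Phi_{uvv}$, so $|\psi''|\le M_\Phi\lambda|u-u'|$.

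Next we bound $\widetilde K$ by first-derivative van der Corput, integrating by parts once. We have
\[
\int_V e^{i\psi}b\,dv=\Bigl[\tfrac{e^{i\psi}b}{i\psi'}\Bigr]_{\partial V}-\int_V e^{i\psi}\Bigl(\tfrac{b}{i\psi'}\Bigr)'dv ,
\]
with $b=a(u,\cdot)\overline{a(u',\cdot)}$. The boundary terms vanish by compact support. The derivative term gives
\[
|b'|/|\psi'|\le 2M_a^2/(\lambda|u-u'|),\qquad |b|\,|\psi''|/|\psi'|^2\le M_a^2M_\Phi/(\lambda|u-u'|),
\]
using $|V|\le1$. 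Together with the trivial bound $|\widetilde K|\le M_a^2$, this yields
\[
|\widetilde K(u,u')|\le CM_a^2(1+M_\Phi)\min\bigl(1,(\lambda|u-u'|)^{-1}\bigr).
\]

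Schur's test (Lemma~\ref{lem:schur}) then gives
\[
\sup_u\int_U\min\bigl(1,(\lambda|u-u'|)^{-1}\bigr)\,du'\le C\lambda^{-1}(1+\log\lambda)\le C\lambda^{-1/2},
\]
using $|U|\le1$ and $\lambda\ge2$. Hence $\norm{\mathcal V}^2\le CM_a^2(1+M_\Phi)\lambda^{-1/2}$, and taking square roots gives \eqref{eq:rect-bound}. This is even stronger than needed: the logarithm has been absorbed into the weaker power $\lambda^{-1/4}$ of the statement.

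The main point requiring care is the lower bound $|\psi'|\ge\lambda|u-u'|$, which needs the constant sign of $\Phi_{uv}$. A second point is making sure the integration by parts uses only $\partial_v a$ and $\Phi_{uvv}$, which is why the $\mathcal V\mathcal V^*$ kernel rather than the $\mathcal V^*\mathcal V$ kernel is the right choice.
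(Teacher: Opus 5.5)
Your proposal is correct and follows the paper's proof essentially line for line. The paper also works with $\mathcal V\mathcal V^*$ rather than $\mathcal V^*\mathcal V$, for exactly the reason you give. It derives $|\Psi'|\ge\lambda|u-u'|$ and $|\Psi''|\le M_\Phi\lambda|u-u'|$ from the constant sign of $\Phi_{uv}$, integrates by parts once in $v$ to get the kernel bound $CM_a^2(1+M_\Phi)\min\{1,(\lambda|u-u'|)^{-1}\}$, and concludes with Schur's test and $1+\log\lambda\le C\lambda^{1/2}$. One small inaccuracy in your closing remark: once the logarithm is absorbed you obtain exactly the stated $\lambda^{-1/4}$, not something stronger; the bound that actually beats the statement is the intermediate $\lambda^{-1/2}(1+\log\lambda)^{1/2}$.
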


\begin{proof}
	The adjoint and the correlation kernel are given by
	\begin{align*}
		\mathcal V^*g(v)=\int_Ue^{-i\Phi(u',v)}\overline{a(u',v)}g(u')\,du',\end{align*} and 
	\begin{align*} \mathcal V\mathcal V^*g(u)=\int_UK_{\mathcal V}(u,u')g(u')\,du',\end{align*}
	where  $$K_{\mathcal V}(u,u')=\int_Ve^{i\Psi(v)}b(v)\,dv$$
	and \begin{align*}\Psi(v)&=\Phi(u,v)-\Phi(u',v),\qquad
		b(v)=a(u,v)\overline{a(u',v)}.
	\end{align*}
	For $h=|u-u'|>0$, the constant sign of
	$\Phi_{uv}$ gives
	\[
	\begin{aligned}
		|\Psi'(v)|
		&=\left|\int_{u'}^u\Phi_{uv}(w,v)\,dw\right|
		\ge\lambda h,\\
		|\Psi''(v)|
		&\le\int_{\min(u,u')}^{\max(u,u')}
		|\Phi_{uvv}(w,v)|\,dw
		\le M_\Phi\lambda h.
	\end{aligned}
	\]
	Since $b$ is compactly supported in $V$,
	integration by parts yields
	\[
	K_{\mathcal V}(u,u')
	=-\int_V e^{i\Psi(v)}
	\left(
	\frac{b'(v)}{i\Psi'(v)}
	-\frac{b(v)\Psi''(v)}{i(\Psi'(v))^2}
	\right)dv.
	\]
	Using
	\[
	\norm b_\infty\le M_a^2,
	\qquad
	\norm{b'}_\infty\le2M_a^2,
	\]
	and $|V|\le1$, we obtain
	\[
	|K_{\mathcal V}(u,u')|
	\le\frac{CM_a^2(1+M_\Phi)}
	{\lambda|u-u'|}.
	\]
	Together with the trivial bound
	$|K_{\mathcal V}(u,u')|\le M_a^2$, this gives
	\begin{equation}\label{eq:rect-kernel}
		|K_{\mathcal V}(u,u')|
		\le CM_a^2(1+M_\Phi)
		\min\left\{1,\frac1{\lambda|u-u'|}\right\}.
	\end{equation}
	
	Since $|U|\le1$ and $\lambda\ge2$,
	\begin{align}
		\sup_{u\in U}\int_U\min\left\{1,\frac1{\lambda|u-u'|}\right\}\,du'
		&\le\int_{|u-u'|\le\lambda^{-1}}1\,du'
		+\frac1\lambda\int_{\lambda^{-1}<|u-u'|\le1}\frac{du'}{|u-u'|}\notag\\
		&\le\frac2\lambda+\frac2\lambda\int_{\lambda^{-1}}^1\frac{dt}{t}
		=\frac{2(1+\log\lambda)}\lambda.\label{eq:rect-row-integral}
	\end{align}
	The same bound holds for the column integral.
	Lemma \ref{lem:schur} therefore yields
	\begin{align}
		\norm{\mathcal V}_{2\to2}^2
		&=\norm{\mathcal V\mathcal V^*}_{2\to2}\notag\\
		&\le\left(\sup_u\int_U|K_{\mathcal V}(u,u')|\,du'\right)^{1/2}
		\left(\sup_{u'}\int_U|K_{\mathcal V}(u,u')|\,du\right)^{1/2}\notag\\
		&\le CM_a^2(1+M_\Phi)\frac{1+\log\lambda}{\lambda}.\label{eq:rect-schur}
	\end{align}

	Since $1+\log\lambda\le C\lambda^{1/2}$
	for $\lambda\ge2$, we conclude that
	\[
	\norm{\mathcal V}_{2\to2}
	\le CM_a(1+M_\Phi)^{1/2}\lambda^{-1/4}.
	\]
\end{proof}

\subsection{Two-center rough oscillatory kernel estimates}\label{subsec:two-center-estimate}
\begin{lemma}[Two-center rough oscillatory kernel estimates]\label{lem:twocenter}
Let $R\ge r>0$, $y\ne z$, and $d=|y-z|$. Suppose that $a\in C_c^2(\R^n)$ and
\[
 \supp a\subset
 \{x:c_*r\le|x-z|\le Cr,\ c_*R\le|x-y|\le CR\}.
\]
In the coordinates of \eqref{eq:cyl}, set
\[
 a_\theta(p,t):=a(z+r(pe+t\theta)).
\]
Assume that
\[
 \supp a_\theta\subset\{|p|\le C,\ 0<t\le C\},\qquad
 \sup_\theta\max_{i+j\le2}\norm{\partial_p^i\partial_t^ja_\theta}_\infty\le C.
\]
Fix the scale separation $s$, use the truncated kernel $\Omega_s$ with the bound $|\Omega_s|\le2^{\eta_0s}\norm\Omega_1$, and write
\begin{equation}\label{eq:two-integral}
 I=R^{-n}r^{-n}\int_{\R^n}e^{iQ(x)}a(x)
 \Omega_s\!\left(\frac{x-y}{|x-y|}\right)
 \overline{\Omega_s\!\left(\frac{x-z}{|x-z|}\right)}dx.
\end{equation}
If $Q$ is a nonconstant real polynomial of degree at most $D$, with highest homogeneous component $Q_m$ and $A_m=\norm{Q_m}_\coef>0$, then
\begin{equation}\label{eq:two-high}
 |I|\le C2^{2\eta_0s}\norm\Omega_1^2 R^{-n}\min\left\{1,
 \left(1+\frac rd\right)^N
 \left(\frac d{r^{m+1}A_m}\right)^c\right\}.
\end{equation}
If $R=r$, $d\le CR$, and $A_Q=\norm{Q(z+R\cdot)}_\nc>0$, then
\begin{equation}\label{eq:two-equal}
 |I|\le C2^{2\eta_0s}\norm\Omega_1^2 R^{-n}\min\{1,(1+R/d)^N A_Q^{-c}\}.
\end{equation}
One may take $c=1/(258D)$ and $N=16$. In the equal-scale estimate, the constant may also depend on the fixed upper bound for $d/R$. When $Q$ is constant or $d=0$, we use only the direct bound $C2^{2\eta_0s}\norm\Omega_1^2 R^{-n}$.
\end{lemma}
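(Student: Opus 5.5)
The plan is to pass to the two-center coordinates of \eqref{eq:cyl}--\eqref{eq:directions}, separate the rough factors, and gain decay from the mixed derivative via Lemma~\ref{lem:rect}. The degenerate parameters are removed with Lemmas~\ref{lem:direction} and \ref{lem:sublevel}.

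\textbf{Direct bound and change of variables.} The trivial bound $|I|\le C2^{2\eta_0s}\norm\Omega_1^2R^{-n}$ follows from $|\Omega_s|\le2^{\eta_0s}\norm\Omega_1$ in one factor and polar coordinates around $z$ in the other, since $\supp a\subset\{|x-z|\sim r\}$ gives $r^{-n}\int|\Omega_s((x-z)')|\,dx\le C\norm\Omega_1$. For the decay, write $x=z+r(pe+t\theta)$ and change $(p,t)\mapsto(u,v)$ using \eqref{eq:inverse}. Then
\[
I=R^{-n}\int_{\Sigma_e}\iint e^{i\Phi_\theta(u,v)}\tilde a_\theta(u,v)\,\Omega_s(\omega_v)\,\overline{\Omega_s(\omega_u)}\,du\,dv\,d\sigma_e(\theta),
\]
where $\tilde a_\theta$ absorbs $a_\theta$, $t^{n-2}$ and the Jacobian $Lt^3/H$. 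By \eqref{eq:directions}, $\omega_u$ and $\omega_v$ depend on $u$ and $v$ only. On the support $t\sim1$ and $|p|\le C$, so $u$ ranges over a bounded set. The variable $v$ ranges over a set of length $O(L)$ in the unequal-scale case; I rescale it by $L$ to keep the $\Omega_s$ weight at $O(1)$ $L^1$ mass.

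\textbf{One-dimensional reduction.} For fixed $\theta$, cover the $(u,v)$ domain by $O(1+L)$ unit rectangles. On each rectangle apply Cauchy--Schwarz:
\[
\Bigl|\int\overline{\Omega_s(\omega_u)}\,\mathcal V[\Omega_s(\omega_\cdot)](u)\,du\Bigr|\le\norm{\Omega_s(\omega_\cdot)}_{L^2}^2\norm{\mathcal V}_{2\to2}\le C2^{2\eta_0s}\norm\Omega_1^2\norm{\mathcal V}_{2\to2}.
\]
Here I use the $L^\infty$ bound on one factor and $L^1$ on the other, so that the $\theta$-integral of the product restores $\norm\Omega_1^2$; I interpolate $\norm\cdot_2^2\le\norm\cdot_\infty\norm\cdot_1$ and integrate in $\theta$. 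Lemma~\ref{lem:rect} then gives $\norm{\mathcal V}_{2\to2}\lesssim\lambda^{-1/4}$ wherever $|\partial_u\partial_v\Phi_\theta|\ge\lambda$ with constant sign. Third-derivative control comes from \eqref{eq:mixed}, since $S_\theta$ is a polynomial of bounded degree with comparable coefficient size on the rectangle.

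\textbf{Nondegeneracy.} By \eqref{eq:mixed}, $|\partial_u\partial_v\Phi_\theta|=(Lt^2/H^2)|S_\theta(p,t)|$. In \eqref{eq:two-high}, the top component of $q_\theta$ is $r^mQ_m(pe+t\theta)$, and by \eqref{eq:TH} the top component of $S_\theta$ is $m(m+1)r^mQ_m(pe+t\theta)$. Its coefficients are $r^mC_a^{e,Q_m}(\theta)$, and Lemma~\ref{lem:direction} shows they are $\ge t_0A_m$ except on a $\theta$-set of measure $Ct_0^{1/(2D)}$. For good $\theta$, Lemma~\ref{lem:sublevel} applied in $(p,t)$ on the bounded box shows that $|S_\theta|\ge\delta t_0r^mA_m$ except on a set of measure $C\delta^{1/D}$. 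This bound is only available after converting lower-degree interference into a loss $(1+r/d)^N$: the factor $H^{-2}=(r/d)^2$ and the Jacobian distortion produce these losses, while $L\ge1$ with $H=d/r$ produces the factor $d/r^{m+1}$ after comparing $Lt^2/H^2\cdot r^m A_m$ against the natural threshold. The bad set is cut out with a smooth cutoff adapted to a $\delta'$-grid. Its contribution is bounded by the trivial estimate times its measure, which is where \eqref{eq:thick} enters to control the cubes meeting the sublevel set. Optimizing $t_0$, $\delta$ and the grid size against $\lambda^{-1/4}$, with all exponents of the form $1/(2D)$, $1/D$ and $1/4$, yields a power $c=1/(258D)$.

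The equal-scale case \eqref{eq:two-equal} is the same argument with $H\le C$, using \eqref{eq:triangular} to bound $\norm{S_\theta}_\coef\gtrsim\norm{q_\theta}_\nc$. This is related to $A_Q$ by \eqref{eq:translation} and Lemma~\ref{lem:direction} applied to each homogeneous piece.

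\textbf{Main obstacle.} The hard part is the derivative bounds required on the amplitude after the change of variables. The cutoff removing the degenerate set must have $\partial_v$ controlled, which costs a power of the grid size, and the Jacobian $Lt^3/H$ blows up as $H\to0$, which is the source of $(1+r/d)^N$. I would first try a smooth partition into rectangles of size $\delta'$ in $(u,v)$, and rescale each rectangle to unit size in Lemma~\ref{lem:rect}. This converts $M_a\sim\delta'^{-1}$ and $\lambda\sim\delta'^2\cdot(\text{mixed derivative})$, and I would then balance the parameters.
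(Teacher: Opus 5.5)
\textbf{Gap at $t\to0$.} Your architecture matches the paper's: two-center coordinates separating the rough factors, Cauchy--Schwarz against the one-dimensional operator $\mathcal V$, Lemma~\ref{lem:rect} on cells where $S_\theta$ has fixed sign and size, sublevel sets in $(p,t)$ and in $\theta$, and \eqref{eq:triangular} for equal scales. However, your one-dimensional reduction rests on a false claim. You assert that $t\sim1$ on the support, so that $u$ is bounded and $O(1+L)$ unit rectangles cover the $(u,v)$-domain. The hypothesis only gives $0<t\le C$: $|x-z|\sim r$ means $p^2+t^2\sim1$, and the support reaches the line through $y$ and $z$.

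Near that line, $u=p/t$ and $v=(p-H)/(Lt)$ are unbounded. (Note $|u|+|v|\le C/t$: the $L$ in $v$ already normalizes its range, so the domain size is governed by $t$, not by $L$.) By \eqref{eq:mixed}, $|\partial_u\partial_v\Phi_\theta|=(Lt^2/H^2)|S_\theta|$ degenerates like $t^2$, however large $S_\theta$ is. Your nondegeneracy step only excises $\{|S_\theta|\text{ small}\}$. Hence Lemma~\ref{lem:rect} has no admissible $\lambda$ near $t=0$, and the covering is not finite.

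The missing step is a cutoff $\chi(t/\varepsilon)$. The removed region costs $\int_0^{2\varepsilon}t^{n-2}\,dt\lesssim\varepsilon^{n-1}$. On the rest, $|u|+|v|\le C\varepsilon^{-1}$ and $|\Phi_{\theta,uv}|\ge c\varepsilon^2\eta\Lambda_\theta$, where $B_\theta=\norm{S_\theta}_{\coef}$ and $\Lambda_\theta=(L/H^2)B_\theta$. These $\varepsilon$-losses propagate, with $K_H=1+r/d$:
\begin{itemize}
\item the cells must have side $\Delta=c_1\varepsilon\eta K_H^{-1}$, so that $t\ge\varepsilon/2$ and the sign and size of $S_\theta$ persist on each whole cell;
\item there are $\lesssim K_H^2\varepsilon^{-4}\eta^{-2}$ cells;
\item the amplitude constant is $\lesssim K_H^2\varepsilon^{-2}\eta^{-2}$;
\item $M_\Phi\lesssim K_H\varepsilon^{-2}\eta^{-1}$.
\end{itemize}
Only then does the choice $\varepsilon=\eta=\Lambda_\theta^{-1/256}$ give $|I_\theta|\lesssim 2^{2\eta_0s}\norm\Omega_1^2R^{-n}\min\{1,K_H^8\Lambda_\theta^{-1/(256D)}\}$, whence $c=1/(258D)$ and $N=16$. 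Your proposal states these values without any of this bookkeeping.

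\textbf{Further corrections.} The loss $(1+r/d)^N=K_H^N$ does not come from $H^{-2}$, which enlarges the mixed derivative. It comes from three sources: the Lipschitz constant $\sim K_H$ of $(u,v)\mapsto(p,t)$, the Jacobian factor $L/H\le K_H$ in the amplitude, and $p_v,t_v=O(K_H)$ when $\partial_v$ hits the cutoffs.

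To reach $d/(r^{m+1}A_m)$ you need $L\ge H$, i.e.\ $L/H^2\ge r/d$. Using only $L\ge1$ yields $(r/d)^2$ and loses a power of $r/d$. That loss occurs precisely in the unequal-scale regime $d\gg r$ where \eqref{eq:two-high} is applied.

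Also, \eqref{eq:thick} is not the right tool here, because your cells live in $(u,v)$ while $S_\theta$ is a polynomial in $(p,t)$. The paper instead uses the smooth cutoff $\chi(S_\theta/(\eta B_\theta))$ with \eqref{eq:sublevel}. It then uses $\norm{\nabla S_\theta}_\infty\lesssim B_\theta$ together with $|p-p_0|+|t-t_0|\lesssim K_H\Delta$ to keep each retained cell inside $\{|S_\theta|\ge\eta B_\theta/2\}$ with fixed sign.

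The following parts of your proposal are fine:
\begin{itemize}
\item your direct bound;
\item your thresholding in $\theta$ via Lemma~\ref{lem:direction}, in place of the paper's dyadic summation;
\item the equal-scale case via \eqref{eq:triangular} and Lemma~\ref{lem:direction} applied to a large homogeneous piece of $Q(z+R\cdot)-Q(z)$.
\end{itemize}
The $L^\infty$/$L^1$ interpolation is unnecessary: sup bounds on both rough factors over cells of side at most $1$ already give the permitted $2^{2\eta_0s}\norm\Omega_1^2$.
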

\begin{proof}
	Using \eqref{eq:cyl}--\eqref{eq:directions}, set
	\[
	F_\theta(v)=\Omega_s\left(
	\frac{Lve+\theta}{\sqrt{1+L^2v^2}}\right),\qquad
	G_\theta(u)=\overline{\Omega_s\left(
		\frac{ue+\theta}{\sqrt{1+u^2}}\right)}.
	\]
	Then $|F_\theta|,|G_\theta|\le2^{\eta_0s}\norm\Omega_1$ and
	\begin{equation}\label{eq:section-integral}
		\begin{aligned}
			I&=\int_{\Sigma_e}I_\theta\,d\sigma_e(\theta),\\
			I_\theta&=R^{-n}\int_{-C}^C\int_0^C
			e^{iq_\theta(p,t)}a_\theta(p,t)t^{n-2}
			F_\theta(v)G_\theta(u)\,dt\,dp.
		\end{aligned}
	\end{equation}
	In particular,
	\begin{equation}\label{eq:section-direct}
		|I_\theta|\le C2^{2\eta_0s}\norm\Omega_1^2R^{-n}.
	\end{equation}
	Write
	\[
	K_H=1+H^{-1},\qquad
	B_\theta=\norm{S_\theta}_\coef,\qquad
	\Lambda_\theta=\frac L{H^2}B_\theta.
	\]
	Fix $\theta$ with $B_\theta>0$; otherwise we use
	\eqref{eq:section-direct}.
	
	By \eqref{eq:mixed}, the mixed derivative can be small
	when $t$ or $|S_\theta|/B_\theta$ is small. Choose an even
	function $\chi\in C^\infty(\R)$ with $0\le\chi\le1$,
	$\chi=0$ on $[-1,1]$, and $\chi=1$ outside $(-2,2)$.
	For $0<\varepsilon,\eta\le1$, set
	\[
	\chi_t=\chi(t/\varepsilon),\qquad
	\chi_S=\chi(S_\theta/(\eta B_\theta)).
	\]
	Split $I_\theta$ by inserting
	$1=(1-\chi_t\chi_S)+\chi_t\chi_S$.
	Since
	\[
	0\le1-\chi_t\chi_S
	\le\one_{\{t<2\varepsilon\}}
	+\one_{\{|S_\theta|<2\eta B_\theta\}},
	\]
	Lemma~\ref{lem:sublevel} gives, on a fixed box $U_*$,
	\begin{equation}\label{eq:bad-section}
		\begin{aligned}
			|I_{\theta,\mathrm{bad}}|
			&\le C2^{2\eta_0s}\norm\Omega_1^2R^{-n}
			\left[\int_{-C}^C\int_0^{2\varepsilon}t^{n-2}\,dt\,dp
			+\abs{\{|S_\theta|<2\eta B_\theta\}\cap U_*}\right]\\
			&\le C2^{2\eta_0s}\norm\Omega_1^2R^{-n}
			(\varepsilon^{n-1}+\eta^{1/D}).
		\end{aligned}
	\end{equation}
	When $2\eta\ge1$, the second term is estimated by $|U_*|$.
	
	On the support of the remaining amplitude,
	\[
	t\ge\varepsilon,\qquad |S_\theta|\ge\eta B_\theta,
	\qquad |u|+|v|\le C\varepsilon^{-1}.
	\]
	To apply Lemma~\ref{lem:rect}, these lower bounds must
	hold on entire rectangles. Take a square grid of sidelength
	\begin{equation}\label{eq:grid}
		\Delta=c_1\varepsilon\eta K_H^{-1},
	\end{equation}
	with $c_1>0$ sufficiently small, and a smooth partition
	of unity satisfying
	\begin{equation}\label{eq:partition-good}
		\sum_\nu\psi_\nu=1,\qquad
		0\le\psi_\nu\le1,\qquad
		\psi_\nu\in C_c^\infty(2Q_\nu),\qquad
		\norm{\nabla\psi_\nu}_\infty\le C\Delta^{-1}.
	\end{equation}
	Retain only those $2Q_\nu$ meeting the support of
	$a_\theta\chi_t\chi_S$ in the $(u,v)$ coordinates. Then
	\begin{equation}\label{eq:rect-count}
		\operatorname{card}\{\nu\}
		\le C\varepsilon^{-2}\Delta^{-2}
		\le CK_H^2\varepsilon^{-4}\eta^{-2}.
	\end{equation}
	
	Fix a retained square and choose a point $(u_0,v_0)$
	in this intersection, with corresponding coordinates
	$(p_0,t_0)$. For $(u,v)\in4Q_\nu$,
	\[
	\left|\frac{u-Lv}{H}-\frac1{t_0}\right|
	\le CK_H\Delta\le\frac1{2t_0},
	\qquad \frac23t_0\le t\le2t_0.
	\]
	Moreover,
	\[
	t-t_0=-tt_0\left(\frac1t-\frac1{t_0}\right),\qquad
	p-p_0=t(u-u_0)-tp_0\left(\frac1t-\frac1{t_0}\right),
	\]
	so $|p-p_0|+|t-t_0|\le CK_H\Delta$.
	Since $\norm{\nabla S_\theta}_\infty\le CB_\theta$
	on a fixed enlarged box,
	\[
	|S_\theta(p,t)-S_\theta(p_0,t_0)|
	\le CB_\theta K_H\Delta\le\eta B_\theta/2.
	\]
	Thus, on the whole of $4Q_\nu$,
	\begin{equation}\label{eq:expanded-good}
		t\ge\varepsilon/2,\qquad |S_\theta|\ge\eta B_\theta/2,
	\end{equation}
	and $S_\theta$ has the same sign as at $(p_0,t_0)$.
	
	Write the interior of $4Q_\nu$ as $U_\nu\times V_\nu$.
	Its sidelengths are at most $1$ after decreasing $c_1$.
	Since $t^{n-2}\,dp\,dt=(L/H)t^{n+1}\,du\,dv$, define
	\[
	A_{\theta,\nu}
	=\frac LHt^{n+1}a_\theta(p,t)\chi_t\chi_S\psi_\nu,
	\qquad
	\mathcal V_{\theta,\nu}f(u)
	=\int_{V_\nu}e^{i\Phi_\theta(u,v)}
	A_{\theta,\nu}(u,v)f(v)\,dv.
	\]
	The amplitude belongs to $C_c^1(U_\nu\times V_\nu)$.
	On this rectangle, $|p|+t\le C$ and
	\[
	p_v=\frac LHpt,\qquad t_v=\frac LHt^2,\qquad
	|p_v|+|t_v|\le CK_H.
	\]
	Consequently,
	\[
	|\partial_v\chi_t|\le CK_H\varepsilon^{-1},\qquad
	|\partial_v\chi_S|\le CK_H\eta^{-1},\qquad
	|\partial_v\psi_\nu|\le C\Delta^{-1}.
	\]
	The product rule and the bounds for $a_\theta$ give
	\begin{equation}\label{eq:amp-good}
		\begin{aligned}
			\norm{A_{\theta,\nu}}_\infty
			+\norm{\partial_vA_{\theta,\nu}}_\infty
			&\le C\bigl[K_H^2(1+\varepsilon^{-1}+\eta^{-1})
			+K_H\Delta^{-1}\bigr]\\
			&\le CK_H^2\varepsilon^{-2}\eta^{-2}.
		\end{aligned}
	\end{equation}
	By \eqref{eq:mixed}, \eqref{eq:expanded-good}, and
	$\partial_v=(Lt/H)E_0$,
	\begin{equation}\label{eq:mixed-good}
		\begin{aligned}
			|\Phi_{\theta,uv}|&\ge c\varepsilon^2\eta\Lambda_\theta,\\
			\Phi_{\theta,uvv}
			&=-\frac{L^2t^3}{H^3}(E_0+2)S_\theta,\qquad
			|\Phi_{\theta,uvv}|\le CK_H\Lambda_\theta.
		\end{aligned}
	\end{equation}
	The mixed derivative has constant sign on the rectangle.
	For $\lambda=c\varepsilon^2\eta\Lambda_\theta\ge2$,
	Lemma~\ref{lem:rect} applies with
	$M_\Phi\le CK_H\varepsilon^{-2}\eta^{-1}$, and yields
	\begin{equation}\label{eq:operator-good}
		\begin{aligned}
			\norm{\mathcal V_{\theta,\nu}}_{2\to2}
			&\le CK_H^{5/2}\varepsilon^{-7/2}\eta^{-11/4}
			\Lambda_\theta^{-1/4}\\
			&\le CK_H^4\varepsilon^{-4}\eta^{-4}
			\Lambda_\theta^{-1/4}.
		\end{aligned}
	\end{equation}
	The rough factors are estimated in $L^2$, without
	being differentiated:
	\[
	\begin{aligned}
		&R^{-n}\left|\int_{U_\nu}G_\theta(u)
		\mathcal V_{\theta,\nu}(F_\theta\one_{V_\nu})(u)\,du\right|\\
		&\quad\le R^{-n}\norm{G_\theta}_{L^2(U_\nu)}
		\norm{\mathcal V_{\theta,\nu}}_{2\to2}
		\norm{F_\theta}_{L^2(V_\nu)}\\
		&\quad\le C2^{2\eta_0s}\norm\Omega_1^2R^{-n}
		K_H^4\varepsilon^{-4}\eta^{-4}\Lambda_\theta^{-1/4}.
	\end{aligned}
	\]
	Summing over the retained squares by \eqref{eq:rect-count},
	and adding \eqref{eq:bad-section}, gives
	\begin{equation}\label{eq:pre-opt}
		|I_\theta|\le C2^{2\eta_0s}\norm\Omega_1^2R^{-n}
		\left[\varepsilon^{n-1}+\eta^{1/D}
		+K_H^8\varepsilon^{-8}\eta^{-8}\Lambda_\theta^{-1/4}\right].
	\end{equation}
	For $\Lambda_\theta$ sufficiently large, take
	$\varepsilon=\eta=\Lambda_\theta^{-1/256}$.
	Then $\lambda=c\Lambda_\theta^{253/256}\ge2$ and
	\[
	\varepsilon^{n-1}=\Lambda_\theta^{-(n-1)/256},\qquad
	\eta^{1/D}=\Lambda_\theta^{-1/(256D)},\qquad
	\varepsilon^{-8}\eta^{-8}\Lambda_\theta^{-1/4}
	=\Lambda_\theta^{-3/16}.
	\]
	Together with \eqref{eq:section-direct}, this proves
	\begin{equation}\label{eq:section-decay}
		|I_\theta|\le C2^{2\eta_0s}\norm\Omega_1^2R^{-n}
		\min\{1,K_H^8\Lambda_\theta^{-\delta}\},
		\qquad \delta=\frac1{256D}.
	\end{equation}
	
	It remains to integrate over $\theta$. The highest
	homogeneous component of $S_\theta$ is
	\[
	(S_\theta)_m(p,t)
	=m(m+1)r^m\sum_{a=0}^m p^at^{m-a}C_a^{e,Q_m}(\theta).
	\]
	Hence
	\[
	B_\theta\ge m(m+1)r^m
	\sum_{a=0}^m|C_a^{e,Q_m}(\theta)|.
	\]
	Lemma~\ref{lem:direction} gives
	\[
	\sigma_e\{\theta:B_\theta\le t r^mA_m\}
	\le Ct^\alpha,\qquad 0<t<1,\qquad \alpha=\frac1{2D}.
	\]

	 Since $\delta<\alpha$, we use a dyadic decomposition
to estimate the integral. Set
\[
E_k=\left\{\theta\in\Sigma_e:
2^{-(k+1)}<\frac{B_\theta}{r^mA_m}\le2^{-k}
\right\},
\qquad k\ge0.
\]
On $E_k$, we have
\[
\left(\frac{B_\theta}{r^mA_m}\right)^{-\delta}
\le2^{(k+1)\delta},
\qquad
\sigma_e(E_k)\le C2^{-k\alpha}.
\]
Since $B_\theta>0$ almost everywhere and
$\sigma_e(\Sigma_e)\le C$, it follows that
\[
\begin{aligned}
	\int_{\Sigma_e}
	\left(\frac{B_\theta}{r^mA_m}\right)^{-\delta}
	\,d\sigma_e(\theta)
	&\quad=
	\int_{\{B_\theta>r^mA_m\}}
	\left(\frac{B_\theta}{r^mA_m}\right)^{-\delta}
	\,d\sigma_e(\theta)\\
	&\qquad+
	\sum_{k\ge0}\int_{E_k}
	\left(\frac{B_\theta}{r^mA_m}\right)^{-\delta}
	\,d\sigma_e(\theta)\\
	&\quad\le C+\sum_{k\ge0}
	2^{(k+1)\delta}\sigma_e(E_k)\\
	&\quad\le C+C\sum_{k\ge0}2^{-k(\alpha-\delta)}
	\le C.
\end{aligned}
\]

	Since $L\ge H=d/r$, we have
	\[
	\Lambda_\theta
	=\frac{L}{H^2}B_\theta
	\ge\frac{r}{d}B_\theta
	=\frac{r^{m+1}A_m}{d}
	\frac{B_\theta}{r^mA_m}.
	\]
	Consequently,
	\[
	\begin{aligned}
		\int_{\Sigma_e}\Lambda_\theta^{-\delta}
		\,d\sigma_e(\theta)
		&\le
		\left(\frac{d}{r^{m+1}A_m}\right)^\delta
		\int_{\Sigma_e}
		\left(\frac{B_\theta}{r^mA_m}\right)^{-\delta}
		\,d\sigma_e(\theta)\\
		&\le C\left(\frac{d}{r^{m+1}A_m}\right)^\delta.
	\end{aligned}
	\]
	Integrating \eqref{eq:section-decay} over $\theta$,
	we obtain
	\[
	\begin{aligned}
		|I|
		&\le\int_{\Sigma_e}|I_\theta|\,d\sigma_e(\theta)\\
		&\le C2^{2\eta_0s}\norm\Omega_1^2R^{-n}
		K_H^8\int_{\Sigma_e}\Lambda_\theta^{-\delta}
		\,d\sigma_e(\theta)\\
		&\le C2^{2\eta_0s}\norm\Omega_1^2R^{-n}
		K_H^8\left(\frac{d}{r^{m+1}A_m}\right)^\delta.
	\end{aligned}
	\]
	Finally, since $c=1/(258D)<\delta$ and
	$N=16>8$, the second term in the minimum
	is bounded by
	\[
	K_H^N\left(\frac{d}{r^{m+1}A_m}\right)^c
	\]
	whenever $r^{m+1}A_m/d\ge1$.
	When $r^{m+1}A_m/d<1$, we use the direct
	bound. This proves \eqref{eq:two-high}.

	For the equal-scale estimate, set
	$\widetilde Q(w)=Q(z+Rw)-Q(z)$, so that
	$\norm{\widetilde Q}_\coef=A_Q$.
	Since $H\le C$, \eqref{eq:triangular} gives
	\[
	\begin{aligned}
		cA_Q
		&\le\sup_{|w|\le1}|\widetilde Q(w)|
		\le\sup_{\theta\in\Sigma_e}\norm{q_\theta}_\nc
		\le C\sup_{\theta\in\Sigma_e}B_\theta.
	\end{aligned}
	\]
	Write $S_\theta(p,t)=\sum_{a+b\le D}s_{ab}(\theta)p^at^b$.
	Each $s_{ab}$ is a polynomial in $\theta$ of degree at most
	$D$, and at least one satisfies
	$\norm{s_{ab}}_{L^\infty(\Sigma_e)}\ge cA_Q$.
	For $n\ge3$, Lemma~\ref{lem:sphere} therefore gives
	\[
	\sigma_e\{\theta:B_\theta\le tA_Q\}\le Ct^\alpha,
	\qquad 0<t<1.
	\]
	For $n=2$, the same estimate follows from
	$S_{-\theta}(p,t)=S_\theta(p,-t)$,
	$B_{-\theta}=B_\theta$, and the preceding lower bound.
	The same dyadic summation now yields
	\[
	\int_{\Sigma_e}(B_\theta/A_Q)^{-\delta}\,d\sigma_e(\theta)
	\le C.
	\]
	Since $L/H^2\ge c$ for bounded $H$, integrating
	\eqref{eq:section-decay} gives
	\[
	|I|\le C2^{2\eta_0s}\norm\Omega_1^2R^{-n}
	\min\{1,K_H^8A_Q^{-\delta}\}.
	\]
	Together with the direct bound when $A_Q\le1$, this
	implies \eqref{eq:two-equal} with the same $c$ and $N$.
\end{proof}

\smallskip

\emph{Proof of Theorem 3.1.}

We first consider correlation kernel estimates for the high positive scale part.

\emph{{1. The adjoint and the kernel of $T_{j,\Omega_s}^*T_{k,\Omega_s}$.}}
Fix $s\ge s_*$ and a finite set $\mathcal J\subset\{j>\varrho s\}$. Write
\[
 L_j(x,y)=e^{iP(x,y)}\Omega_s(x-y)k_j(x-y),\qquad
 T_{j,\Omega_s}f(x)=\int L_j(x,y)f(y)\,dy.
\]
The adjoint and the two compositions are given by
\begin{align}
 T_{j,\Omega_s}^*g(y)&=\int\overline{L_j(x,y)}g(x)\,dx,\notag\\
 (T_{k,\Omega_s}T_{j,\Omega_s}^*g)(x)
 &=\int\left[\int L_k(x,y)\overline{L_j(x',y)}\,dy\right]g(x')\,dx',\label{eq:actual-TTstar}\\
 (T_{j,\Omega_s}^*T_{k,\Omega_s}f)(z)
 &=\int\left[\int L_k(x,y)\overline{L_j(x,z)}\,dx\right]f(y)\,dy.\label{eq:actual-TstarT}
\end{align}
When expanding the square of the $L^2$ norm below, we use the kernel in $y,z$ obtained by integration in $x$ in \eqref{eq:actual-TstarT}. More precisely,
\begin{align}
 G_{jk}(y,z)
 &:=\int L_k(x,y)\overline{L_j(x,z)}\,dx\notag\\
 &=\int e^{i[P(x,y)-P(x,z)]}\Omega_s(x-y)\overline{\Omega_s(x-z)}
 k_k(x-y)\overline{k_j(x-z)}\,dx.\label{eq:correlation}
\end{align}
Thus, with the inner product $\langle f,g\rangle=\int f\overline g$,
\begin{align}
 \norm{\sum_{j\in\mathcal J}T_{j,\Omega_s}B_{j-s}}_2^2
 &=\sum_{j,k\in\mathcal J}
 \langle T_{k,\Omega_s}B_{k-s},T_{j,\Omega_s}B_{j-s}\rangle\notag\\
 &=\sum_{j,k\in\mathcal J}\langle T_{j,\Omega_s}^*T_{k,\Omega_s}B_{k-s},B_{j-s}\rangle\notag\\
 &=\sum_{j,k\in\mathcal J}\iint G_{jk}(y,z)
 B_{k-s}(y)\overline{B_{j-s}(z)}\,dy\,dz.\label{eq:TT-full}
\end{align}
If $j\ge k$, $R=2^j$, and $r=2^k$, then
\begin{align}
 \supp G_{jk}&\subset\{|y-z|\le CR\},\notag\end{align} and 
 \begin{align}|G_{jk}(y,z)|&\le \int|L_k(x,y)L_j(x,z)|\,dx\notag\\
 &\le C2^{2\eta_0s}\norm\Omega_1^2 r^{-n}R^{-n}\abs{B(y,Cr)}\notag\\&\le C2^{2\eta_0s}\norm\Omega_1^2 R^{-n}.\label{eq:high-direct}
\end{align}

\medskip
\noindent{\emph{2. Entire bad cubes intersecting the small sets.}}
By \eqref{eq:normal}, choose $q=q_{\alpha_*}$ so that the sum of the absolute values of its degree-$l$ coefficients is at least $c$. Fix $j,z$ and set $R=2^j,$
\begin{align*}
 \mathcal N&=\{y:|y-z|\le R2^{-\beta_Hs}\},\\
 \mathcal Z&=\{y:|y-z|\le CR,\ |q(y)-q(z)|\le R2^{-\beta_Hs}\},\\
 \mathcal G&=B(z,CR)\setminus(\mathcal N\cup\mathcal Z),
\end{align*} where $\beta_H>0$ will be chosen below.

When $\varrho s<k\le j$, the relevant bad cubes satisfy $\ell(Q)=2^{k-s}\le R2^{-s}$, and the cubes $Q$ are pairwise disjoint across all scales. Hence for $ \mathcal N$
\begin{align}
 \sum_{\varrho s<k\le j}\int_{\mathcal N}|B_{k-s}(y)|\,dy
 &\le\sum_{\substack{L(Q)\le j-s\\Q\cap\mathcal N\ne\varnothing}}\norm{b_Q}_1\notag\\
 &\le C\tau\abs{\bigcup_{Q\cap\mathcal N\ne\varnothing}Q}
 \le C\tau R^n(2^{-\beta_Hs}+2^{-s})^n.\label{eq:high-near}
\end{align}
For $\mathcal Z$, set $y=z+Ru$ and $F_z(u)=q(z+Ru)-q(z)$. Its highest-degree coefficients satisfy
\[
 \norm{F_z}_\coef\ge R^l\sum_{|\beta|=l}|[y^\beta]q(y)|
 \ge cR^l\ge cR,
\]
where $R=2^j>1$ and $l\ge1$. Thus
\[
 0<B_0:=R\le C\norm{F_z}_\coef,
\]
satisfies the hypothesis $B_0\le CB$ of Lemma \ref{lem:sublevel}, with $B=\norm{F_z}_\coef$.
If $L(Q)=k-s$ and $\varrho s<k\le j$, then under the rescaling $u=R^{-1}(y-z)$,
\[
 \ell(R^{-1}(Q-z))=R^{-1}2^{k-s}
 =2^{k-j-s}\le2^{-s}.
\]
and every normalized bad cube contributing to $\mathcal Z$ intersects
\[
 \{u\in B(0,C):|F_z(u)|\le R2^{-\beta_Hs}\}
 =\{u\in B(0,C):|F_z(u)|\le tB_0\},
 \qquad t=2^{-\beta_Hs},
\]
Thus, taking in \eqref{eq:thick} of Lemma \ref{lem:sublevel}
\[
 B_0=R,\qquad t=2^{-\beta_Hs},\qquad \ell=2^{-s},
\]
we obtain
\begin{equation}\label{eq:high-zero}
 \sum_{\varrho s<k\le j}\int_{\mathcal Z}|B_{k-s}(y)|\,dy
 \le C\tau R^n(2^{-\beta_Hs}+2^{-s})^{1/D}.
\end{equation}

\medskip
\noindent{\emph{3. Oscillatory decay on the good region and computation of the exponents.}}
For $y\in\mathcal G$, the phase $Q(x)=P(x,y)-P(x,z)$ has highest $x$-degree $m$, and
\[
 A_m=\sum_{|\alpha|=m}|q_\alpha(y)-q_\alpha(z)|
 \ge|q(y)-q(z)|>R2^{-\beta_Hs},\qquad
 R2^{-\beta_Hs}<d=|y-z|\le CR.
\]
In the correlation kernel, the smaller-scale center is $y$ and the larger-scale center is $z$. Apply Lemma \ref{lem:twocenter} to $\overline{G_{jk}(y,z)}$, interchanging the two centers in the lemma and using phase $-Q$ and amplitude $\overline{a_{jk}}$. The two rough kernel factors then agree with those in \eqref{eq:two-integral}. Complex conjugation does not change the absolute value of the correlation kernel or the amplitude bounds below.
Define
\[
 a_{jk}(x)=\left(\frac r{|x-y|}\right)^n
           \left(\frac R{|x-z|}\right)^n
 \phi(r^{-1}|x-y|)\overline{\phi(R^{-1}|x-z|)},
\] where $R=2^j$ and $r=2^k.$
Then
\[
 G_{jk}(y,z)=R^{-n}r^{-n}\int e^{iQ(x)}a_{jk}(x)
 \overline{\Omega_s(x-z)}\Omega_s(x-y)dx.
\]
By the annular support,
\[
 |x-y|\simeq r,
 \qquad |x-z|\simeq R.
\]
In the section coordinates $x=y+r(pe+t\theta)$ used in the lemma,
$|x-y|\simeq r$ implies $p^2+t^2\simeq1$, so that $|p|\le C$ and $0<t\le C$. The smaller-scale factor is a fixed $C^\infty$ annular function. If
\[
 a_R(x)=\left(\frac R{|x-z|}\right)^n\overline{\phi(R^{-1}|x-z|)},
\]
then, for $a+b\le2$,
\[
 |\partial_p^a\partial_t^b[a_R(y+r(pe+t\theta))]|
 \le C\left(\frac rR\right)^{a+b}\le C.
\]
The product rule therefore gives
\[
 \sup_\theta\max_{a+b\le2}
 \norm{\partial_p^a\partial_t^b
       [a_{jk}(y+r(pe+t\theta))]}_\infty\le C,
\]
Thus all the support and $C^2$ amplitude hypotheses of Lemma \ref{lem:twocenter} hold, with constants independent of $j,k,y,z$. By \eqref{eq:two-high},
\[
 |G_{jk}(y,z)|\le C2^{2\eta_0s}\norm\Omega_1^2 R^{-n}
 \left(1+\frac rd\right)^N
 \left(\frac d{r^{m+1}A_m}\right)^c.
\]
The individual factors satisfy
\begin{align*}
 1+\frac rd
 \le1+\frac rR2^{\beta_Hs}\le2\cdot2^{\beta_Hs},\end{align*} and \begin{align*}
 \frac d{r^{m+1}A_m}
 &\le\frac{CR}{r^{m+1}R2^{-\beta_Hs}}\\&
 =Cr^{-(m+1)}2^{\beta_Hs}\le C2^{[\beta_H-(m+1)\varrho]s},
 \qquad r=2^k>2^{\varrho s}.
\end{align*}
Hence
\begin{align}
 |G_{jk}(y,z)|
 &\le C2^{2\eta_0s}\norm\Omega_1^2 R^{-n}2^{N\beta_Hs}
                      2^{c[\beta_H-(m+1)\varrho]s}\notag\\
 &=C2^{2\eta_0s}\norm\Omega_1^2 R^{-n}2^{[(N+c)\beta_H-c(m+1)\varrho]s}.\label{eq:high-good}
\end{align}
Set
\[
 \beta_H=\frac{c\varrho}{4(N+c)}<1,\qquad
 \kappa_1=\frac{\beta_H}{D}>0.
\]
Since $m\ge1$,
\[
 (N+c)\beta_H-c(m+1)\varrho
 =c\varrho\left(\tfrac14-m-1\right)
 \le-\tfrac74c\varrho<-\kappa_1.
\]
The union of all relevant bad cubes intersecting $B(z,CR)$ is contained in $B(z,C'R)$. Hence
\begin{align}\label{Ch}
 \sum_{\varrho s<k\le j}\int_{B(z,CR)}|B_{k-s}(y)|\,dy
 \le C\tau\abs{B(z,C'R)}\le C\tau R^n.
\end{align}

For the sets \(\mathcal{N}\) and \(\mathcal{Z}\), we apply \eqref{eq:high-direct}, \eqref{eq:high-near}, and \eqref{eq:high-zero}, whereas for \(\mathcal{G}\) we use \eqref{eq:high-good} and \eqref{Ch}. Combining these estimates, we obtain
\begin{align}
 \sum_{\varrho s<k\le j}\int|G_{jk}(y,z)||B_{k-s}(y)|\,dy
 &\le C\tau 2^{2\eta_0s}\norm\Omega_1^2\left[(2^{-\beta_Hs}+2^{-s})^n
 +(2^{-\beta_Hs}+2^{-s})^{1/D}+2^{-7c\varrho s/4}\right]\notag\\
 &\le C\tau 2^{2\eta_0s}\norm\Omega_1^2\,2^{-\kappa_1s}.\label{eq:high-schur}
\end{align}

\medskip
\noindent{\emph{4. Completion of the correlation sum.}}
Using $G_{kj}(z,y)=\overline{G_{jk}(y,z)}$, \eqref{eq:TT-full}, and \eqref{eq:high-schur}, we obtain
\begin{align*}
 \norm{\sum_{j\in\mathcal J}T_{j,\Omega_s}B_{j-s}}_2^2
 &\le2\sum_{j\in\mathcal J}\int|B_{j-s}(z)|
 \sum_{\substack{k\in\mathcal J\\k\le j}}
 \int|G_{jk}(y,z)||B_{k-s}(y)|\,dy\,dz\\
 &\le C\tau 2^{2\eta_0s}\norm\Omega_1^2\,2^{-\kappa_1s}\sum_{j\in\mathcal J}\norm{B_{j-s}}_1.
\end{align*}
This is \eqref{eq:prop-high}. The same estimate holds for finite tails, and $\sum_k\norm{B_k}_1<\infty$. Hence the corresponding high-scale series converges in $L^2$. 
This completes the proof of Theorem 3.1.
\Needspace{8\baselineskip}
\section{Proof of Theorem \ref{prop:lowfull}}\label{sec:reduction}
The local-scale part $T_{\le0}$, the good function, and the contribution
of $\nu_s$ have been controlled in \eqref{eq:T0weak}, \eqref{eq:good},
and \eqref{eq:large-angular}, respectively.
Section \ref{sec:high} proves the estimate for $j>\varrho s$.
It therefore remains to estimate
\[
 \sum_{s\ge s_*}\sum_{1\le j\le\varrho s}
 T_{j,\Omega_s}B_{j-s}.
\]
By the summation in Section \ref{sec:main-proof}, it suffices to establish
the decomposition in Theorem \ref{prop:lowfull}, with $L^2$ decay
for its main term and $L^1$ decay for its two remainders.

Fix $s\ge s_*$ and $1\le j\le\varrho s$, and set $R=2^j$.
In the proof of \eqref{eq:high-good}, the annular scales are larger
than $2^{\varrho s}$, and this lower bound yields decay in $s$.
In the present range, $j$ may remain fixed as $s$ increases, so we
cannot use the same lower bound. On the other hand, every bad cube
occurring in $B_{j-s}$ has sidelength
\[
 \ell(Q)=2^{j-s}=R\cdot 2^{-s}.
\]
We therefore seek to use the separation between the annular scale and
the scale of the bad cubes. Seeger's convolution estimates provide
decay from this separation, but the mixed phase prevents us from
applying them directly to $T_{j,\Omega_s}$.
We first localize the operator and expand the mixed phase by Fourier series.
The dependence of the Fourier coefficient bounds on the mixed phase
will then determine which localized terms can be treated in this way.

\subsection{Localization at a fixed positive annular scale}\label{subsec:localization}
The annular support restricts the interaction to $|x-y|\simeq R$.
Localizing both variables to boxes of this size allows us to measure the
mixed coefficients on a fixed box after rescaling. The localization must
also preserve the total $L^1$ mass of the bad functions up to a fixed
factor, so that the local estimates can later be summed over $h$.
Let $I_h=R\cdot(h+[0,1)^n)$, $h\in\Z^n$. Set
\begin{equation}\label{eq:parent-cubes}
 \mathcal Q_h=\{Q:L(Q)=j-s,\ \dist(Q,I_h)\le CR\},\qquad
 b_h=\sum_{Q\in\mathcal Q_h}b_Q,
\end{equation}
Here $C$ exceeds the fixed constant required by the annular support. Each fixed $Q$ belongs to only a bounded number of the families $\mathcal Q_h$: after division by $R$, the admissible $h$ lie among the lattice points in a bounded region. Thus
\begin{equation}\label{eq:parent-overlap}
 \sum_h\sum_{Q\in\mathcal Q_h}\norm{b_Q}_1\le C\norm{B_{j-s}}_1.
\end{equation}
Choose $\chi_h^{x},\chi_h^{y}\in C_c^\infty$ taking values in $[0,1]$, equal to $1$ on $I_h$ and $\bigcup_{Q\in\mathcal Q_h}Q$, respectively, and supported in a fixed dilate of $I_h$, with derivatives of order $a$ bounded by $O_a(R^{-a})$. Define
\begin{equation}\label{eq:local-operator}
 T_{j,\Omega_s}^{(h)}u=\chi_h^{x}T_{j,\Omega_s}(\chi_h^{y}u).
\end{equation}
The support of the kernel gives
\begin{equation}\label{eq:parent-identity}
 \one_{I_h}T_{j,\Omega_s}B_{j-s}=\one_{I_h}T_{j,\Omega_s}^{(h)}b_h.
\end{equation}
The derivative estimates required for localization apply only to the smooth cutoffs. At the end, we restrict each localized term to $I_h$ and sum; no derivative is applied to $\one_{I_h}$.

The factors depending on only one variable can be kept as unimodular
multipliers. We therefore isolate the part that couples $x$ and $y$.
Set $x_h=R\cdot h$ and define directly in the original variables
\begin{align}\label{eq:local-phase}
 \Pi_h(x,y)&=P(x,y)-P(x,x_h)-P(x_h,y)+P(x_h,x_h),\end{align} and \begin{align}
 A_h(x)&=P(x,x_h)-P(x_h,x_h),\qquad
 C_h(y)=P(x_h,y)-P(x_h,x_h).\notag
\end{align}
Then
\begin{equation}\label{eq:phase-split}
 P(x,y)=P(x_h,x_h)+A_h(x)+C_h(y)+\Pi_h(x,y).
\end{equation}
Expand the mixed part at scale $R$:
\[
 \Pi_h(x,y)=\sum_{|\alpha|,|\beta|\ge1}
 p_{\alpha\beta}^h
 \left(\frac{x-x_h}{R}\right)^\alpha
 \left(\frac{y-x_h}{R}\right)^\beta,
 \qquad\mathfrak M_h=\sum_{\alpha,\beta}|p_{\alpha\beta}^h|.
\]
Every nonzero term satisfies $|\alpha|+|\beta|\le D$. On the localized support,
$|x-x_h|+|y-x_h|\le CR$, so derivatives of order $a$ of the mixed phase
are bounded by $C\mathfrak M_hR^{-a}$.
We next keep track of the dependence on $\mathfrak M_h$ when separating
the mixed phase by Fourier expansion.

\subsection{Fourier expansion of the mixed phase}\label{subsec:fourier}
To obtain convolution operators, we expand the smooth mixed phase
factor into products of functions of $x$ and functions of $y$.
After estimating each Fourier mode, we must sum over all frequencies.
Thus we need an absolutely summable bound for the Fourier coefficients,
with its dependence on the polynomial coefficients made explicit.
We also include the weight needed for the mean correction below.
The following lemma gives this bound without assuming that the
coefficients are small.

\begin{lemma}\label{lem:fourier}
Fix $L_0>0$ and $\chi_1,\chi_2\in C_c^\infty((-L_0,L_0)^n)$. Let $\Pi$ be a real polynomial of degree at most $D$, and set $\mathfrak M=\norm\Pi_\coef$ and $\gamma=(\gamma_1,\gamma_2)\in\Z^{2n}$. On $\mathbb T_{L_0}^{2n}=(\R/(2L_0\Z))^{2n}$, write
\[
 F(x,y)=\chi_1(x)\chi_2(y)e^{i\Pi(x,y)}
 =\sum_{\gamma\in\Z^{2n}}c_{\gamma}e^{i(\pi/L_0)(\gamma_1\cdot x+\gamma_2\cdot y)}.
\]
Then
\begin{align}
 |c_{\gamma}|\le C(1+\mathfrak M)^{4n+4}
 (1+|\gamma_1|+|\gamma_2|)^{-4n-4},\label{eq:Fourier-coefficient-lemma}
 \end{align} and \begin{align}\sum_{\gamma}|c_{\gamma}|(1+|\gamma_2|+\mathfrak M)
 \le C(1+\mathfrak M)^{4n+5}
 \le C(1+\mathfrak M)^{N_0},\qquad N_0=4n+6.\label{eq:Fourier-weight-lemma}
\end{align}
The constant may also depend on the fixed torus and cutoff functions.
\end{lemma}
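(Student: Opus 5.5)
The plan is to prove the coefficient decay \eqref{eq:Fourier-coefficient-lemma} by repeated integration by parts on the torus, and then deduce \eqref{eq:Fourier-weight-lemma} by summing over $\gamma$. Since $\chi_1\chi_2$ is supported in the open cube $(-L_0,L_0)^{2n}$, the function $F$ extends to a smooth $2L_0$-periodic function on $\R^{2n}$. Its Fourier coefficients are
\[
c_\gamma=(2L_0)^{-2n}\int_{(-L_0,L_0)^{2n}}F(x,y)\,e^{-i(\pi/L_0)(\gamma_1\cdot x+\gamma_2\cdot y)}\,dx\,dy .
\]
There are no boundary terms, so for any multi-index $\mu$ of order $|\mu|=k$ on $\R^{2n}$ we have
\[
|(\pi/L_0)^{k}\gamma^\mu c_\gamma|\le C\norm{\partial^\mu F}_{L^\infty}.
\]
Taking $k=4n+4$ and choosing $\mu$ along a coordinate direction where $|\gamma_i|$ is maximal gives
\[
|c_\gamma|\le C(1+|\gamma|)^{-4n-4}\max_{|\mu|\le 4n+4}\norm{\partial^\mu F}_\infty .
\]
For $\gamma=0$ we use the trivial bound $|c_0|\le C$.

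The key quantitative step is to bound the derivatives of $F$ polynomially in $\mathfrak M$. On the compact box $[-L_0,L_0]^{2n}$, every derivative $\partial^\nu\Pi$ of order $\ge1$ is bounded by $C\mathfrak M$, with $C$ depending on $L_0$ and $D$. By the Faà di Bruno formula, $\partial^\mu e^{i\Pi}$ is $e^{i\Pi}$ times a sum of products of at most $|\mu|$ derivatives of $\Pi$. Hence
\[
|\partial^\mu e^{i\Pi}|\le C(1+\mathfrak M)^{|\mu|}.
\]
The Leibniz rule with the fixed cutoffs $\chi_1,\chi_2$ then gives
\[
\norm{\partial^\mu F}_\infty\le C(1+\mathfrak M)^{|\mu|}.
\]
With $|\mu|=4n+4$ this yields \eqref{eq:Fourier-coefficient-lemma}.

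For \eqref{eq:Fourier-weight-lemma} we use $1+|\gamma_2|+\mathfrak M\le(1+|\gamma|)(1+\mathfrak M)$. Then
\[
\sum_\gamma|c_\gamma|(1+|\gamma_2|+\mathfrak M)\le C(1+\mathfrak M)^{4n+5}\sum_{\gamma\in\Z^{2n}}(1+|\gamma|)^{-4n-3}.
\]
The last sum converges because $4n+3>2n$. Finally, $(1+\mathfrak M)^{4n+5}\le(1+\mathfrak M)^{N_0}$.

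The argument is routine once the setup is fixed. The only points needing care are these:
\begin{itemize}
\item The periodic extension must be smooth. This holds because the cutoffs have compact support strictly inside the fundamental domain.
\item The bound on derivatives of $\Pi$ must be uniform in terms of $\norm{\Pi}_\coef$ on the bounded box.
\end{itemize}
The main thing to watch is counting the powers of $\mathfrak M$ correctly in the Faà di Bruno expansion. A product of $r$ derivatives contributes $\mathfrak M^r$ with $r\le|\mu|$, so the exponent is $4n+4$ and not larger.
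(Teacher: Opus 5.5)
Your proof is correct and follows the paper's argument. You bound $\norm{\partial^\mu F}_\infty\le C(1+\mathfrak M)^{|\mu|}$ via Fa\`a di Bruno and Leibniz, integrate by parts on the torus, and sum using $1+|\gamma_2|+\mathfrak M\le(1+\mathfrak M)(1+|\gamma|)$; the only difference is that you integrate by parts with $\partial_i^{4n+4}$ in the dominant coordinate direction, whereas the paper applies $(1-\Delta_{x,y})^{2n+2}$, which is immaterial.
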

\begin{proof}

We first bound the derivatives needed for Fourier coefficient decay in
terms of $\mathfrak M$. The rough function $\Omega_s$ is absent from
$F$, so all differentiations here act on smooth functions.
For $1\le|\mu|\le4n+4$, differentiation of the polynomial and the product rule give
\begin{align*}
 \norm{\partial^\mu\Pi}_{L^\infty([-L_0,L_0]^{2n})}&\le C\mathfrak M,\end{align*} and
 \begin{align*}\partial^\mu e^{i\Pi}
 &=e^{i\Pi}\sum_{r=1}^{|\mu|}
 \sum_{\substack{\mu_1+\cdots+\mu_r=\mu\\|\mu_j|\ge1}}
 a_{\mu_1,\ldots,\mu_r}i^r\prod_{j=1}^r\partial^{\mu_j}\Pi.\end{align*} Then
 \begin{align*}\norm{\partial^\mu e^{i\Pi}}_\infty
 &\le C\sum_{r=1}^{|\mu|}\mathfrak M^r
 \le C(1+\mathfrak M)^{|\mu|},\end{align*}and \begin{align*}
 \max_{|\mu|\le4n+4}\norm{\partial^\mu F}_\infty
 &\le C(1+\mathfrak M)^{4n+4}.
\end{align*}
\medskip

Applying $(1-\Delta_{x,y})^{2n+2}$ produces decay of order $4n+4$
in the $2n$ Fourier variables. This order leaves enough room both to
sum all the frequencies and to include the additional weight needed
for the mean correction. Using
\[
 c_{\gamma}=(2L_0)^{-2n}\int_{[-L_0,L_0]^{2n}}
 F(x,y)e^{-i(\pi/L_0)(\gamma_1\cdot x+\gamma_2\cdot y)}\,dx\,dy
\]
and integration by parts on the torus, we obtain
\begin{align*}
 &\bigl(1+(\pi/L_0)^2(|\gamma_1|^2+|\gamma_2|^2)\bigr)^{2n+2}c_{\gamma}\\
 &\qquad=(2L_0)^{-2n}\int
 (1-\Delta_{x,y})^{2n+2}F(x,y)
 e^{-i(\pi/L_0)(\gamma_1\cdot x+\gamma_2\cdot y)}\,dx\,dy.
\end{align*}
Therefore,
\begin{align*}
 |c_{\gamma}|&\le C(1+\mathfrak M)^{4n+4}
 \bigl(1+(\pi/L_0)^2(|\gamma_1|^2+|\gamma_2|^2)\bigr)^{-2n-2}\\
 &\le C(1+\mathfrak M)^{4n+4}(1+|\gamma_1|+|\gamma_2|)^{-4n-4}.
\end{align*}
Using also $1+|\gamma_2|+\mathfrak M\le(1+\mathfrak M)(1+|\gamma_1|+|\gamma_2|)$, we obtain
\begin{align*}
 \sum_{\gamma}|c_{\gamma}|(1+|\gamma_2|+\mathfrak M)
 &\le C(1+\mathfrak M)^{4n+5}
 \sum_{\gamma}(1+|\gamma_1|+|\gamma_2|)^{-4n-3}.\end{align*}Note that \begin{align*}
 \sum_{\gamma}(1+|\gamma_1|+|\gamma_2|)^{-4n-3}
 &\le1+C\sum_{\ell\ge0}
 \operatorname{card}\{(\gamma_1,\gamma_2):2^\ell\le1+|\gamma_1|+|\gamma_2|<2^{\ell+1}\}\,2^{-(4n+3)\ell}\\
 &\le1+C\sum_{\ell\ge0}2^{2n\ell}2^{-(4n+3)\ell}\\&
 =1+C\sum_{\ell\ge0}2^{-(2n+3)\ell}<\infty.
\end{align*}
This proves \eqref{eq:Fourier-weight-lemma} and also establishes absolute and uniform convergence of the Fourier series. This completes the proof of Lemma \ref{lem:fourier}.
\end{proof}

Choose fixed $L_0$ and smooth cutoffs $\chi_1,\chi_2$ such that
$\chi_1((x-x_h)/R)\chi_2((y-x_h)/R)=1$ on the support of the localized
operator. Applying Lemma \ref{lem:fourier} after rescaling by $R$ gives
\begin{equation}\label{eq:mixed-fourier}
 \chi_1\!\left(\frac{x-x_h}{R}\right)
 \chi_2\!\left(\frac{y-x_h}{R}\right)e^{i\Pi_h(x,y)}
 =\sum_{\gamma\in\Z^{2n}}c_{\gamma}^h
 e^{i\pi\gamma_1\cdot(x-x_h)/(L_0R)}
 e^{i\pi\gamma_2\cdot(y-x_h)/(L_0R)},
\end{equation}
where $\gamma=(\gamma_1,\gamma_2)$, and
\begin{equation}\label{eq:fourier-coeff-local}
 c_{\gamma}^h=(2L_0R)^{-2n}\iint
 \chi_1\!\left(\frac{x-x_h}{R}\right)
 \chi_2\!\left(\frac{y-x_h}{R}\right)e^{i\Pi_h(x,y)}
 e^{-i\pi[\gamma_1\cdot(x-x_h)+\gamma_2\cdot(y-x_h)]/(L_0R)}\,dx\,dy.
\end{equation}
The coefficient norm of the rescaled mixed phase is $\mathfrak M_h$.
Hence
\begin{equation}\label{eq:fourier-weight-general}
 \sum_{\gamma}|c_{\gamma}^h|(1+|\gamma_2|+\mathfrak M_h)
 \le C(1+\mathfrak M_h)^{N_0}.
\end{equation}
In each Fourier mode, the factor depending on $x$ is taken outside the
integral and the factor depending on $y$ is multiplied into $b_Q$.
The remaining kernel is $\Omega_s(x-y)k_j(x-y)$ and therefore has a
convolution structure. The pure phases $A_h,C_h$ are kept as
unimodular multipliers, so their coefficients do not enter
\eqref{eq:fourier-weight-general}.

The Fourier expansion is valid for every $\mathfrak M_h$.
However, its coefficient bound is not uniform in $\mathfrak M_h$.
The normalization \eqref{eq:normal} controls only the highest mixed
bidegree; it does not bound all the lower mixed coefficients or the
coefficients obtained by translation to $x_h$.
Consequently, applying \eqref{eq:fourier-weight-general} on every
localized region would leave a factor depending on the individual
coefficients of $P$, which is not allowed in Theorem
\ref{prop:lowfull}.

We compare this factor with the decay supplied by Seeger's estimates (\cite{Seeger1996}).
With the parameters used in Section \ref{sec:microlocal}, those
estimates give a factor $2^{-s/4}$ in the $L^2$ norm of the main term
and in the $L^1$ norm of the remainder, apart from the kernel size
and the mass factors; see \eqref{eq:mode-G-seeger-root} and
\eqref{eq:mode-rem-seeger}.
After Fourier summation, the corresponding factor is bounded by
\[
 C2^{-s/4}(1+\mathfrak M_h)^{N_0}.
\]
To retain decay in $s$, it is therefore sufficient to assume
$\mathfrak M_h\le2^{\sigma s}$, where $\sigma$ is fixed by
\eqref{eq:sigma}. Under this assumption,
\begin{equation}\label{eq:fourier-weight}
 \sum_{\gamma}|c_{\gamma}^h|(1+|\gamma_2|+\mathfrak M_h)
 \le C(1+\mathfrak M_h)^{N_0}\le C2^{N_0\sigma s},
\end{equation}
and
\[
 2^{-s/4}(1+\mathfrak M_h)^{N_0}
 \le C2^{-(1/4-N_0\sigma)s},
 \qquad \frac14-N_0\sigma>0.
\]
The additional factor $2^{\eta_0s}$ from the truncated kernel is
absorbed by the subsequent choice of $\eta_0$ in \eqref{eq:eta}.

This leads to the division according to the size of the local mixed
coefficients. When $\mathfrak M_h\le2^{\sigma s}$, we use the Fourier
expansion, correct the means of the bad functions, and apply Seeger's
convolution estimates. When $\mathfrak M_h>2^{\sigma s}$, the preceding
Fourier coefficient bound does not give the required uniform decay.
In this case, we instead use the size of the mixed coefficients in
the equal-scale two-center estimate \eqref{eq:two-equal}.
We first prove the estimate for these large local mixed phases and
then return to the mean correction for small mixed phases.

\subsection{Large local mixed phases}\label{subsec:large-local}
First, we concentrate on the case $\mathfrak M_h>2^{\sigma s}$.
The condition $\mathfrak M_h>2^{\sigma s}$ gives a lower bound for the
mixed coefficient norm. It does not imply a lower bound for every
phase difference $\Pi_h(x,y)-\Pi_h(x,z)$.
As in Section 4, we first estimate the
near-diagonal region and the region where the phase difference has
small nonconstant coefficients. On the complement, Lemma
\ref{lem:twocenter} gives oscillatory decay.
The following lemma controls the entire localized term in $L^2$
without using mean-zero cancellation.

 \begin{lemma}[Large local mixed phases]\label{lem:large}
Let $x_0\in\R^n$, $R>0$, and $s\ge s_*$. Suppose that the real polynomial
\[
 \Pi(x,y)=\sum_{|\alpha|,|\beta|\ge1}p_{\alpha\beta}
 \left(\frac{x-x_0}{R}\right)^\alpha
 \left(\frac{y-x_0}{R}\right)^\beta
\]
has degree at most $D$ and $\mathfrak M=\sum_{\alpha,\beta}|p_{\alpha\beta}|>2^{\sigma s}$.
Let $\chi^x,\chi^y\in C_c^\infty$ take values in $[0,1]$, be supported in $x_0+RU_0$, where $U_0$ is a fixed box, and satisfy
$$\norm{\partial^\alpha\chi^x}_\infty+\norm{\partial^\alpha\chi^y}_\infty\le CR^{-|\alpha|},\,\,|\alpha|\le2.$$
For arbitrary real-valued measurable functions $a_0,c_0$, define
\begin{equation}\label{eq:large-model}
 \mathcal T_Rb(x)=\chi^x(x)\int
 e^{i[a_0(x)+c_0(y)+\Pi(x,y)]}\Omega_s(x-y)
 \frac{\phi(R^{-1}|x-y|)}{|x-y|^n}\chi^y(y)b(y)\,dy.
\end{equation}
If $b=\sum_{Q\in\mathcal Q}b_Q$, where the cubes $Q$ are pairwise disjoint, have sidelength $R2^{-s}$, and are contained in $x_0+RU_0$, and
$\supp b_Q\subset Q$, $\norm{b_Q}_1\le C\tau|Q|$, then
\begin{equation}\label{eq:large-bound}
 \norm{\mathcal T_Rb}_2^2
 \le C\tau\norm\Omega_1^2\,2^{-(\delta_0-2\eta_0)s}
 \sum_{Q\in\mathcal Q}\norm{b_Q}_1.
\end{equation}
One may take $\delta_0=c\sigma/([4(N+c)]D)$, where $c=1/(258D)$ and $N=16$.
\end{lemma}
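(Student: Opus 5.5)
Expanding the square of the $L^2$ norm, as in \eqref{eq:TT-full} but now at a single scale, we get
\[
\norm{\mathcal T_Rb}_2^2=\iint G(y,z)\,b(y)\overline{b(z)}\,dy\,dz,
\]
with correlation kernel
\[
G(y,z)=\int|\chi^x(x)|^2e^{i[\Pi(x,y)-\Pi(x,z)]}\Omega_s(x-y)\overline{\Omega_s(x-z)}k(x-y)\overline{k(x-z)}\,dx\cdot e^{i[c_0(y)-c_0(z)]}\chi^y(y)\chi^y(z),
\]
where $k(w)=\phi(R^{-1}|w|)|w|^{-n}$. The factor $e^{ia_0(x)}$ cancels in $|\cdot|^2$, and $c_0$ only contributes a unimodular factor, so $|G|$ is independent of $a_0,c_0$. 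The trivial bound is $|G(y,z)|\le C2^{2\eta_0s}\norm\Omega_1^2R^{-n}$, supported in $|y-z|\le CR$. Hence it suffices to show, for each fixed $z\in x_0+RU_0$,
\[
\int|G(y,z)||b(y)|\,dy\le C\tau\norm\Omega_1^2 2^{2\eta_0s}2^{-\delta_0s}.
\]
Integrating against $|b(z)|$ then gives \eqref{eq:large-bound}. After rescaling $x,y,z$ by $x_0+R\,\cdot$, everything lives on a fixed box $U_0$ with cubes of side $2^{-s}$. The $L^1$ mass of $b$ on any union of cubes is at most $C\tau$ times its measure, by the CZ bounds.

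Fix $z$ and write $Q_{y,z}(x)=\Pi(x,y)-\Pi(x,z)$. Set $A(y)=\norm{Q_{y,z}(z+R\cdot)}_\nc$, the quantity appearing in the equal-scale estimate \eqref{eq:two-equal}, and pick $\beta=\sigma/(4(N+c))$. Split $y$ into three regions. The near region $\mathcal N=\{|y-z|\le R2^{-\beta s}\}$ has bad-cube mass at most $C\tau R^n(2^{-\beta s}+2^{-s})^n$, exactly as in \eqref{eq:high-near}. The degenerate region is $\mathcal Z=\{A(y)\le 2^{\sigma s/2}\}$. On the good complement, \eqref{eq:two-equal} applies (with $d\le CR$, the amplitude checked as in Section~\ref{sec:high}, and $a$ built from $|\chi^x|^2$ and the annular factors). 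It gives
\[
|G|\le C2^{2\eta_0s}\norm\Omega_1^2R^{-n}2^{N\beta s}2^{-c\sigma s/2},
\]
and the choice of $\beta$ makes the exponent $\le -c\sigma s/4$. Multiplying by the total mass $C\tau R^n$ gives the required decay.

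The main obstacle is $\mathcal Z$: I must show that the set of $y$ with small $A(y)$, thickened by cubes of side $2^{-s}$, has small measure. The plan is to view $A(y)$ through a single coefficient. In rescaled variables $u=(y-x_0)/R$,
\[
Q_{y,z}(z+Rw)-Q_{y,z}(z)=\sum_{\alpha,\beta}p_{\alpha\beta}\bigl[(w_z+w)^\alpha-w_z^\alpha\bigr]\bigl(u^\beta-u_z^\beta\bigr),
\]
where $w_z=(z-x_0)/R$. By \eqref{eq:translation}, the $x$-coefficients of this expression are comparable, up to an invertible bounded linear map, to the polynomials $F_\alpha(u)=\sum_\beta p_{\alpha\beta}(u^\beta-u_z^\beta)$. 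Hence $A(y)\ge c\max_\alpha|F_\alpha(u)|$. Choose $\alpha_*$ with $\norm{F_{\alpha_*}}_\nc\ge c\mathfrak M$, which is possible since $|\beta|\ge1$ and $F_\alpha-F_\alpha(0)$ carries the coefficients $p_{\alpha\beta}$. Then
\[
\mathcal Z\subset\{u\in U_0^*:|F_{\alpha_*}(u)|\le C2^{\sigma s/2}\}\subset\{|F_{\alpha_*}|\le C2^{-\sigma s/2}\norm{F_{\alpha_*}}_\coef\}.
\]
The second inclusion uses $\mathfrak M>2^{\sigma s}$ together with $\norm{F_{\alpha_*}}_\coef\ge\norm{F_{\alpha_*}}_\nc\ge c\mathfrak M$. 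Now apply \eqref{eq:thick} of Lemma~\ref{lem:sublevel} with $t=C2^{-\sigma s/2}$, $\ell=2^{-s}$ and $B_0=\norm{F_{\alpha_*}}_\coef$. This bounds the union of bad cubes meeting $\mathcal Z$ by $C(2^{-\sigma s/2}+2^{-s})^{1/D}$ in rescaled measure, so its mass is at most $C\tau R^n2^{-\sigma s/(2D)}$. The same lemma handles $\mathcal N$ trivially.

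Collecting the three regions gives the decay exponent $\min\{n\beta,\sigma/(2D),c\sigma/4\}$. This is at least $c\sigma/(4(N+c)D)=\delta_0$, which proves the lemma.
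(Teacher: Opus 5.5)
The approach is the paper's, but the good-region step fails as written because $\beta$ is chosen too large.

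Your argument follows the paper's proof of this lemma closely. You expand $\norm{\mathcal T_Rb}_2^2$ into a correlation kernel in which the pure phases drop out. You then split $y$ into a near-diagonal set, a degenerate set, and the rest. The degenerate set is controlled through one dominant coefficient polynomial $F_{\alpha_*}$, which is exactly the paper's $q_{\alpha_*}(\cdot)-q_{\alpha_*}(u_z)$, together with \eqref{eq:thick}. On the rest you apply the equal-scale estimate \eqref{eq:two-equal}. The only cosmetic difference is in how the degenerate set is cut. You use the absolute threshold $A(y)\le 2^{\sigma s/2}$ and then convert it to a relative sublevel set via $\mathfrak M>2^{\sigma s}$. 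The paper instead cuts by $|q(u)-q(u_z)|\le\mathfrak M2^{-\beta_Ls}$ and uses the full size of $\mathfrak M$ on the complement. Both work, and your reduction $A(y)\ge c\max_\alpha|F_\alpha(u)|$ via \eqref{eq:translation} is correct.

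The failing step is your claim that $\beta=\sigma/(4(N+c))$ makes the good-region exponent at most $-c\sigma s/4$; that choice is missing a factor of $c$. On the complement you have $(1+R/d)^N\le C2^{N\beta s}$ and $A^{-c}<2^{-c\sigma s/2}$. The resulting exponent is
\[
\Bigl(N\beta-\tfrac{c\sigma}{2}\Bigr)s=\sigma s\Bigl(\tfrac{N}{4(N+c)}-\tfrac c2\Bigr),
\]
which is positive because $N=16$ and $c=1/(258D)$ is tiny. So the near-diagonal loss, roughly $2^{\sigma s/4}$, swamps the oscillatory gain $2^{-c\sigma s/2}$, and the kernel bound grows in $s$.

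You need $N\beta<c\sigma/2$. Taking $\beta=c\sigma/(4(N+c))$, as the paper does with $\beta_L$, gives $N\beta-c\sigma/2<-c\sigma/4$. Then $n\beta=nc\sigma/(4(N+c))\ge\delta_0$, so your final bound $\min\{n\beta,\sigma/(2D),c\sigma/4\}\ge\delta_0$ goes through unchanged.
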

\begin{proof}
\noindent{ \emph{The correlation kernel}.}
To use the two-center estimate, we expand the square of the $L^2$ norm.
This removes the pure $x$-phase and turns the mixed phase into a
polynomial difference in $y,z$.
Set $Q_{y,z}(x)=\Pi(x,y)-\Pi(x,z)$ and
\[
 K(y,z)=\chi^y(y)\overline{\chi^y(z)}
 \int e^{iQ_{y,z}(x)}|\chi^x(x)|^2\Omega_s(x-y)\overline{\Omega_s(x-z)}
 \frac{\phi(R^{-1}|x-y|)\overline{\phi(R^{-1}|x-z|)}}{|x-y|^n|x-z|^n}\,dx.
\]
When the square is expanded, the pure $x$-phase cancels. Hence
\begin{equation}\label{eq:large-correlation}
 \norm{\mathcal T_Rb}_2^2
 =\iint e^{i[c_0(y)-c_0(z)]}K(y,z)b(y)\overline{b(z)}\,dy\,dz.
\end{equation}
By the annular support and \eqref{eq:Omega-s-infty},
$|K(y,z)|\le C2^{2\eta_0s}\norm\Omega_1^2R^{-n}$.
After removing the rough kernel factors and multiplying the amplitude by $R^{2n}$, its derivatives up to order two in the section coordinates are uniformly bounded. We may therefore apply the equal-scale estimate in Lemma \ref{lem:twocenter}.

\medskip
\noindent{ \emph{The two exceptional regions}.}
The equal-scale estimate deteriorates when $y$ and $z$ are too close or
when the nonconstant coefficients of $Q_{y,z}$ are too small.
We isolate these two possibilities. Since the coefficients of $\Pi$ are
large, at least one of its coefficient polynomials in $y$ is large;
a sublevel set estimate then controls where its difference is small.
Write
\[
 \Pi(x,y)=\sum_{\alpha\ne0}
 \left(\frac{x-x_0}{R}\right)^\alpha q_\alpha\!\left(\frac{y-x_0}{R}\right),
 \qquad q_\alpha(0)=0,
\]
and choose $q=q_{\alpha_*}$ so that $\norm q_\nc=\norm q_\coef\ge c\mathfrak M$.
Fix $z\in x_0+RU_0$ and set
\begin{align*}
 \mathcal N_z&=\{y\in x_0+RU_0:|y-z|\le R2^{-\beta_Ls}\},\\
 \mathcal Z_z&=\left\{y\in x_0+RU_0:
 \left|q\!\left(\frac{y-x_0}{R}\right)-q\!\left(\frac{z-x_0}{R}\right)\right|
 \le\mathfrak M2^{-\beta_Ls}\right\},
\end{align*}
Here $\beta_L>0$ will be chosen below to balance the small-set bounds
against the loss in the oscillatory estimate.

Each bad cube has sidelength $R\cdot2^{-s}$. The bound
$\norm{b_Q}_1\le C\tau|Q|$ controls the mass on an entire cube, so we
include every cube intersecting either exceptional region in its entirety.
Applying Lemma \ref{lem:sublevel} in the variable $(y-x_0)/R$, we obtain
\begin{align}\label{eq:large-smallsets}
 \int_{\mathcal N_z}|b(y)|\,dy
 &\le C\tau R^n(2^{-\beta_Ls}+2^{-s})^n,\notag\\
 \int_{\mathcal Z_z}|b(y)|\,dy
 &\le C\tau R^n(2^{-\beta_Ls}+2^{-s})^{1/D}.
\end{align}
Here we have used $\norm{q-q((z-x_0)/R)}_\coef\ge\norm q_\nc\ge c\mathfrak M$.

\medskip
\noindent{\emph{Oscillation on the complement and the final bound}.}

Outside $\mathcal N_z\cup\mathcal Z_z$, we have
$d=|y-z|>R2^{-\beta_Ls}$. 
we obtain
\[
\begin{aligned}
	\norm{Q_{y,z}(x_0+R\cdot)}_\nc
	&=\sum_{\alpha\ne0}
	|q_\alpha(\frac{y-x_0}{R})-q_\alpha(\frac{z-x_0}{R})|\\
	&\ge |q(\frac{y-x_0}{R})-q(\frac{z-x_0}{R})|\\
	&>\mathfrak M2^{-\beta_Ls}.
\end{aligned}
\]
Since $|\frac{z-x_0}{R}|\le C$, the translation estimate
\eqref{eq:translation} gives
\[
\norm{Q_{y,z}(z+R\cdot)}_\nc
\ge c\norm{Q_{y,z}(x_0+R\cdot)}_\nc
\ge c\mathfrak M2^{-\beta_Ls}.
\]

Lemma \ref{lem:twocenter} therefore gives
\begin{equation}\label{eq:large-good}
 |K(y,z)|\le C2^{2\eta_0s}\norm\Omega_1^2R^{-n}
 2^{-[c\sigma-(N+c)\beta_L]s}.
\end{equation}
Choose $$\beta_L=c\sigma/[4(N+c)]\,\,\, \hbox{and}\,\,\,\delta_0=\beta_L/D.$$ Then
$c\sigma-(N+c)\beta_L=3c\sigma/4>\delta_0$. Combining the preceding small-set estimates, the direct kernel bound, and $\int|b|\le C\tau R^n$, we obtain
\[
 \int|K(y,z)||b(y)|\,dy
 \le C\tau\norm\Omega_1^2\,2^{-(\delta_0-2\eta_0)s}.
\]
Substituting this into \eqref{eq:large-correlation} and using $\norm b_1\le\sum_{Q\in\mathcal Q}\norm{b_Q}_1$ proves \eqref{eq:large-bound}. The proof of Lemma \ref{lem:large} is finished.
\end{proof}

When $\mathfrak M_h>2^{\sigma s}$, take $x_0=x_h$, $\Pi=\Pi_h$,
$a_0(x)=P(x_h,x_h)+A_h(x)$, $c_0(y)=C_h(y)$, and $\mathcal Q=\mathcal Q_h$. The preceding lemma gives
\begin{equation}\label{eq:large-application}
 \norm{T_{j,\Omega_s}^{(h)}b_h}_2^2
 \le C\tau\norm\Omega_1^2\,2^{-(\delta_0-2\eta_0)s}
 \sum_{Q\in\mathcal Q_h}\norm{b_Q}_1.
\end{equation}

\subsection{Mean correction for small mixed phases}\label{subsec:mean-correction}
We now assume that $\mathfrak M_h\le2^{\sigma s}$.
The Fourier expansion \eqref{eq:mixed-fourier} reduces the localized
operator to convolution terms, and \eqref{eq:fourier-weight} controls
their coefficient sum. To apply Seeger's remainder estimate, however,
the functions in these convolutions must have ordinary mean zero.
The phase-adapted cancellation \eqref{eq:cancel} does not immediately
give this property after multiplication by a Fourier mode.
We therefore estimate their means and subtract them on each bad cube.
For $Q\in\mathcal Q_h$ and $\gamma_2\in\Z^n$, define
\begin{equation}\label{eq:mode-input}
 f_{Q,\gamma_2}(y)=e^{iC_h(y)}e^{i\pi\gamma_2\cdot(y-x_h)/(L_0R)}b_Q(y),
 \qquad m_{Q,\gamma_2}=\int_Qf_{Q,\gamma_2}(y)\,dy.
\end{equation}
The cancellation available from Section \ref{sec:main-proof} involves
$P(c_Q,y)$. Using \eqref{eq:phase-split} to factor out its constant part,
\eqref{eq:cancel} becomes
\[
 \int_Qe^{iC_h(y)}e^{i\Pi_h(c_Q,y)}b_Q(y)\,dy=0.
\]
We compare the Fourier factor with this cancelling phase, choosing the
constant multiplier so that the two agree at $y=c_Q$.
Taking $\zeta=\exp\{i\pi\gamma_2\cdot(c_Q-x_h)/(L_0R)-i\Pi_h(c_Q,c_Q)\}$, we obtain
\[
 m_{Q,\gamma_2}=\int_Qe^{iC_h(y)}
 \left[e^{i\pi\gamma_2\cdot(y-x_h)/(L_0R)}
 -\zeta e^{i\Pi_h(c_Q,y)}\right]b_Q(y)\,dy.
\]
The difference in brackets vanishes at $y=c_Q$, and its gradient is bounded by
$CR^{-1}\cdot(|\gamma_2|+\mathfrak M_h)$. Since $|y-c_Q|\le CR\cdot 2^{-s}$, we obtain
\begin{equation}\label{eq:mode-smallmean}
 |m_{Q,\gamma_2}|\le C2^{-s}(|\gamma_2|+\mathfrak M_h)\norm{b_Q}_1.
\end{equation}
The factor $2^{-s}$ comes from $\ell(Q)/R$.
The pure phase $e^{iC_h(y)}$ remains outside the difference, so its
derivatives do not enter the bound. We also have
$|m_{Q,\gamma_2}|\le\norm{b_Q}_1$, which will keep the mass of the
corrected function uniformly bounded for all frequencies.
Subtract the average on the same cube and set
\begin{gather}\label{eq:mode-correct}
 \widetilde f_{Q,\gamma_2}=f_{Q,\gamma_2}-m_{Q,\gamma_2}|Q|^{-1}\one_Q,
 \qquad F_{\gamma_2}=\sum_{Q\in\mathcal Q_h}f_{Q,\gamma_2},\,
 \widetilde F_{\gamma_2}=\sum_{Q\in\mathcal Q_h}\widetilde f_{Q,\gamma_2},\end{gather}and \begin{gather}
 F_{\gamma_2}-\widetilde F_{\gamma_2}
 =\sum_{Q\in\mathcal Q_h}m_{Q,\gamma_2}|Q|^{-1}\one_Q.\notag
\end{gather}
Then
\begin{equation}\label{eq:mode-mass}
 \int_Q\widetilde f_{Q,\gamma_2}=0,\qquad
 \norm{\widetilde f_{Q,\gamma_2}}_1\le2\norm{b_Q}_1\le C\tau|Q|.
\end{equation}
Thus the corrected functions have ordinary mean zero and satisfy the
same support and mass conditions up to a fixed constant.
It remains to estimate the contribution of the subtracted averages.

\subsection{Radial estimates for the convolution kernels and the mean-correction error}\label{subsec:radial}
We now have a convolution representation with corrected bad functions.
Two facts are needed: the kernel must satisfy the radial derivative
bounds in Seeger's lemma (\cite[Lemma 2.1-2.2]{Seeger1996}), and the discarded averages must produce a
summable $L^1$ error. For fixed $s$, define the convolution kernel
without the phase factor at each annular scale by
\begin{equation}\label{eq:new-reg}
 \HH_{j,s}(x)=\Omega_s(x)k_j(x)
 =\Omega_s(x)\frac{\phi(2^{-j}|x|)}{|x|^n}.
\end{equation}
It satisfies
\begin{equation}\label{eq:reg-bounds}
 \supp\HH_{j,s}\subset\{2^{j-1}\le|x|\le2^{j+1}\},
 \qquad\norm{\HH_{j,s}}_1\le C 2^{\eta_0s}\norm\Omega_1.
\end{equation}
For fixed $\theta$ and any nonnegative integer $a$, the product rule gives
\[
 \partial_r^a\HH_{j,s}(r\theta)
 =\Omega_s(\theta)\sum_{b=0}^a\binom ab
 2^{-jb}\phi^{(b)}(2^{-j}r)
 \frac{d^{a-b}}{dr^{a-b}}r^{-n}.
\]
Every term on the right-hand side is supported where $r\simeq2^j$. Hence
\[
 r^{n+a}|\partial_r^a\HH_{j,s}(r\theta)|
 \le C|\Omega_s(\theta)|.
\]
The remainder estimate in Section \ref{sec:microlocal} uses finitely many
radial derivatives. We choose enough derivatives to make its second
error term decay, namely
\begin{equation}\label{eq:NS}
 N_S:=5(n+1).
\end{equation}
Then
\begin{equation}\label{eq:reg-radial}
 \sup_{0\le a\le N_S}\sup_j
 \operatorname*{ess\,sup}_{\theta\in\Sph^{n-1}}\sup_{r>0}
 r^{n+a}|\partial_r^a\HH_{j,s}(r\theta)|\le C2^{\eta_0s}\norm\Omega_1.
\end{equation}
The constant is uniform in $j$. Radial differentiation leaves $\theta$
fixed and does not differentiate $\Omega_s(\theta)$. Separating the phase
before applying this estimate is essential: these kernel bounds contain
no phase coefficients. The calculation explaining the choice of $N_S$
is given in \eqref{eq:seeger-exp-calc}.

By \eqref{eq:mode-correct}, each Fourier mode admits the exact decomposition
\[
 \HH_{j,s}*F_{\gamma_2}
 =\HH_{j,s}*\widetilde F_{\gamma_2}
 +\HH_{j,s}*(F_{\gamma_2}-\widetilde F_{\gamma_2}).
\]
In a region with $\mathfrak M_h\le2^{\sigma s}$, retain the corrected
convolution terms in $U_h$ and collect the subtracted averages in
$R_h^{\mathrm{err}}$. Thus define
\begin{align}\label{eq:Uh}
 U_h(x)&=\chi_h^{x}(x)
 e^{iP(x_h,x_h)+iA_h(x)}
 \sum_{\gamma}c_{\gamma}^he^{i\pi\gamma_1\cdot(x-x_h)/(L_0R)}
 (\HH_{j,s}*\widetilde F_{\gamma_2})(x),\end{align} and \begin{align}
 R_h^{\mathrm{err}}(x)&=\chi_h^{x}(x)
 e^{iP(x_h,x_h)+iA_h(x)}
 \sum_{\gamma}c_{\gamma}^he^{i\pi\gamma_1\cdot(x-x_h)/(L_0R)}\notag\\
 &\quad\times\bigl[\HH_{j,s}*(F_{\gamma_2}-\widetilde F_{\gamma_2})\bigr](x).\label{eq:Rh}
\end{align}
By \eqref{eq:mixed-fourier} and the preceding identity, a term-by-term calculation gives
\begin{equation}\label{eq:local-reduce}
 T_{j,\Omega_s}^{(h)}b_h=U_h+R_h^{\mathrm{err}}.
\end{equation}

\medskip
\noindent\textbf{The mean-correction error.}
We estimate the subtracted averages by Young's inequality.
The factor $2^{-s}$ in \eqref{eq:mode-smallmean} gives decay after
summation with the weighted Fourier coefficient bound
\eqref{eq:fourier-weight}. Indeed,
\[
 \norm{F_{\gamma_2}-\widetilde F_{\gamma_2}}_1
 \le\sum_{Q\in\mathcal Q_h}|m_{Q,\gamma_2}|
 \le C2^{-s}(|\gamma_2|+\mathfrak M_h)\sum_{Q\in\mathcal Q_h}\norm{b_Q}_1.
\]
Consequently, Young's inequality, \eqref{eq:reg-bounds}, and \eqref{eq:fourier-weight} give
\begin{align}\label{eq:reg-error}
 \norm{R_h^{\mathrm{err}}}_1
 &\le C2^{\eta_0s}\norm\Omega_1\,2^{-s}\sum_{Q\in\mathcal Q_h}\norm{b_Q}_1
 \sum_{\gamma}|c_{\gamma}^h|(|\gamma_2|+\mathfrak M_h)\notag\\
 &\le C2^{\eta_0s}\norm\Omega_1\,2^{-(1-N_0\sigma)s}\sum_{Q\in\mathcal Q_h}\norm{b_Q}_1
 \notag\\
 &=C\norm\Omega_1\,2^{-(\kappa_2-\eta_0)s}\sum_{Q\in\mathcal Q_h}\norm{b_Q}_1,
 \qquad\kappa_2=1-N_0\sigma>0.
\end{align}
By \eqref{eq:fourier-weight} and Young's inequality, the preceding Fourier series converge absolutely in $L^1$. In the large-phase region, define
\begin{equation}\label{eq:large-UR-definition}
 U_h=T_{j,\Omega_s}^{(h)}b_h,\qquad R_h^{\mathrm{err}}=0
 \qquad(\mathfrak M_h>2^{\sigma s}).
\end{equation}

At this point the mean-correction error is controlled. Define the global
corrected term and its error by
\begin{equation}\label{eq:globalUR}
 \mathcal U_{j,s}=\sum_h\one_{I_h}U_h,\qquad
 \mathcal R_{j,s}=\sum_h\one_{I_h}R_h^{\mathrm{err}}.
\end{equation}
The exact identity is
\begin{align} \label{ch1} T_{j,\Omega_s}B_{j-s}=\mathcal U_{j,s}+\mathcal R_{j,s}.\end{align}
To finish Theorem \ref{prop:lowfull}, we still need to split
$\mathcal U_{j,s}$ into the required $L^2$ main term and $L^1$ remainder.

\subsection{Local main terms, remainders, and summation}\label{subsec:GE}
In the small-phase region, the remaining term $U_h$ is a sum of
convolutions with mean-zero bad functions. We apply Seeger's kernel
decomposition(\cite{Seeger1996}) to each Fourier mode: the main term satisfies an $L^2$
decay estimate, while the remainder satisfies an $L^1$ decay estimate
using mean-zero cancellation. For $H_j=\HH_{j,s}$, denote the main kernel
defined in \eqref{eq:seeger-Gamma} by $\Gamma_{j,\Omega_s}^s$, with
$\kappa=1/2$. The construction and
the detailed estimates will be given in Section \ref{sec:microlocal}. In the small-phase region, define
\begin{align}\label{eq:Gh}
 G_h(x)&=\chi_h^{x}(x)
 e^{iP(x_h,x_h)+iA_h(x)}
 \sum_{\gamma}c_{\gamma}^he^{i\pi\gamma_1\cdot(x-x_h)/(L_0R)}
 (\Gamma_{j,\Omega_s}^s*\widetilde F_{\gamma_2})(x)\end{align} and \begin{align}
 E_h(x)&=\chi_h^{x}(x)
 e^{iP(x_h,x_h)+iA_h(x)}
 \sum_{\gamma}c_{\gamma}^he^{i\pi\gamma_1\cdot(x-x_h)/(L_0R)}\notag\\
 &\quad\times\bigl[\HH_{j,s}*\widetilde F_{\gamma_2}-\Gamma_{j,\Omega_s}^s*\widetilde F_{\gamma_2}\bigr](x).\label{eq:Eh}
\end{align}
These definitions give $U_h=G_h+E_h$ exactly. In the large-phase
region, \eqref{eq:large-application} already controls $U_h$ in $L^2$,
so set $G_h=U_h$ and $E_h=0$. In both cases, let
\begin{equation}\label{eq:globalGE}
 G_{j,s}=\sum_h\one_{I_h}G_h,\qquad E_{j,s}=\sum_h\one_{I_h}E_h.
\end{equation}
By definition,
\begin{equation}\label{eq:U-GE}
 \mathcal U_{j,s}=G_{j,s}+E_{j,s}.
\end{equation} Thus by \eqref{ch1}, we get \begin{equation}\label{ch2}
 T_{j,\Omega_s}B_{j-s}=G_{j,s}+E_{j,s}+\mathcal R_{j,s}.
\end{equation}
Thus $\mathcal R_{j,s}$ is the mean-correction error, and $E_{j,s}$ is
the remainder from Seeger's kernel decomposition.
The main term $G_{j,s}$ contains the convolutions with
$\Gamma_{j,\Omega_s}^s$ in the small-phase region and the entire
localized contribution in the large-phase region.

\begin{proof}[Proof of Theorem \ref{prop:lowfull}] To prove Theorem \ref{prop:lowfull}, we verify the decomposition and estimates in \eqref{eq:prop-reduction}--\eqref{eq:prop-reduction-error}.

\noindent{ \emph{The exact decomposition.}}
For every $h$, the preceding construction gives
\[
 T_{j,\Omega_s}^{(h)}b_h
 =U_h+R_h^{\mathrm{err}}
 =G_h+E_h+R_h^{\mathrm{err}}.
\]
Restricting to $I_h$, using \eqref{eq:parent-identity}, and summing over
the pairwise disjoint cubes $I_h$ proves \eqref{eq:prop-reduction}.

\medskip
\noindent{ \emph{The mean-correction error}.}
For large mixed coefficients this error is zero. For small mixed
coefficients use \eqref{eq:reg-error}. The bounded overlap in
\eqref{eq:parent-overlap} gives
\begin{align*}
 \norm{\mathcal R_{j,s}}_1
 &\le\sum_h\norm{\one_{I_h}R_h^{\mathrm{err}}}_1\\
 &\le C\norm\Omega_1\,2^{-(1-N_0\sigma-\eta_0)s}
       \sum_h\sum_{Q\in\mathcal Q_h}\norm{b_Q}_1\\
 &\le C\norm\Omega_1\,2^{-(\kappa_2-\eta_0)s}
       \norm{B_{j-s}}_1,
 \qquad \kappa_2=1-N_0\sigma>0.
\end{align*}
This proves \eqref{eq:prop-reduction-error}.

\medskip
\noindent{\emph{The main term and the projection remainder.}}
In the small-phase region, we claim that \begin{equation*}
 \norm{G_h}_2^2
 \le C\tau 2^{2\eta_0s}\norm\Omega_1^2
 2^{-(1/2-2N_0\sigma)s}\sum_{Q\in\mathcal Q_h}\norm{b_Q}_1.
\end{equation*} and \begin{align*}
 \norm{E_h}_1
 &\le  C2^{\eta_0s}\norm\Omega_1
 2^{-(1/4-N_0\sigma)s}\sum_{Q\in\mathcal Q_h}\norm{b_Q}_1,
 \end{align*} whose proofs will be given in Section \ref{sec:microlocal}, following directly from Seeger’s lemma\cite[Lemmas 2.1--2.2]{Seeger1996} and the weighted Fourier bound \eqref{eq:fourier-weight}, via \eqref{eq:Gh-seeger-square} and \eqref{eq:Eh-seeger}.
In the large-phase region, use \eqref{eq:large-application}
and $E_h=0$. Thus the exponents can be chosen as
\[
 \kappa_3=\min\left\{\delta_0,\frac12-2N_0\sigma\right\},
 \qquad \kappa_4=\frac14-N_0\sigma.
\]
The disjointness of the cubes $I_h$ permits summation of the squared
$L^2$ norms of the restricted main terms; the $L^1$ remainders are
summed by the triangle inequality. We claim that \begin{align*}
 \norm{G_{j,s}}_2^2
 &\le C\tau 2^{2\eta_0s}\norm\Omega_1^2\,2^{-\kappa_3s}
\norm{B_{j-s}}_1.
\end{align*} and \begin{align*}
 \norm{E_{j,s}}_1
&\le C2^{\eta_0s}\norm\Omega_1\,2^{-\kappa_4s}\norm{B_{j-s}}_1,
 \end{align*} whose proofs are given in detail in Section \ref{sec:microlocal} via \eqref{eq:prop-main-seeger} and \eqref{eq:prop-rem-seeger}.
They are exactly \eqref{eq:prop-main} and \eqref{eq:prop-rem}.
All three decay exponents are positive by \eqref{eq:sigma} and
Lemma \ref{lem:large}; the subsequent choice \eqref{eq:eta} preserves
decay after the factors from $\Omega_s$ are included. This completes
the proof of Theorem 3.2.
\end{proof}

\section{Seeger's microlocal estimates for the low positive scales}\label{sec:microlocal}
Section \ref{sec:reduction} has constructed the exact decomposition in
Theorem \ref{prop:lowfull} and estimated the mean-correction error
$\mathcal R_{j,s}$. The large-phase region has also been treated
by Lemma \ref{lem:large}. The remaining estimates used in that proof are
the $L^2$ bound for $G_{j,s}$ and the $L^1$ bound for $E_{j,s}$ in the
small-phase region. We establish them here from Seeger's lemma.

For each fixed $j,h$, the phase has already been separated into
unimodular factors and Fourier coefficients, and the functions
$\widetilde f_{Q,\gamma_2}$ satisfy ordinary mean-zero cancellation.
Thus the problem is now to estimate convolutions with
$\HH_{j,s}$. We first describe the directional projection of this
kernel, then check the hypotheses of Seeger's lemma, and finally sum
over the Fourier frequencies and the local regions.

\subsection{The kernel decomposition and the required decay}
The decomposition separates a kernel with an $L^2$ estimate based on
cube mass bounds from a remainder with an $L^1$ estimate based on
mean-zero cancellation. This is why the two parts of $U_h$ were assigned
different norms in Section \ref{subsec:GE}.
Fix $s>3$. Choose a finite set of directions on $\Sph^{n-1}$,
\(
\{e_v^s\}_v\subset\Sph^{n-1},
\)
such that
\(
|e_v^s-e_{v'}^s|>2^{-s-10}n^{-1},
v\ne v',
\)
and, for every $\theta\in\Sph^{n-1}$, there exists $v$ such that
\(
|\theta-e_v^s|\le 2^{-s-1}.
\)
Choose a corresponding pairwise disjoint measurable partition $\{E_v^s\}_v$ satisfying
\begin{equation}\label{eq:seeger-caps}
 \Sph^{n-1}=\bigcup_vE_v^s,\qquad
 E_v^s\subset\{\theta:|\theta-e_v^s|\le C2^{-s}\},\qquad
 \sigma(E_v^s)\le C2^{-(n-1)s}.
\end{equation}
The width $2^{-s}$ reflects the ratio of the bad-cube sidelength to the
annular scale. We first state the kernel construction for a general
family $H_j$ and then substitute $H_j=\HH_{j,s}$.
Let $H_j$ be a measurable function supported in the annulus
\begin{equation}\label{eq:seeger-support}
 \supp H_j\subset\{2^{j-2}\le |x|\le2^{j+2}\}
\end{equation}
where the constants are fixed, and, for each integer $N\ge0$, define the radial regularity constant
\begin{equation}\label{eq:seeger-MN}
 M_N(H):=
 \sup_{0\le a\le N}\sup_j
 \operatorname*{ess\,sup}_{\theta\in\Sph^{n-1}}
 \sup_{r>0}
 r^{n+a}\left|\left(\frac{\partial}{\partial r}\right)^aH_j(r\theta)\right|.
\end{equation}
For each $v$, set
\begin{equation}\label{eq:seeger-Hjv}
 H_{j,v}^s(x)=H_j(x)\one_{E_v^s}\!\left(\frac{x}{|x|}\right).
\end{equation}
Choose $\psi\in C_c^\infty(\R)$ supported in $[-4,4]$
with $\psi=1$ on $[-2,2]$.
Fix $0<\kappa<1$ and define the directional projection by its Fourier
multiplier
\begin{equation}\label{eq:seeger-projection}
	\widehat{P_v^s}(\xi)
	=\psi\!\left(2^{s(1-\kappa)}
	\frac{\langle\xi,e_v^s\rangle}{|\xi|}\right),
	\qquad \xi\ne0.
\end{equation}
The multiplier selects frequencies nearly perpendicular to $e_v^s$.
These frequencies are retained in the main kernel; the complementary
part is estimated using the radial regularity of $H_j$ and the
cancellation of the bad function. Define
\begin{equation}\label{eq:seeger-Gamma}
	\Gamma_j^s
	=\sum_v P_v^s*H_{j,v}^s.
\end{equation}
This gives the decomposition of $H_j$
\begin{equation}\label{eq:seeger-decomp}
	H_j=\Gamma_j^s+(H_j-\Gamma_j^s).
\end{equation}

\begin{lemma}[Seeger~{\cite[Lemmas 2.1--2.2]{Seeger1996}}]\label{lem:seeger}
With the notation of \eqref{eq:seeger-support}--\eqref{eq:seeger-decomp}, the following two estimates hold.

\medskip
\noindent
\textup{(i)} Let $\mathcal Q$ be a family of dyadic cubes with pairwise disjoint interiors, and set
\[
 \mathcal Q_m=\{Q\in\mathcal Q:\ell(Q)=2^m\},\qquad
 F_m=\sum_{Q\in\mathcal Q_m}f_Q.
\]
If
\begin{equation}\label{eq:seeger-input-main}
 \supp f_Q\subset Q,\qquad
 \norm{f_Q}_1\le\alpha|Q|,
\end{equation}
then, for $s>3$,
\begin{equation}\label{eq:seeger-L2}
 \norm{\sum_j\Gamma_j^s*F_{j-s}}_2^2
 \le CM_0(H)^2\,2^{-s(1-\kappa)}
 \alpha\sum_{Q\in\mathcal Q}\norm{f_Q}_1.
\end{equation}

\medskip
\noindent
\textup{(ii)} Let $Q$ have sidelength $2^{j-s}$, and let $b_Q$ be supported in $Q$ with
\begin{equation}\label{eq:seeger-input-rem}
 \int_Qb_Q=0.
\end{equation}
If $N\ge n+1$ and $0\le\varepsilon\le1$, then
\begin{equation}\label{eq:seeger-L1-general}
 \norm{(H_j-\Gamma_j^s)*b_Q}_1
 \le C
 \left[
 M_0(H)2^{-\varepsilon s}
 +M_N(H)2^{[\,n+(\varepsilon-\kappa)N\,]s}
 \right]\norm{b_Q}_1.
\end{equation}
\end{lemma}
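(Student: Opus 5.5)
The statement to be proved is Lemma~\ref{lem:seeger}, attributed to Seeger. I would prove both parts by following Seeger's argument: part (i) by an $L^2$ almost-orthogonality argument in the caps $E_v^s$, and part (ii) by integration by parts in the radial variable against the cancellation of $b_Q$.

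\emph{Part (i).} I would expand $\norm{\sum_j\Gamma_j^s*F_{j-s}}_2^2$ and bound it by $\sum_v$ of the squared norms of the $v$-pieces. This loses nothing essential. The multipliers $\widehat{P_v^s}$ have bounded overlap up to a factor of $2^{\kappa s}$, because the sets $\{|\langle\xi,e_v\rangle|\le 4\cdot 2^{-s(1-\kappa)}|\xi|\}$ are slabs of angular width $2^{-s(1-\kappa)}$, while the caps have width $2^{-s}$. So I would write
\[
\Big\|\sum_v P_v^s*g_v\Big\|_2^2\le C\,2^{\kappa s}\Big(\sup_\xi\sum_v|\widehat{P_v^s}(\xi)|^2\Big)\cdots
\]
More precisely, I would follow Seeger and use Cauchy--Schwarz in $v$ only over the pieces whose slabs contain a given $\xi$. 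There are about $2^{(n-2)s}\cdot 2^{\kappa s}\cdot 2^{s}$ such directions after normalization. Each $v$-piece $\sum_j H^s_{j,v}*F_{j-s}$ is then handled by the $TT^*$ kernel $\widetilde H_{j,v}*H_{k,v}$. This kernel is supported in a tube of width about $2^{-s}2^{\max(j,k)}$ around the line $\R e_v$, with $L^\infty$ size at most $M_0^2\sigma(E_v)2^{-n\max}$ times the tube-length factor. Testing it against $F$ and using that cubes of side $2^{k-s}$ meeting a tube of that width have total mass at most $\alpha$ times the tube volume, I would get, for each $v$, the bound $M_0^2\alpha\,2^{-(n-1)s}\cdot 2^{s}\cdots\norm{F}_1$. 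Summing the directional overlap should give the factor $2^{-s(1-\kappa)}$. The obstacle in this step is bookkeeping the powers of $2^s$ consistently: cap measure $2^{-(n-1)s}$, number of caps $2^{(n-1)s}$, and slab overlap $2^{\kappa s}\cdot 2^{(n-2)s}$. I would try to arrange the computation exactly as in \cite[Lemma 2.1]{Seeger1996}.

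\emph{Part (ii).} I would write $(H_j-\Gamma_j^s)*b_Q=\sum_v(\delta-P_v^s)*H_{j,v}^s*b_Q$ and split each term as $H_{j,v}^s*b_Q$ minus its projected version, using the cancellation $\int b_Q=0$. The plan has three steps.
\begin{itemize}
\item[(a)] Replace $H^s_{j,v}*b_Q$ by $H^s_{j,v}*b_Q-H^s_{j,v}(\cdot-c_Q)\int b_Q$. This is zero modulo the cancellation.
\item[(b)] On the Fourier side, the factor $1-\widehat{P_v^s}$ localizes to $|\langle\xi,e_v\rangle|\ge 2^{-s(1-\kappa)}|\xi|$.
\item[(c)] In polar coordinates $x=r\theta$ with $\theta\in E_v$, the multiplier $\widehat{H_{j,v}}(\xi)$ in this region decays like $(2^j|\langle\xi,\theta\rangle|)^{-N}$ after $N$ radial integrations by parts. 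Each integration by parts costs a factor of $M_N(H)$, and $\langle\xi,\theta\rangle\approx\langle\xi,e_v\rangle$ up to an error of $2^{-s}|\xi|$.
\end{itemize}
I would then split frequencies at $|\xi|\approx 2^{-j+s-\varepsilon s}$. At low frequencies, the Taylor expansion of $\hat b_Q$ (mean zero, diameter $2^{j-s}$) gives a gain of $2^{-\varepsilon s}$ times $M_0$. At high frequencies, the decay $(2^j2^{-s(1-\kappa)}|\xi|)^{-N}$, converted to an $L^1$ bound via a Sobolev or Bernstein-type estimate with $N\ge n+1$, gives $M_N2^{[n+(\varepsilon-\kappa)N]s}$. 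Here the $2^{ns}$ accounts for the loss from passing to $L^1$ and summing over the $2^{(n-1)s}$ caps. Summing over $v$ yields \eqref{eq:seeger-L1-general}.

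I expect the main obstacle to be the $L^1$ (rather than $L^2$) control in (ii). This needs kernel estimates in physical space: the kernel of $(\delta-P_v^s)$ is not integrable uniformly, so one must exploit that it acts on the smooth radial structure. I would follow Seeger's device of estimating $\partial_\xi$-derivatives of the multiplier to bound the kernel pointwise with weights $(1+2^{-j}|x|)^{-n-1}$ in the anisotropic geometry adapted to $e_v$.
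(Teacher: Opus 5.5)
The paper does not prove this lemma; it only cites Seeger. So reconstructing Seeger's argument is the intended route. Your sketch is not yet a proof, though, and it breaks at exactly the two places you flag.

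\textbf{Part (i).} The almost-orthogonality step is misstated, and your intermediate powers of $2^s$ do not close. The correct estimate is $\norm{\sum_vP_v^s*g_v}_2^2\le\bigl(\sup_\xi\operatorname{card}\{v:\widehat{P_v^s}(\xi)\ne0\}\bigr)\sum_v\norm{g_v}_2^2$. The supremum is about $2^{(n-2+\kappa)s}$: a band of measure $2^{-s(1-\kappa)}$ covered by caps of measure $2^{-(n-1)s}$. It is neither $2^{\kappa s}$ nor harmless.

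The per-cap bound must be $CM_0(H)^2\alpha\,2^{-2(n-1)s}\sum_Q\norm{f_Q}_1$. One factor $2^{-(n-1)s}$ comes from $\norm{H^s_{k,v}}_1\le CM_0(H)\sigma(E^s_v)$. The other comes from the tube volume $2^{jn}2^{-(n-1)s}$ against $\norm{H^s_{j,v}}_\infty\le CM_0(H)2^{-jn}$. There is no tube-length factor and no extra $2^s$. With these, $2^{(n-2+\kappa)s}\cdot2^{(n-1)s}\cdot2^{-2(n-1)s}=2^{-(1-\kappa)s}$.

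The inner sum over $k\le j$ in the $TT^*$ expansion must also be taken all at once. For fixed $x$, bound $\norm{H^s_{k,v}}_1$ uniformly in $k$. Then use that the cubes of all sidelengths $2^{k-s}\le2^{j-s}$ meeting a single tube of dimensions $2^j\times(2^{j-s})^{n-1}$ are pairwise disjoint and lie in a fixed dilate of it. If you bound each pair $(j,k)$ separately and then sum over $k$, the sum diverges.

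\textbf{Part (ii).} Your proposed remedy for the $L^1$ problem fails at low frequencies. The factor $1-\widehat{P_v^s}$ is homogeneous of degree zero, and $\widehat{H_{j,v}^s}(0)=\int H^s_{j,v}\ne0$ in general. So $(1-\widehat{P_v^s})\widehat{H^s_{j,v}}$ is discontinuous at $\xi=0$, and its kernel is not in $L^1$. Hence no weighted pointwise bound by $(1+2^{-j}|x|)^{-n-1}$ exists. The Fourier-side estimate $|\widehat{b_Q}(\xi)|\le C|\xi|\ell(Q)\norm{b_Q}_1$ with a single frequency split does not by itself give an $L^1$ bound either.

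The missing idea is a Littlewood--Paley decomposition $\sum_kL_k$, combined with the following observation. The multiplier $\widehat{P_v^s}(\xi)\beta(2^{-k}|\xi|)$ is a bump adapted to a plate of dimensions $2^{k-s(1-\kappa)}\times2^k\times\cdots\times2^k$, so $P_v^sL_k$ is bounded on $L^1$ uniformly in $v,k,s$.
\begin{itemize}
\item Low frequencies, $2^k\le2^{(1-\varepsilon)s-j}$: the mean-zero condition gives $\norm{L_kb_Q}_1\le C2^k\ell(Q)\norm{b_Q}_1$. Together with $\sum_v\norm{H^s_{j,v}}_1\le CM_0(H)$, summing over these $k$ gives $CM_0(H)2^{-\varepsilon s}\norm{b_Q}_1$.
\item High frequencies: you may assume $\varepsilon\le\kappa$, since otherwise the bound follows from the case $\varepsilon=\kappa$ and $M_0(H)\le M_N(H)$. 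Then every $\xi$-derivative of $(1-\widehat{P_v^s})\widehat{H^s_{j,v}}\beta(2^{-k}|\cdot|)$ costs at most $2^j$. Your radial integration by parts applies equally to $x^\alpha H^s_{j,v}$. A weighted Plancherel argument then gives the $L^1$ bound $CM_N(H)\sigma(E^s_v)2^{(j+k)n/2}(2^{j+k-s(1-\kappa)})^{-N}$. Summing over $k$ and $v$ yields the second term.
\end{itemize}
Since $\sum_v\sigma(E^s_v)\le C$, the caps cost nothing here. The power of $2^s$ comes only from passing from $L^2$ to $L^1$, not from summing over $2^{(n-1)s}$ caps as you state.
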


\smallskip\smallskip

The two parts of this lemma have different hypotheses. Part \textup{(i)}
uses the support and mass bounds on the cubes; part \textup{(ii)} also
needs $\int_Qb_Q=0$. In particular, the mean correction in
Section \ref{subsec:mean-correction} is needed for the remainder estimate.

We choose the parameters so that both terms on the right-hand side of
\eqref{eq:seeger-L1-general} decay in $s$, while
\eqref{eq:seeger-L2} retains positive decay as well. A convenient fixed
choice is
\begin{equation}\label{eq:seeger-parameters}
 \kappa=\frac12,\qquad
 \varepsilon=\frac14,\qquad
 N=N_S:=5(n+1).
\end{equation}
Then
\begin{equation}\label{eq:seeger-exp-calc}
 1-\kappa=\frac12,
 \qquad
 n+(\varepsilon-\kappa)N_S
 =n-\frac{5(n+1)}4
 =-\frac{n+5}{4}.
\end{equation}
The first remainder term therefore decays like $2^{-s/4}$, and the
second decays faster. This explains why the derivative order $N_S$
was chosen in \eqref{eq:NS}; no optimization of these exponents is needed.
Consequently, \eqref{eq:seeger-L2} becomes
\begin{equation}\label{eq:seeger-L2-fixed}
 \norm{\sum_j\Gamma_j^s*F_{j-s}}_2^2
 \le CM_0(H)^2\,2^{-s/2}
 \alpha\sum_{Q\in\mathcal Q}\norm{f_Q}_1,
\end{equation}
and \eqref{eq:seeger-L1-general} gives
\begin{align}
 \norm{(H_j-\Gamma_j^s)*b_Q}_1
 &\le C\left[M_0(H)2^{-s/4}
 +M_{N_S}(H)2^{-(n+5)s/4}\right]\norm{b_Q}_1\notag\\
 &\le C\bigl(M_0(H)+M_{N_S}(H)\bigr)
 2^{-s/4}\norm{b_Q}_1.
 \label{eq:seeger-L1-fixed}
\end{align}

\subsection{Verification of Seeger's hypotheses}\label{subsec:seeger-verification}

Before applying these bounds, we verify separately the kernel conditions
and the conditions on the corrected bad functions. This also identifies
which constants depend on the truncation of $\Omega$ and which are
uniform in the Fourier frequency.

\medskip
\noindent{\emph{The convolution kernels}.}
Fix the scale separation $s\ge s_*$ and set
\begin{equation}\label{eq:seeger-our-kernel}
 H_j=\HH_{j,s}.
\end{equation}
By \eqref{eq:reg-bounds},
\begin{equation}\label{eq:our-seeger-support}
 \supp\HH_{j,s}\subset\{2^{j-1}\le|x|\le2^{j+1}\},
\end{equation}
so Seeger's support hypothesis \eqref{eq:seeger-support} is satisfied.

By \eqref{eq:reg-radial} and \eqref{eq:NS},
\begin{equation}\label{eq:our-MN}
 M_0(H)+M_{N_S}(H)\le C2^{\eta_0s}\norm\Omega_1.
\end{equation}
Apart from the displayed factors $2^{\eta_0s}\norm\Omega_1$, the bound
is uniform in $j,s$ and the frequency index $\gamma$.
Only radial derivatives are required, exactly as verified in
\eqref{eq:reg-radial}.

\medskip
\noindent{\emph{The corrected bad functions}.}
Fix $j,h$ and a Fourier frequency $\gamma_2\in\Z^n$ in the $y$ variable.
By \eqref{eq:mode-correct}--\eqref{eq:mode-mass}, for each $Q\in\mathcal Q_h$,
\begin{equation}\label{eq:our-seeger-input}
 \supp\widetilde f_{Q,\gamma_2}\subset Q,
 \qquad
 \int_Q\widetilde f_{Q,\gamma_2}=0,
 \qquad
 \norm{\widetilde f_{Q,\gamma_2}}_1\le C\tau|Q|,
\end{equation}
and
\begin{equation}\label{eq:our-seeger-mass}
 \sum_{Q\in\mathcal Q_h}\norm{\widetilde f_{Q,\gamma_2}}_1
 \le2\sum_{Q\in\mathcal Q_h}\norm{b_Q}_1.
\end{equation}
Moreover, each $Q\in\mathcal Q_h$ has sidelength exactly
\begin{equation}\label{eq:our-seeger-scale}
 \ell(Q)=2^{j-s}.
\end{equation}
Thus the kernel scale, the cube scale, and the support and mass bounds
match the hypotheses of part \textup{(i)}. The ordinary mean-zero
condition additionally permits part \textup{(ii)}. Although the corrected
functions depend on the phase and the Fourier frequency, their support,
cancellation, and mass bounds are uniform. We then use
\eqref{eq:fourier-weight} to sum the Fourier coefficients.

\subsection{The microlocal main term}

Fix $s\ge s_*$, $1\le j\le\varrho s$, and a local region $h$
with $\mathfrak M_h\le2^{\sigma s}$.
For each Fourier frequency $\gamma_2\in\Z^n$, apply
Lemma~\ref{lem:seeger}\textup{(i)} with
$H_\ell=\HH_{\ell,s}$, the cube family $\mathcal Q_h$,
$f_Q=\widetilde f_{Q,\gamma_2}$, and $\alpha=C\tau$.
The conditions follow from
\eqref{eq:our-seeger-input}--\eqref{eq:our-seeger-scale}.
Since all these cubes have sidelength $2^{j-s}$,
only the term $\ell=j$ remains in
\eqref{eq:seeger-L2-fixed}.
Thus \eqref{eq:our-MN} and \eqref{eq:our-seeger-mass}
give
\begin{equation}\label{eq:mode-G-seeger}
	\norm{\Gamma_{j,\Omega_s}^s*
		\widetilde F_{\gamma_2}}_2^2
	\le C\tau 2^{2\eta_0s}\norm\Omega_1^2
	2^{-s/2}
	\sum_{Q\in\mathcal Q_h}\norm{b_Q}_1.
\end{equation}
Taking square roots yields
\begin{equation}\label{eq:mode-G-seeger-root}
	\norm{\Gamma_{j,\Omega_s}^s*
		\widetilde F_{\gamma_2}}_2
	\le C\sqrt{\tau
		\sum_{Q\in\mathcal Q_h}\norm{b_Q}_1}\,
	2^{\eta_0s}\norm\Omega_1\,2^{-s/4}.
\end{equation}

In \eqref{eq:Gh}, the cutoff $\chi_h^x$ is bounded
by $1$, and all the remaining factors depending
on $x$ have modulus $1$,$\left|e^{iP(x_h,x_h)+iA_h(x)}e^{i\pi\gamma_1\cdot(x-x_h)/(L_0R)}\right|=1$.
Hence Minkowski's inequality,
\eqref{eq:mode-G-seeger-root}, and
\eqref{eq:fourier-weight} give
\begin{align}
	\norm{G_h}_2
	&\le \sum_{\gamma}|c_{\gamma}^h|
	\norm{\Gamma_{j,\Omega_s}^s*
		\widetilde F_{\gamma_2}}_2\notag\\
	&\le C\sqrt{\tau
		\sum_{Q\in\mathcal Q_h}\norm{b_Q}_1}\,
	2^{\eta_0s}\norm\Omega_1
	\,2^{-s/4}2^{N_0\sigma s}\notag\\
	&=C\sqrt{\tau
		\sum_{Q\in\mathcal Q_h}\norm{b_Q}_1}\,
	2^{\eta_0s}\norm\Omega_1
	\,2^{-(1/4-N_0\sigma)s}.
	\label{eq:Gh-seeger}
\end{align}
Consequently,
\begin{equation}\label{eq:Gh-seeger-square}
	\norm{G_h}_2^2
	\le C\tau 2^{2\eta_0s}\norm\Omega_1^2
	2^{-(1/2-2N_0\sigma)s}
	\sum_{Q\in\mathcal Q_h}\norm{b_Q}_1.
\end{equation}

For $\mathfrak M_h>2^{\sigma s}$, the definitions
give $G_h=U_h$, and
Lemma~\ref{lem:large}, through
\eqref{eq:large-application}, yields
\begin{equation}\label{eq:Gh-large-repeat}
	\norm{G_h}_2^2
	\le C\tau 2^{2\eta_0s}\norm\Omega_1^2
	2^{-\delta_0s}
	\sum_{Q\in\mathcal Q_h}\norm{b_Q}_1.
\end{equation}
Thus, setting
\begin{equation}\label{eq:kappaG}
	\kappa_3=\min\left\{
	\delta_0,\frac12-2N_0\sigma
	\right\}>0,
\end{equation}
we obtain a uniform estimate for every $h$.

Since the cubes $I_h$ are pairwise disjoint,
the restrictions $\one_{I_h}G_h$ have disjoint
supports. Summing their squared $L^2$ norms
and using \eqref{eq:parent-overlap}, we obtain
\begin{align}
	\norm{G_{j,s}}_2^2
	&=\sum_h\norm{\one_{I_h}G_h}_2^2\notag\\
	&\le C\tau 2^{2\eta_0s}\norm\Omega_1^2
	2^{-\kappa_3s}
	\sum_h\sum_{Q\in\mathcal Q_h}\norm{b_Q}_1
	\notag\\
	&\le C\tau 2^{2\eta_0s}\norm\Omega_1^2
	2^{-\kappa_3s}\norm{B_{j-s}}_1.
	\label{eq:prop-main-seeger}
\end{align}
This proves \eqref{eq:prop-main}.

\subsection{The microlocal remainder and choice of parameters}

For the remainder, we use the ordinary mean-zero
condition of the corrected bad functions.
When $\mathfrak M_h\le2^{\sigma s}$, each
$Q\in\mathcal Q_h$ satisfies
\[
\int_Q\widetilde f_{Q,\gamma_2}=0
\]
by \eqref{eq:our-seeger-input}.
Applying Lemma~\ref{lem:seeger}\textup{(ii)}
through \eqref{eq:seeger-L1-fixed}
and \eqref{eq:our-MN}, we obtain
\begin{equation}\label{eq:single-rem-seeger}
	\norm{(\HH_{j,s}-\Gamma_{j,\Omega_s}^s)*
		\widetilde f_{Q,\gamma_2}}_1
	\le C2^{\eta_0s}\norm\Omega_1
	\,2^{-s/4}\norm{\widetilde f_{Q,\gamma_2}}_1.
\end{equation}
Summing over $Q\in\mathcal Q_h$ and using
\eqref{eq:our-seeger-mass} gives
\begin{equation}\label{eq:mode-rem-seeger}
	\begin{aligned}
		&\norm{\HH_{j,s}*\widetilde F_{\gamma_2}
			-\Gamma_{j,\Omega_s}^s*
			\widetilde F_{\gamma_2}}_1\\
		&\qquad\le C2^{\eta_0s}\norm\Omega_1
		\,2^{-s/4}
		\sum_{Q\in\mathcal Q_h}\norm{b_Q}_1.
	\end{aligned}
\end{equation}

The cutoff and unimodular factors in
\eqref{eq:Eh} do not increase the $L^1$ norm.
By the triangle inequality and
\eqref{eq:fourier-weight},
\begin{align}
	\norm{E_h}_1
	&\le C2^{\eta_0s}\norm\Omega_1\,2^{-s/4}
	\sum_{Q\in\mathcal Q_h}\norm{b_Q}_1
	\sum_{\gamma}|c_{\gamma}^h|\notag\\
	&\le C2^{\eta_0s}\norm\Omega_1
	2^{-(1/4-N_0\sigma)s}
	\sum_{Q\in\mathcal Q_h}\norm{b_Q}_1.
	\label{eq:Eh-seeger}
\end{align}
For $\mathfrak M_h>2^{\sigma s}$, we have
$E_h=0$ by definition.
Set
\begin{equation}\label{eq:kappaE}
	\kappa_4=\frac14-N_0\sigma>0.
\end{equation}
Summing over $h$ and using
\eqref{eq:parent-overlap}, we obtain
\begin{align}
	\norm{E_{j,s}}_1
	&\le\sum_h\norm{\one_{I_h}E_h}_1\notag\\
	&\le C2^{\eta_0s}\norm\Omega_1
	2^{-\kappa_4s}
	\sum_h\sum_{Q\in\mathcal Q_h}\norm{b_Q}_1
	\notag\\
	&\le C2^{\eta_0s}\norm\Omega_1
	2^{-\kappa_4s}\norm{B_{j-s}}_1.
	\label{eq:prop-rem-seeger}
\end{align}
This proves \eqref{eq:prop-rem}.

It remains to check the decay exponents.
The mean-correction estimate in
Section~\ref{sec:reduction} has exponent
\begin{equation}\label{eq:kappaR-final}
	\kappa_2=1-N_0\sigma>0.
\end{equation}
The choice \eqref{eq:sigma} also gives
\begin{equation}\label{eq:low-kappas-final}
	\frac12-2N_0\sigma>0,
	\qquad
	\frac14-N_0\sigma>0.
\end{equation}
Moreover, Lemma~\ref{lem:large} gives
\[
\delta_0=\frac{c\sigma}{4(N+c)D}>0,
\]
where $c=1/(258D)$ and $N=16$ are the constants
in Lemma~\ref{lem:twocenter}.
Thus $\kappa_3$ and $\kappa_4$ are positive.

Choosing $\eta_0$ according to \eqref{eq:eta}
absorbs the growth of $\Omega_s$, and a uniform
$s_*(n,D)>3$ may be taken sufficiently large
for the fixed support and localization conditions.
Together with the mean-correction estimate from
Section~\ref{sec:reduction},
\eqref{eq:prop-main-seeger} and
\eqref{eq:prop-rem-seeger} establish the bounds
required in Theorem~\ref{prop:lowfull}
and complete the low positive scale summation
in \eqref{eq:low-main-total}--\eqref{eq:R-total}.

\section{Weak type \texorpdfstring{$(1,1)$}{(1,1)} estimates for the rough  maximal oscillatory operator}\label{sec:maximal}

We retain the notation of Sections~\ref{sec:intro}--\ref{sec:microlocal},
in particular the smooth operators $T_j$ defined by
\eqref{eq:partition}--\eqref{eq:Tj}, the quantity
$A=1+\|\Omega\|_{L\log^+L}$, and the parameters chosen in
Section~\ref{sec:main-proof}. Write $K(z)=\Omega(z)/|z|^n$.

\subsection{What remains to be proved}\label{subsec:max-roadmap}

Theorem~\ref{thm:main} controls the principal value operator.  The maximal operator in Theorem~\ref{thm1} asks for a bound uniform in the truncation radius, so one additional issue appears: after the dyadic decomposition we must control the maximal tails
\[
\mathcal M_P^+f(x)=\sup_{J\ge1}\left|\sum_{j\ge J}T_jf(x)\right|.
\]
The purpose of this section is to show that the estimates already proved for the positive scales are stable under this supremum.

The proof has four parts.  First, the local truncations $0<\varepsilon<1$ are treated by the same induction on the largest mixed bidegree $(m,l)$ used in Section~\ref{subsec:local}; the fixed far piece is then reduced to Theorem~\ref{thm:main}.  Second, a general truncation radius is compared with a dyadic tail plus the rough maximal operator $M_\Omega$.  Third, for the Calder\'on--Zygmund good function we prove a summable single-annulus $L^2$ estimate, which immediately controls the maximal tail.  Finally, for the bad function the low positive scales are controlled by the decomposition of Theorem~\ref{prop:lowfull}, while the high positive scales require one new step: the signed $L^2$ estimate implicit in Theorem~\ref{prop:high} is combined with the Rademacher--Menshov inequality to control all partial tails simultaneously.  Once these four estimates are established, the proof is completed by the same $L^1/L^2$ summation used in Section~\ref{sec:main-proof}.

\smallskip

\subsection{Reduction to local truncations and positive dyadic tails}\label{subsec:max-reduction}

The conclusion is trivial when $\Omega=0$. Henceforth assume that $\|\Omega\|_1>0$.  As in Section~\ref{subsec:local}, we induct in lexicographic order on the largest mixed bidegree $(m,l)$.

Let $T_\Omega^*$ denote the  maximal truncated
rough singular integral operator, defined by
\[
T_\Omega^*g(x)
:=\sup_{\varepsilon>0}
\left|
\int_{|x-y|>\varepsilon}
\frac{\Omega(x-y)}{|x-y|^n}g(y)\,dy
\right|.
\]For the induction basis, if $P$ has no mixed terms, write $P(x,y)=a(x)+c(y)$. Then
\[
T_P^*f(x)=T_\Omega^*(e^{ic}f)(x),
\]
since the factor $e^{ia(x)}$ has modulus $1$. The weak type $(1,1)$ estimate for $T_\Omega^*$ is due to Lai~\cite{Lai2025Maximal}, hence the desired estimate follows in this case.

Now assume $m,l\ge1$. After removing the pure phases, apply the dilation from Section~\ref{subsec:local}.  The truncation radius $\varepsilon$ is replaced by $B\varepsilon$, whose range is still $(0,\infty)$.  We may therefore assume
\begin{equation}\label{eq:max-normal}
\sum_{|\alpha|=m,\,|\beta|=l}|a_{\alpha\beta}|=1.
\end{equation}
The induction hypothesis is that Theorem~\ref{thm1} holds whenever, after removal of the pure phases, the largest mixed bidegree is strictly smaller than $(m,l)$.

\Needspace{15\baselineskip}
For a general phase, splitting the truncation radius into two ranges gives
\[
\begin{aligned}
	T_P^*f(x)
	&\le \sup_{0<\varepsilon<1}
	\left|\int_{|x-y|>\varepsilon}e^{iP(x,y)}K(x-y)f(y)\,dy\right|\\
	&\quad+\sup_{\varepsilon\ge1}
	\left|\int_{|x-y|>\varepsilon}e^{iP(x,y)}K(x-y)f(y)\,dy\right|\\
	&\le \sup_{0<\varepsilon<1}
	\left|\int_{\varepsilon<|x-y|<1}e^{iP(x,y)}K(x-y)f(y)\,dy\right|\\
	&\quad+\left|\int_{|x-y|\ge1}e^{iP(x,y)}K(x-y)f(y)\,dy\right|\\
	&\quad+\sup_{\varepsilon\ge1}
	\left|\int_{|x-y|>\varepsilon}e^{iP(x,y)}K(x-y)f(y)\,dy\right|.
\end{aligned}
\]
Denote the first and third terms in the last expression by
$T_{P,0}^*f(x)$ and $T_{P,\infty}^*f(x)$, respectively, and denote
the integral in the middle term by $F_Pf(x)$. Thus
\begin{equation}\label{eq:max-three}
	T_P^*f(x)\le T_{P,0}^*f(x)+|F_Pf(x)|+T_{P,\infty}^*f(x).
\end{equation}

\subsection{The local part $ T_{P,0}^*$ and the fixed far part $F_P$}\label{subsec:max-local}

Use the covering and localized functions from Section~\ref{subsec:local}:
\[
\mathbb R^n=\bigcup_\nu B(h_\nu,1/4),\qquad
\sum_\nu\one_{B(h_\nu,5/4)}\le C,\qquad
f_\nu=f\one_{B(h_\nu,5/4)}.
\]
If $x\in B(h_\nu,1/4)$ and $|x-y|<1$, then
$y\in B(h_\nu,5/4)$, and hence
$T_{P,0}^*f(x)=T_{P,0}^*f_\nu(x)$.
Fix $\nu$, write $h=h_\nu$, and set
\[
\begin{gathered}
	P_{m,l}(u,v)=\sum_{|\alpha|=m,\,|\beta|=l}a_{\alpha\beta}u^\alpha v^\beta,
	\qquad R_h(x,y)=P(x,y)-P_{m,l}(x-h,y-h),\\
	\Phi_h(y)=P_{m,l}(y-h,y-h),\qquad u_\nu=e^{i\Phi_h}f_\nu.
\end{gathered}
\]
By \eqref{eq:phase-expand-h}--\eqref{eq:phase-difference}, the mixed
bidegree of $R_h$ strictly decreases after removal of the pure phases, and
\[
\left|e^{iP(x,y)}-e^{iR_h(x,y)}e^{i\Phi_h(y)}\right|
\le C|x-y|
\quad\bigl(x\in B(h,1/4),\ |x-y|<1\bigr).
\]
This bound is independent of $\varepsilon$. For every
$0<\varepsilon<1$, we also have
\[
\left|\int_{\varepsilon<|x-y|<1}e^{iR_h(x,y)}K(x-y)u_\nu(y)\,dy\right|
\le 2T_{R_h}^*u_\nu(x),
\]
since the integral is the difference of two integrals with truncation
radius $\varepsilon$ and $1$. Here $T_{R_h}^*$ is obtained from
$T_P^*$ by replacing $P$ with $R_h$. Thus, on $B(h_\nu,1/4)$,
\begin{equation}\label{eq:max-local-compare}
	\begin{aligned}
		T_{P,0}^*f(x)&\le 2T_{R_h}^*u_\nu(x)+H_\nu(x),\\
		H_\nu(x)&=C\int_{|x-y|<1}
		\frac{|\Omega(x-y)|}{|x-y|^{n-1}}|f_\nu(y)|\,dy.
	\end{aligned}
\end{equation}
The error estimate following \eqref{eq:phase-difference} in
Section~\ref{subsec:local} gives
$\|H_\nu\|_1\le C\|\Omega\|_1\|f_\nu\|_1$.
The induction hypothesis, the identity
$\|u_\nu\|_1=\|f_\nu\|_1$, and Chebyshev's inequality therefore yield
\[
|\{x\in B(h_\nu,1/4):T_{P,0}^*f(x)>\lambda\}|
\le \frac{CA}{\lambda}\|f_\nu\|_1.
\]
Summing over $\nu$ and using bounded overlap, we obtain
\begin{equation}\label{eq:max-local-weak}
	\lambda|\{x:T_{P,0}^*f(x)>\lambda\}|\le CA\|f\|_1.
\end{equation}
This completes the estimate for the local part $ T_{P,0}^*$.

For the fixed far part $F_P$, use the local principal value operator from
Section~\ref{subsec:local},
\[
S_Pf(x)=\operatorname{p.v.}\int_{|x-y|<1}e^{iP(x,y)}K(x-y)f(y)\,dy.
\]
The identity $F_Pf=T_Pf-S_Pf$, Theorem~\ref{thm:main}, and
Lemma~\ref{lem:localtrunc} with truncation radius $1$ give
\begin{equation}\label{eq:max-fixed-weak}
	\begin{aligned}
		\lambda|\{x:|F_Pf(x)|>\lambda\}|
		&\le\lambda|\{x:|T_Pf(x)|>\lambda/2\}|\\
		&\quad+\lambda|\{x:|S_Pf(x)|>\lambda/2\}|
		\le CA\|f\|_1.
	\end{aligned}
\end{equation}

\subsection{Dyadic decomposition of the far part $T_{P,\infty}^*$}\label{subsec:max-dyadic}

Given $\varepsilon\ge1$, choose the unique integer $J\ge1$ such that
$2^{J-1}\le\varepsilon<2^J$. By \eqref{eq:partition},
\[
\left|\one_{(\varepsilon,\infty)}(r)
-\sum_{j\ge J}\phi(2^{-j}r)\right|
\le\one_{[2^{J-1},2^J]}(r).
\]
Consequently,
\begin{align}
	T_{P,\infty}^*f(x)
	&\le\sup_{J\ge1}\int_{2^{J-1}\le|z|<2^J}
	\frac{|\Omega(z)|}{|z|^n}|f(x-z)|\,dz
	+\mathcal M_P^+f(x)\notag\\
	&\le C M_\Omega f(x)+\mathcal M_P^+f(x),\label{eq:max-dyadic}
\end{align}
where
\[
\begin{aligned}
	M_\Omega f(x)&=\sup_{r>0}r^{-n}
	\int_{|x-y|<r}|\Omega(x-y)|\,|f(y)|\,dy,\\
	\mathcal M_P^+f(x)&=\sup_{J\ge1}\left|\sum_{j\ge J}T_jf(x)\right|.
\end{aligned}
\]
The weak type estimate for the rough maximal operator
\cite{Christ1988,CR1988} gives
$\lambda|\{M_\Omega f>\lambda\}|\le CA\|f\|_1$.
It remains to estimate $\mathcal M_P^+$.

\subsection{Estimate for the good function}\label{subsec:max-good}

Fix $\lambda>0$ and set $\tau=\lambda/A$. Apply
Lemma~\ref{lem:adapted-CZ} with $\vartheta_Q(y)=P(c_Q,y)$,
exactly as in Section~\ref{sec:main-proof}, to write
$f=g+b$, where $b=\sum_{Q\in\mathcal Q}b_Q$.
The bounds \eqref{eq:CZ} and the cancellation \eqref{eq:cancel}
remain valid. We first estimate the good function. We have
\[
\|g\|_1\le \|f\|_1,
\qquad
\|g\|_\infty\le C\lambda/A,
\]
and therefore
\begin{equation}\label{eq:good-L2}
	\|g\|_2^2
	\le \|g\|_\infty\|g\|_1
	\le C\frac{\lambda}{A}\|f\|_1.
\end{equation}
Thus the good-function estimate follows once we establish an
$L^2$ bound for the positive-scale maximal operator. The required
estimate is the following.
\begin{lemma}\label{lem:max-L2}
	Under the normalization \eqref{eq:max-normal},
	\begin{equation}\label{eq:max-L2-sum}
		\sum_{j\ge1}\|T_j\|_{2\to2}\le CA,
		\qquad
		\|\mathcal M_P^+u\|_2\le CA\|u\|_2
		\quad (u\in L^2).
	\end{equation}
\end{lemma}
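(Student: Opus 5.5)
The plan is to prove the first inequality in \eqref{eq:max-L2-sum} by a single-annulus $TT^*$ argument, and then obtain the second from it by a pointwise triangle inequality. For every $x$ and $J\ge1$,
\[
\Bigl|\sum_{j\ge J}T_ju(x)\Bigr|\le\sum_{j\ge1}|T_ju(x)|,
\]
so Minkowski's inequality gives $\|\mathcal M_P^+u\|_2\le\sum_{j\ge1}\|T_j\|_{2\to2}\|u\|_2$. Hence everything reduces to a summable bound for $\|T_j\|_{2\to2}$, uniform in the coefficients of $P$ under \eqref{eq:max-normal}.

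Fix $j\ge1$ and a small $\eta>0$, and truncate $\Omega$ at level $j$ rather than $s$:
\[
\Omega^{(j)}=\Omega\one_{\{|\Omega|\le2^{\eta j}\|\Omega\|_1\}},\qquad \nu^{(j)}=\Omega-\Omega^{(j)}.
\]

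\emph{The large part.} The kernel of $T_{j,\nu^{(j)}}$ is bounded by $|\nu^{(j)}(x-y)|k_j(x-y)$. Schur's test (Lemma~\ref{lem:schur}) or Young's inequality gives $\|T_{j,\nu^{(j)}}\|_{2\to2}\le C\|\nu^{(j)}\|_1$. The Tonelli computation of \eqref{eq:angular-tail}, with $s$ replaced by $j$, gives $\sum_j\|\nu^{(j)}\|_1\le CA$.

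\emph{The bounded part.} Write $\|T_{j,\Omega^{(j)}}\|_{2\to2}^2=\|T_{j,\Omega^{(j)}}^*T_{j,\Omega^{(j)}}\|_{2\to2}$. This operator has kernel $G_{jj}(y,z)$ as in \eqref{eq:correlation}, supported in $|y-z|\le CR$ with $R=2^j$. By Schur's test it suffices to show
\[
\sup_z\int|G_{jj}(y,z)|\,dy\le C\|\Omega\|_1^2\,2^{-(\kappa-2\eta)j}
\]
for some $\kappa>0$ depending only on $n$ and $D$. I follow Step~2 of the proof of Theorem~\ref{prop:high}, with $s$ replaced by $j$ and with no bad cubes, so Lebesgue measure in $y$ replaces the cube masses. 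Choose $q=q_{\alpha_*}$ whose degree-$l$ coefficients have absolute sum at least $c$, which is possible by \eqref{eq:max-normal}. For a small $\beta>0$, split $B(z,CR)$ into three regions.
\begin{enumerate}
\item The near set $\mathcal N=\{|y-z|\le R2^{-\beta j}\}$ has measure $CR^n2^{-n\beta j}$.
\item The degenerate set $\mathcal Z=\{|q(y)-q(z)|\le R2^{-\beta j}\}$ has measure $\le CR^n2^{-\beta j/D}$. This follows from \eqref{eq:sublevel} after rescaling $y=z+Ru$, using $\|q(z+R\cdot)-q(z)\|_\coef\ge cR^l\ge cR$.
\end{enumerate}
On these two sets I use the direct bound $|G_{jj}|\le C2^{2\eta j}\|\Omega\|_1^2R^{-n}$ from \eqref{eq:high-direct}. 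On the complement $\mathcal G$ I apply \eqref{eq:two-high} of Lemma~\ref{lem:twocenter} with $r=R=2^j$. The phase $Q(x)=P(x,y)-P(x,z)$ has top $x$-degree $m\ge1$ and
\[
A_m\ge|q(y)-q(z)|>R2^{-\beta j},\qquad R2^{-\beta j}<d\le CR.
\]
Hence
\[
|G_{jj}|\le C2^{2\eta j}\|\Omega\|_1^2R^{-n}\,2^{N\beta j}\,\bigl(2^{\beta j}R^{-(m+1)}\bigr)^c\le C2^{2\eta j}\|\Omega\|_1^2R^{-n}2^{[(N+c)\beta-2c]j}.
\]
The amplitude hypotheses are checked exactly as in Step~3 of that proof, now with $r/R=1$.

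Take $\beta=c/(4(N+c))$ and then $\eta$ small compared with $\beta/D$. Combining the three regions gives the Schur bound with $\kappa=\beta/D$. Therefore $\|T_{j,\Omega^{(j)}}\|_{2\to2}\le C\|\Omega\|_1 2^{-(\kappa/2-\eta)j}$, which is summable in $j$ with sum $\le C\|\Omega\|_1\le CA$. Adding the large part proves $\sum_{j\ge1}\|T_j\|_{2\to2}\le CA$.

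The main obstacle is to ensure that the decay is genuinely uniform in the coefficients of $P$. This works only because the normalization \eqref{eq:max-normal} forces the top-bidegree block to have size one, and because $j\ge1$, so that $R^{m+1}\ge 2^{2j}$ supplies the gain. I would double-check that the sublevel estimate for $\mathcal Z$ uses only this normalized block, and that the constants in Lemma~\ref{lem:twocenter} do not depend on $y$ or $z$.
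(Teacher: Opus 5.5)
Your proposal is correct and follows essentially the same route as the paper: a single-annulus $TT^*$ argument that splits $B(z,CR)$ into a near-diagonal region, a polynomial sublevel region for the normalized top block $q$, and their complement where Lemma~\ref{lem:twocenter} applies; then Schur's test, a $j$-dependent truncation of $\Omega$ whose tail is summed by the Tonelli computation of \eqref{eq:angular-tail}; and finally Minkowski's inequality for $\mathcal M_P^+$. The differences are only cosmetic. You apply \eqref{eq:two-high} with $r=R$ and sublevel threshold $R2^{-\beta j}$, whereas the paper uses the equal-scale bound \eqref{eq:two-equal} with threshold $R^{l-\beta}$. You also truncate $\Omega$ directly at level $2^{\eta j}\|\Omega\|_1$, whereas the paper first proves the bounded case and then truncates.
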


\begin{proof}
	We first consider the case
	$\Omega\in L^\infty(\mathbb S^{n-1})$.
	The proof of Lemma~\ref{lem:twocenter} uses only the upper
	bound for the rough factors and does not use their mean-zero
	condition. Thus its estimates hold for any bounded $\Omega$.
	Set $R=2^j$, and let $G_j$ be the correlation kernel of
	$T_j^*T_j$, as in \eqref{eq:correlation}.
	Choose $q=q_{\alpha_*}$ as in the proof of Theorem~\ref{prop:high} in Section~\ref{sec:high}
	so that the sum of the absolute values of its degree-$l$ coefficients
	is at least $C^{-1}$. Then
	\[
	\|q(z+R\cdot)-q(z)\|_{\rm coef}\ge C^{-1}R^l,
	\]
	Moreover, \eqref{eq:topblock} gives
	\[
	\|P(z+R\cdot,y)-P(z+R\cdot,z)\|_{\rm nc}
	\ge R^m|q(y)-q(z)|.
	\]
	
	Take $\beta=c/[4(N+c)]$, where $c,N$ are the constants in
	Lemma~\ref{lem:twocenter}.
	As in the proof of Theorem~\ref{prop:high}, split the support
	$|y-z|\le CR$ into the near-diagonal region
	$|y-z|\le R^{1-\beta}$, the polynomial sublevel region
	$|q(y)-q(z)|\le R^{l-\beta}$, and their complement.
	The first region has measure at most $CR^{n-n\beta}$.
	For the second, apply Lemma~\ref{lem:sublevel} to
	$q(z+R\cdot)-q(z)$ to obtain the bound $CR^{n-\beta/D}$.
	On their complement, the equal-scale estimate
	\eqref{eq:two-equal} gives
	\[
	|G_j(y,z)|\le C\|\Omega\|_\infty^2R^{-n}
	R^{N\beta}R^{-c(m+l-\beta)}.
	\]
	Using the direct estimate \eqref{eq:high-direct} on the first
	two regions and integrating with respect to Lebesgue measure,
	we obtain
	\[
	\sup_z\int |G_j(y,z)|\,dy
	\le C\|\Omega\|_\infty^2
	\left(
	R^{-n\beta}
	+R^{-\beta/D}
	+R^{-c(m+l)+(N+c)\beta}
	\right).
	\]
	Since $m+l\ge2$, the choice of $\beta$ gives
	\[
	\sup_z\int |G_j(y,z)|\,dy
	\le C\|\Omega\|_\infty^2R^{-\beta/D}.
	\]
	By the symmetry of the correlation kernel and Lemma~\ref{lem:schur},
	\begin{equation}\label{eq:single-ring-L2}
		\|T_j\|_{2\to2}
		\le C\|\Omega\|_\infty2^{-\delta j},
		\qquad
		\delta=\frac{\beta}{2D}>0 .
	\end{equation}
	
	We next remove the boundedness assumption.
	For general $\Omega$, define
	\(
	\Omega_j^{\rm b}
	=
	\Omega
	\one_{\{|\Omega|\le
		2^{\delta j/2}\|\Omega\|_1\}} .
	\)
	Applying \eqref{eq:single-ring-L2} to $\Omega_j^{\rm b}$ and
	using Young's inequality for the remaining part on a single annulus,
	we obtain
	\[
	\|T_j\|_{2\to2}
	\le
	C\|\Omega\|_1 2^{-\delta j/2}
	+
	C\|\Omega-\Omega_j^{\rm b}\|_1 .
	\]
	The same tail estimate as in \eqref{eq:angular-tail} yields
	\[
	\sum_{j\ge1}\|T_j\|_{2\to2}\le CA .
	\]
	Finally,
	\[
	\mathcal M_P^+u
	\le
	\sum_{j\ge1}|T_ju|,
	\]
	and Minkowski's inequality gives
	\[
	\|\mathcal M_P^+u\|_2
	\le
	\sum_{j\ge1}\|T_j u\|_2
	\le CA\|u\|_2 .\qedhere
	\]
\end{proof}

Applying Lemma~\ref{lem:max-L2} to $g$ and using
\eqref{eq:good-L2}, Chebyshev's inequality gives
\begin{equation}\label{eq:max-good}
\begin{aligned}
	\left|
	\left\{
	x:\mathcal M_P^+g(x)>\frac{\lambda}{4}
	\right\}
	\right|
	&\le
	C\lambda^{-2}\|\mathcal M_P^+g\|_2^2 \\
	&\le
	CA^2\lambda^{-2}\|g\|_2^2 \\
	&\le
	C\frac{A}{\lambda}\|f\|_1.
\end{aligned}
\end{equation}
Hence the contribution of the good function satisfies the required
weak type $(1,1)$ estimate.

\subsection{Truncation by kernel size and low positive scales for the bad functions}
\label{subsec:max-low}

Let $\mathcal Q$ denote the family of bad cubes, so that
$b=\sum_{Q\in\mathcal Q}b_Q$ and
$B_k=\sum_{L(Q)=k}b_Q$.
Retain $E,\Omega_s,\nu_s$ from Section~\ref{sec:main-proof},
with $\Omega_s,\nu_s$ defined in \eqref{eq:Omega-s}.
By the annular support, on $E^c$ we have
\[
\mathcal M_P^+b(x)
\le \sum_{s\ge s_*}\sup_{J\ge1}
\left|\sum_{j\ge J}T_jB_{j-s}(x)\right|.
\]
We collect the terms with summable $L^1$ bounds in $V$, and
separate the remaining low- and high-scale terms as follows:

\begin{align*}
	V(x)
	&=\sum_{s\ge s_*}\sum_{j>0}
	|T_{j,\nu_s}B_{j-s}(x)|\\
	&\quad+
	\sum_{s\ge s_*}\sum_{1\le j\le\varrho s}
	\bigl(|\mathcal R_{j,s}(x)|+|E_{j,s}(x)|\bigr),\end{align*} and 
	\begin{align*}L_s(x)
	&=\sup_{J\ge1}\left|
	\sum_{\substack{j\ge J\\1\le j\le\varrho s}}
	G_{j,s}(x)\right|,\\
	H_s(x)
	&=\sup_{J\ge1}\left|
	\sum_{\substack{j\ge J\\j>\varrho s}}
	T_{j,\Omega_s}B_{j-s}(x)\right|.
\end{align*}
The decomposition in Theorem~\ref{prop:lowfull}, together with the kernel truncation in Section~\ref{sec:main-proof}, gives
\begin{equation}\label{eq:max-bad-split}
	\mathcal M_P^+b
	\le V+\sum_{s\ge s_*}L_s+\sum_{s\ge s_*}H_s
	\qquad\text{on }E^c.
\end{equation}
By \eqref{eq:large-angular} and \eqref{eq:R-total},
\begin{equation}\label{eq:max-V}
	\|V\|_1\le CA\|f\|_1.
\end{equation}

For each fixed $s$, there are at most $C(1+s)$ low positive scales.
Using \eqref{eq:prop-main}, Cauchy--Schwarz, and
$\sum_j\|B_{j-s}\|_1\le2\|f\|_1$, we obtain
\begin{align}
	\|L_s\|_2
	&\le\sum_{1\le j\le\varrho s}\|G_{j,s}\|_2
	\notag\\
	&\le C\sqrt{1+s}
	\left(
	\sum_{1\le j\le\varrho s}\|G_{j,s}\|_2^2
	\right)^{1/2}
	\notag\\
	&\le C\sqrt{1+s}\,\|\Omega\|_1
	2^{-(\kappa_3/2-\eta_0)s}\sqrt{\tau\|f\|_1}.
	\label{eq:max-low}
\end{align}
This is the same summation used in \eqref{eq:low-main-total}.
The bounds are summable in $s$ by \eqref{eq:eta};
it remains to estimate $H_s$.

\Needspace{8\baselineskip}
\subsection{Maximal tails at high positive scales and completion of the proof}
\label{subsec:max-high}

\begin{lemma}[Rademacher--Menshov inequality](Theorem 10.6 of \cite{DT}) \label{lem:RM}If
$v_1,\ldots,v_N\in L^2(\mathbb R^n)$ satisfy
\begin{equation}\label{eq:max-rm-hypothesis}
	\left\|\sum_{r=1}^N\epsilon_r v_r\right\|_2\le B
	\qquad
	\text{for every }
	(\epsilon_1,\ldots,\epsilon_N)\in\{-1,1\}^N,
\end{equation}
then
\begin{equation}\label{eq:max-rm-conclusion}
	\left\|
	\max_{0\le M\le N}
	\left|\sum_{r=1}^M v_r\right|
	\right\|_2
	\le CB\log(2+N).
\end{equation}\end{lemma}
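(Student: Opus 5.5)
The plan is the standard dyadic proof of Rademacher--Menshov, run directly on the sign hypothesis and never using orthogonality. We may pad with zero vectors, which changes neither the hypothesis nor $\log(2+N)$ by more than a constant factor, and so assume $N=2^K$. Every initial segment $\{1,\dots,M\}$ with $0\le M\le N$ then splits into at most $K$ disjoint dyadic blocks $I$ of the form $((a-1)2^k,a2^k]$, with at most one block per level $k$. This comes from the binary expansion of $M$. Writing $S_I=\sum_{r\in I}v_r$, we get pointwise
\[
\max_{M}\Bigl|\sum_{r=1}^M v_r\Bigr|
\le\sum_{k=0}^{K}\max_{a}|S_{I_{k,a}}|
\le\sum_{k=0}^{K}\Bigl(\sum_a|S_{I_{k,a}}|^2\Bigr)^{1/2}.
\]
By Minkowski's inequality the $L^2$ norm of the left-hand side is at most $\sum_{k\le K}\bigl(\sum_a\|S_{I_{k,a}}\|_2^2\bigr)^{1/2}$.

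The remaining claim is that for each fixed level $k$,
\[
\sum_a\|S_{I_{k,a}}\|_2^2\le B^2 .
\]
To prove it, I would take independent random signs $\epsilon_a$, one for each block at level $k$, and set $\epsilon_r=\epsilon_a$ for $r\in I_{k,a}$. Each realisation is an admissible sign vector in \eqref{eq:max-rm-hypothesis}, so $\|\sum_a\epsilon_aS_{I_{k,a}}\|_2^2\le B^2$. Averaging over the signs kills the cross terms $\mathrm{Re}\langle S_{I},S_{I'}\rangle$, because $\mathbb E\,\epsilon_a\epsilon_{a'}=\delta_{aa'}$. What remains is exactly $\sum_a\|S_{I_{k,a}}\|_2^2\le B^2$. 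Summing over the $K+1\le C\log(2+N)$ levels gives \eqref{eq:max-rm-conclusion}.

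The point needing care is that the hypothesis is stated only for sign vectors, not as orthogonality. The averaging step is precisely what turns the signed bound into the square-function bound at each dyadic level, and it needs only the Khintchine identity for second moments, so the expected obstacle is mild. Complex-valued $v_r$ cause no trouble, since the cross terms are real parts of inner products weighted by $\epsilon_a\epsilon_{a'}$. The endpoint $M=0$ is trivial. The constant $C$ is absolute.
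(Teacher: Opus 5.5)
Your proof is correct and complete. You split each initial segment into at most one dyadic block per level, apply Minkowski over the $K+1\le C\log(2+N)$ levels, and average over block-constant random signs to turn the signed hypothesis into $\sum_a\|S_{I_{k,a}}\|_2^2\le B^2$; this is the standard dyadic proof of this sign-hypothesis form of the Rademacher--Menshov inequality, whereas the paper gives no argument of its own and simply quotes the result from the cited reference.
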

We remark that Lemma \ref{lem:RM} plays a crucial role in treating a variety of maximally truncated operators (see \cite{KL2018, KL2020, IKM2025}).
The sum for $M=0$ is understood to be zero.
We verify \eqref{eq:max-rm-hypothesis} using our high-scale
estimate and construct the finite sequence to which
\eqref{eq:max-rm-conclusion} will be applied.

\begin{lemma}[Maximal tails at high positive scales]
	\label{lem:max-high}
	For every $s\ge s_*$, there exists a measurable set $F_s$ such that
	\begin{align}
		|F_s|
		&\le C2^{-s}\sum_{Q\in\mathcal Q}|Q|,
		\label{eq:max-exception}\\
		\|\one_{F_s^c}H_s\|_2
		&\le C(1+s)\|\Omega\|_1
		2^{-(\kappa_1/2-\eta_0)s}
		\left(
		\tau\sum_{Q\in\mathcal Q}\|b_Q\|_1
		\right)^{1/2}.
		\label{eq:max-high-bound}
	\end{align}
\end{lemma}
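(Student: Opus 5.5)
The plan is to deduce the maximal-tail bound from the signed version of Theorem~\ref{prop:high}, using Lemma~\ref{lem:RM}. The only real issue is reducing the infinite supremum over $J$ to a finite chain of length about $2^{Cs}$, which costs a factor $s$ rather than an unbounded logarithm.

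\emph{Step 1: the signed estimate.} Set $v_j=T_{j,\Omega_s}B_{j-s}$ for $j>\varrho s$. Inspecting the proof of Theorem~\ref{prop:high}, the correlation sum \eqref{eq:TT-full} is estimated after absolute values are placed on $G_{jk}$, $B_{k-s}$ and $B_{j-s}$. Hence, for every finite $\mathcal J\subset\{j>\varrho s\}$ and every choice of signs $\epsilon_j\in\{-1,1\}$,
\[
\norm{\sum_{j\in\mathcal J}\epsilon_jv_j}_2^2\le C\tau\norm\Omega_1^2\,2^{-(\kappa_1-2\eta_0)s}\sum_{j\in\mathcal J}\norm{B_{j-s}}_1 .
\]
The right-hand side is additive over $j$. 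Consequently, for any family of pairwise disjoint index intervals $I_1,I_2,\dots$ we get
\[
\sum_i\norm{\sum_{j\in I_i}v_j}_2^2\le C\tau\norm\Omega_1^2\,2^{-(\kappa_1-2\eta_0)s}\sum_Q\norm{b_Q}_1 .
\]
This is the input for a Rademacher--Menshov dyadic chaining, with no loss from summing over the blocks of a fixed level.

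\emph{Step 2: reduction to finitely many scales off $F_s$.} I will define $F_s$ as the union over bad cubes of a thin neighborhood at the scale where the annuli $\{|x-y|\simeq2^j\}$ from \emph{widely separated} levels $j$ could meet. The measure bound \eqref{eq:max-exception} should follow from the packing $\sum_Q|Q|\le\tau^{-1}\norm f_1$ and the disjointness of the cubes. The goal is that for $x\in F_s^c$, after discarding indices with $v_j(x)=0$, the surviving indices split into the consecutive dyadic tree of Lemma~\ref{lem:RM} with at most $Cs$ relevant levels. Put differently, the chain of partial tails $\sum_{j\ge J}v_j(x)$ only changes within windows of length $2^{Cs}$, up to terms that are controlled.

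\emph{Step 3: chaining.} I will apply Lemma~\ref{lem:RM} on each window of length $N\le2^{Cs}$, which gives a loss $\log(2+N)\le C(1+s)$. The windows are glued using the block-orthogonality of Step~1: partial tails across full windows form a sum over disjoint blocks, and each level of that chaining is controlled by the same additive bound. Taking square roots gives \eqref{eq:max-high-bound} with the factor $(1+s)\,2^{-(\kappa_1/2-\eta_0)s}$. Monotone convergence then passes to the full supremum, since $\sum_k\norm{B_k}_1<\infty$ gives $L^2$ convergence of the tails.

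The main obstacle is Step~2. Rademacher--Menshov loses $\log N$, and the scales $j>\varrho s$ are unbounded, so some geometric localization is needed to cap the effective number of chaining levels at $O(s)$. I will first try the route above: show that at a point outside $F_s$, only $O(s)$ dyadic levels of blocks carry nonzero increments. If that fails, the fallback is to give up uniformity of the window length and use the extra decay $2^{-\kappa_1 s}$ to absorb a $\log$ of a window whose length grows polynomially in $2^s$.
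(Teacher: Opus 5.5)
There is a genuine gap in Steps~2--3. Your heuristic is right: off a small set, only about $2^{Cs}$ terms are active at each point, and this gives the $\log\sim s$ loss. The problem is turning it into an admissible input for Lemma~\ref{lem:RM}. That lemma applies to a \emph{fixed} sequence of functions $v_1,\dots,v_N$. Discarding the indices $j$ with $v_j(x)=0$ depends on $x$, so it does not produce such a sequence, and $(v_j)_{j>\varrho s}$ itself has unbounded length.

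Windowing does not fix this. Inside a window of length $2^{Cs}$ you lose only $C(1+s)$, but the window sums $V_k=\sum_{j\in W_k}v_j$ must still be controlled through $\sup_k|\sum_{k'>k}V_{k'}|$. That is the same maximal-tail problem over infinitely many terms. Block orthogonality yields either a $\log$ of the number of windows (Rademacher--Menshov again) or the square root of the number of windows active at $x$ (Cauchy--Schwarz). Neither is $O(s)$.

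No admissible exceptional set rescues this. The enlarged cubes attached to the bad cubes satisfy a packing bound only with constant $\sim2^{ns}$. So one can arrange that every point of a set of measure comparable to $\sum_Q|Q|$ lies in the supports of order $2^{ns}$ of the $v_j$, at scales separated by arbitrarily large gaps. Such a set cannot sit inside $F_s$, whose measure must be $\le C2^{-s}\sum_Q|Q|$. Hence neither capping the number of chaining levels in the variable $j$ at $O(s)$ nor the fallback with polynomially growing windows can work.

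The missing idea is an $x$-independent reindexing by depth in a tree of dyadic cubes.
\begin{itemize}
\item Write $h_Q=T_{j(Q),\Omega_s}b_Q$. To each $Q$ with $j(Q)>\varrho s$ assign a cube $I(Q)$ from one of $3^n$ shifted dyadic grids, with $Q\cup\supp h_Q\subset I(Q)$ and $\ell(I(Q))=2^{j(Q)+c_0}$ exactly.
\item Let $F_s=\{x:\sum_t\sum_{I\in\mathscr I^t}\one_I(x)>N_s\}$ with $N_s=\lceil2^{(n+1)s}\rceil$. Since $\sum|I(Q)|\le C2^{ns}\sum|Q|$, Chebyshev gives $|F_s|\le C2^{-s}\sum_Q|Q|$.
\item Within one grid the selected cubes are nested or disjoint. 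So the depth $r_t(I)$, the number of selected cubes containing $I$, is a global quantity, and $\beta_r^t=\sum_{r_t(I)=r}h_I$ is a fixed sequence.
\item At $x\notin F_s$, the cubes of grid $t$ containing $x$ have depths $1,\dots,q$ with $q\le N_s$. The condition $j(Q)\ge J$, equivalently $\ell(I(Q))\ge2^{J+c_0}$, selects an initial segment of this chain.
\item Therefore $\one_{F_s^c}H_s\le\sum_t\max_{0\le M\le N_s}|\sum_{r\le M}\beta_r^t|$, and Rademacher--Menshov with $N=N_s$ costs only $\log(2+N_s)\le C(1+s)$.
\end{itemize}
Because $\beta_r^t$ mixes cubes of different scales, the signed input must allow one sign per cube, not one sign per scale as in your Step~1:
\[
\norm{\sum_{Q\in\mathcal F}\epsilon_Qh_Q}_2^2\le C\tau\norm\Omega_1^2\,2^{-(\kappa_1-2\eta_0)s}\sum_{Q\in\mathcal F}\norm{b_Q}_1 .
\]
This follows from the proof of Theorem~\ref{prop:high}, since $\epsilon_Qb_Q$ keeps the same support, mass bound and phase-adapted cancellation.
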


\begin{proof}
	We first assume that $\mathcal Q$ is finite and fix $s\ge s_*$.
	Set
	\[
	j(Q)=L(Q)+s,\qquad
	\mathcal Q(s)=\{Q\in\mathcal Q:j(Q)>\varrho s\},
	\]
	and write
	\[
	h_Q=T_{j(Q),\Omega_s}b_Q
	\qquad(Q\in\mathcal Q(s)).
	\]
	Then
	\[
	H_s(x)=\sup_{J\ge1}
	\left|
	\sum_{\substack{Q\in\mathcal Q(s)\\j(Q)\ge J}}
	h_Q(x)
	\right|.
	\]
	We shall use the signed estimate from the proof of
	Theorem~\ref{prop:high} and reorganize these scale tails into
	initial partial sums to which the Rademacher--Menshov inequality
	applies.
	
	For any subfamily $\mathcal F\subseteq\mathcal Q(s)$, replace
	$b_Q$ by $\epsilon_Qb_Q$ when $Q\in\mathcal F$ and by zero
	otherwise, where $|\epsilon_Q|\le1$ are constants.
	The resulting functions satisfy the same support and mass bounds
	and the same phase-adapted cancellation.
	Consequently, the calculation in
	\eqref{eq:TT-full}--\eqref{eq:high-schur} gives
	\begin{equation}\label{eq:max-signed}
		\left\|
		\sum_{Q\in\mathcal F}\epsilon_Qh_Q
		\right\|_2^2
		\le
		C\tau\|\Omega\|_1^2
		2^{-(\kappa_1-2\eta_0)s}
		\sum_{Q\in\mathcal F}\|b_Q\|_1.
	\end{equation}
	
	To apply this estimate to the maximal tails, we first use the
	support of $h_Q$ to organize the summands.
	The annular support of $T_{j(Q),\Omega_s}$ places
	$Q\cup\supp h_Q$ in a cube centered at $c_Q$ with sidelength
	$C_n2^{j(Q)}$.
	Using $3^n$ adjacent nested dyadic grids, choose a cube $I(Q)$
	for each $Q\in\mathcal Q(s)$ such that
	\begin{equation}\label{eq:max-cover}
		Q\cup\supp h_Q\subset I(Q),\qquad
		\ell(I(Q))=2^{j(Q)+c_0},\qquad
		|I(Q)|=2^{n(s+c_0)}|Q|,
	\end{equation}
	where the integer $c_0$ is fixed, independent of $Q$ and $s$.
	The exact sidelength in \eqref{eq:max-cover} preserves the order
	of the indices $j(Q)$, while the functions $h_Q$ remain unchanged.
	
	Let $\mathscr I^t$ be the collection of distinct cubes selected
	from the $t$th grid, and combine all terms associated with
	the same cube:
	\[
	h_I=
	\sum_{\substack{Q\in\mathcal Q(s)\\I(Q)=I}}h_Q,
	\qquad
	\supp h_I\subset I.
	\]
	Define
	\begin{equation}\label{eq:max-overlap-definition}
		d_s(x)=\sum_t\sum_{I\in\mathscr I^t}\one_I(x),\qquad
		N_s=\left\lceil2^{(n+1)s}\right\rceil,\qquad
		F_s=\{x:d_s(x)>N_s\}.
	\end{equation}
	By \eqref{eq:max-cover},
	\[
	\begin{aligned}
		\int d_s(x)\,dx
		&=\sum_t\sum_{I\in\mathscr I^t}|I|\\
		&\le\sum_{Q\in\mathcal Q(s)}|I(Q)|
		\le C2^{ns}\sum_{Q\in\mathcal Q}|Q|.
	\end{aligned}
	\]
	Chebyshev's inequality therefore yields
	\[
	\begin{aligned}
		|F_s|
		&\le N_s^{-1}\int d_s(x)\,dx\\
		&\le C2^{-(n+1)s}2^{ns}
		\sum_{Q\in\mathcal Q}|Q|\\
		&=C2^{-s}\sum_{Q\in\mathcal Q}|Q|.
	\end{aligned}
	\]
	This proves \eqref{eq:max-exception}.
	Moreover, $d_s(x)\le N_s$ for $x\notin F_s$, so at each such
	point at most $N_s$ selected cubes contribute.
	
	We now use the nesting of the cubes to express the scale tails
	as initial partial sums.
	For $I\in\mathscr I^t$, set
	\[
	r_t(I)=
	\operatorname{card}\{I'\in\mathscr I^t:I\subseteq I'\},
	\qquad
	\beta_r^t=
	\sum_{\substack{I\in\mathscr I^t\\r_t(I)=r}}h_I.
	\]
	Thus $r_t(I)$ counts $I$ and all its selected ancestors.
	For a fixed $x$, the cubes in $\mathscr I^t$ containing $x$
	form a nested chain
	\[
	I_1\supsetneq I_2\supsetneq\cdots\supsetneq I_q.
	\]
	Every selected ancestor of $I_a$ also contains $x$, and hence
	\[
	r_t(I_a)=a,\qquad
	\beta_a^t(x)=h_{I_a}(x)
	\qquad(1\le a\le q),
	\]
	while the other levels vanish at $x$.
	If $x\notin F_s$, then $q\le N_s$.
	
	By \eqref{eq:max-cover}, the condition $j(Q)\ge J$ is equivalent to
	\[
	\ell(I(Q))\ge2^{J+c_0}.
	\]
	It therefore selects an initial segment of the chain, starting
	with its largest cube.
	Applying the triangle inequality to the sum over the grids,
	we obtain
	\begin{equation}\label{eq:max-prefix}
		\one_{F_s^c}(x)H_s(x)
		\le
		\sum_t\max_{0\le M\le N_s}
		\left|\sum_{r=1}^M\beta_r^t(x)\right|.
	\end{equation}
	
	Fix a grid $t$ and signs
	$\epsilon_1,\ldots,\epsilon_{N_s}\in\{-1,1\}$.
	By the definition of $\beta_r^t$,
	\[
	\sum_{r=1}^{N_s}\epsilon_r\beta_r^t
	=
	\sum_{\substack{
			Q\in\mathcal Q(s),\ I(Q)\in\mathscr I^t\\
			r_t(I(Q))\le N_s}}
	\epsilon_{r_t(I(Q))}h_Q.
	\]
	The coefficients on the right-hand side are independent of $x$,
	so \eqref{eq:max-signed} applies to this subfamily.
	Taking square roots gives
	\begin{equation}\label{eq:max-rm-verification}
		\left\|
		\sum_{r=1}^{N_s}\epsilon_r\beta_r^t
		\right\|_2
		\le
		C\|\Omega\|_1
		2^{-(\kappa_1/2-\eta_0)s}
		\left(
		\tau\sum_{Q\in\mathcal Q}\|b_Q\|_1
		\right)^{1/2}.
	\end{equation}
	Thus \eqref{eq:max-rm-hypothesis} holds with
	$v_r=\beta_r^t$, $N=N_s$, and $B$ equal to the right-hand side
	of \eqref{eq:max-rm-verification}.
	The Rademacher--Menshov inequality
	\eqref{eq:max-rm-conclusion} now yields
	\begin{align}
		&\left\|
		\max_{0\le M\le N_s}
		\left|\sum_{r=1}^M\beta_r^t\right|
		\right\|_2
		\notag\\
		&\qquad\le
		C\log(2+N_s)\,\|\Omega\|_1
		2^{-(\kappa_1/2-\eta_0)s}
		\left(
		\tau\sum_{Q\in\mathcal Q}\|b_Q\|_1
		\right)^{1/2}.
		\label{eq:max-rm-applied}
	\end{align}
	Since
	\[
	\log(2+N_s)\le C(1+s),
	\]
	taking the $L^2$ norm in \eqref{eq:max-prefix} and summing over
	the finitely many grids proves \eqref{eq:max-high-bound}.
	The factor $1+s$ is precisely the logarithmic loss in the
	Rademacher--Menshov inequality.
	
	For countable $\mathcal Q$, fix all cubes $I(Q)$ and define $F_s$
	using the full family. The same argument gives
	\eqref{eq:max-exception}, and the finite-family $L^2$ estimate
	holds on $F_s^c$, since $F_s$ contains every finite-family
	exceptional set.
	Let finite subfamilies increase to $\mathcal Q$.
	By \eqref{eq:max-signed} and $\sum_Q\|b_Q\|_1<\infty$,
	each fixed-$J$ tail converges in $L^2$.
	Passing to a common almost-everywhere convergent subsequence
	for all $J\ge1$, Fatou's lemma yields
	\eqref{eq:max-high-bound} for the full family.
\end{proof}

\medskip
\noindent
\textit{Completion of the bad-function estimate.}

Set
\[
F=\bigcup_{s\ge s_*}F_s.
\]
By \eqref{eq:CZ}, \eqref{eq:Esize},
and \eqref{eq:max-exception},
\begin{equation}\label{eq:max-EF}
	|E\cup F|
	\le |E|
	+C\sum_{s\ge s_*}2^{-s}\sum_Q|Q|
	\le CA\lambda^{-1}\|f\|_1.
\end{equation}

The estimate \eqref{eq:max-low} gives
\begin{equation}\label{eq:max-low-sum}
	\begin{aligned}
		\left\|\sum_{s\ge s_*}L_s\right\|_2
		&\le
		C\|\Omega\|_1\sqrt{\tau\|f\|_1}
		\sum_{s\ge s_*}\sqrt{1+s}\,
		2^{-(\kappa_3/2-\eta_0)s}\\
		&\le C\|\Omega\|_1\sqrt{\tau\|f\|_1}.
	\end{aligned}
\end{equation}
Since $F^c\subseteq F_s^c$ for every $s$,
the high-scale estimate gives
\begin{align}
	\left\|
	\one_{F^c}\sum_{s\ge s_*}H_s
	\right\|_2
	&\le
	\sum_{s\ge s_*}\|\one_{F_s^c}H_s\|_2
	\notag\\
	&\le
	C\|\Omega\|_1\sqrt{\tau\|f\|_1}
	\sum_{s\ge s_*}
	(1+s)2^{-(\kappa_1/2-\eta_0)s}
	\notag\\
	&\le C\|\Omega\|_1\sqrt{\tau\|f\|_1}.
	\label{eq:max-high-sum}
\end{align}
Here we used $\sum_Q\|b_Q\|_1\le2\|f\|_1$.
Both numerical series converge by the choice of $\eta_0$
in \eqref{eq:eta}.
Using $\tau=\lambda/A$ and $\|\Omega\|_1\le CA$,
we conclude that
\begin{equation}\label{eq:max-L2-total}
	\left\|\sum_{s\ge s_*}L_s\right\|_2
	+
	\left\|\one_{F^c}\sum_{s\ge s_*}H_s\right\|_2
	\le C\sqrt{\lambda A\|f\|_1}.
\end{equation}

We now apply Chebyshev's inequality to the $L^1$ remainder $V$
and the two $L^2$ main terms separately.
By \eqref{eq:max-bad-split}, \eqref{eq:max-V},
\eqref{eq:max-EF}, and \eqref{eq:max-L2-total},
\begin{align}
	|\{x:\mathcal M_P^+b(x)>\lambda/2\}|
	&\le
	|E\cup F|+\frac{C}{\lambda}\|V\|_1
	\notag\\
	&\quad+
	\frac{C}{\lambda^2}
	\left(
	\left\|\sum_{s\ge s_*}L_s\right\|_2^2
	+
	\left\|\one_{F^c}\sum_{s\ge s_*}H_s\right\|_2^2
	\right)
	\notag\\
	&\le CA\lambda^{-1}\|f\|_1.
	\label{eq:max-bad-weak}
\end{align}

Together with \eqref{eq:max-good} and
\[
\mathcal M_P^+f
\le\mathcal M_P^+g+\mathcal M_P^+b,
\]
this proves
\begin{equation}\label{eq:max-tail-weak}
	\lambda|\{x:\mathcal M_P^+f(x)>\lambda\}|
	\le CA\|f\|_1.
\end{equation}
The reduction \eqref{eq:max-dyadic} and the weak type $(1,1)$
estimate for $M_\Omega$ therefore yield
\begin{equation}\label{eq:max-infty-weak}
	\lambda|\{x:T_{P,\infty}^*f(x)>\lambda\}|
	\le CA\|f\|_1.
\end{equation}
Combining \eqref{eq:max-infty-weak},
\eqref{eq:max-local-weak}, and \eqref{eq:max-fixed-weak}
with the three-part decomposition \eqref{eq:max-three},
we obtain \eqref{eq:max-target}. This completes the proof of Theorem \ref{thm1}.

\makeatletter
\let\@tocwrite\@gobbletwo
\makeatother
\section*{Statements and Declarations}

\noindent\textbf{Statement on the use of AI.}
We have worked on this paper for several years. The mathematical ideas and results in the main body of this paper are due entirely to the
authors, who developed the arguments and wrote the proofs.
We used  GPT-5.6 Sol to help check the calculations underlying our
approach to the two-center rough oscillatory estimates and
to refine the wording of the manuscript.  This was the only mathematical input from AI that we later used. 
We take full responsibility for the mathematical content of the paper.
\smallskip

\noindent\textbf{Ethical Approval}

The declaration for ethical approval is not applicable.

\noindent\textbf{Competing interests}

The authors declare no competing interests.

\noindent\textbf{Availability of data and materials}

Data sharing is not applicable to this article as no datasets were generated or analysed during the current study.

\begingroup
\makeatletter
\let\@tocwrite\@gobbletwo
\makeatother

\endgroup

\end{document}